\documentclass[a4paper,12pt]{amsart}
\usepackage{amssymb,amsmath,amsfonts,a4wide, color, mathtools}
\usepackage{url}
\usepackage{relsize}

\usepackage{hyperref}
\hypersetup{
    colorlinks=true, 
    linktoc=all,     
   linkcolor=blue,  
}
%

\newtheorem{pro}{Proposition}[section]
\newtheorem{teo}[pro]{Theorem}
\newtheorem{lema}[pro]{Lemma}
\newtheorem{coro}[pro]{Corollary}
\theoremstyle{definition}
\newtheorem{defi}[pro]{Definition}
\newtheorem{rema}[pro]{Remark}
\newtheorem{ex}[pro]{Example}

\numberwithin{equation}{section}


%

\def \V{\mathcal{V}}
\def \H{\mathcal{H}}

\def \F{\mathcal{F}}

\def \r{\mathfrak{R}}
\def \D{\mathcal{D}}

\newcommand{\w}{\wedge}
%
\def \del{\partial}

\DeclareMathOperator{\bdel}{\overline{\partial}}
\DeclareMathOperator{\s}{\textsl{s}}
\DeclareMathOperator{\li}{\mathcal{L}}

\newcommand{\grad}{\textsl{grad\,}}

\newcommand{\bb}{\mathbb}
\DeclareMathOperator{\Ric}{\textsl{Ric}\,}
\def \z{\zeta}


%
\def\Ric{\mathrm{Ric}}
\def\s{\mathrm{scal}}
\def\HK{\mbox{HK}}
\def\grad{\mathrm{grad}}

\renewcommand{\ln}{\textsl{ln}\,}

\newcommand{\lie}[1]{\mathfrak{#1}}

\DeclareMathOperator{\aut}{\lie{aut}}

\DeclareMathOperator{\di}{d}
\DeclareMathOperator{\Ker}{Ker}

\newcommand{\ol}{\overline}
\newcommand{\ba}{\begin{array}}\newcommand{\ea}{\end{array}}

\renewcommand{\&}{{\footnotesize \&}}

\def\tg{\widetilde{\gamma}}
\def\hg{\widehat{\gamma}}
 
%
%
\def\sideremark#1{\ifvmode\leavevmode\fi\vadjust{\vbox to0pt{\vss
 \hbox to 0pt{\hskip\hsize\hskip1em
 \vbox{\hsize2.5cm\tiny\raggedright\pretolerance10000
 \noindent #1\hfill}\hss}\vbox to8pt{\vfil}\vss}}}%

                                                   %

\begin{document} 

\bibliographystyle{plain}

\title[Conformal Killing forms in K\"ahler geometry]{Conformal Killing forms in K\"ahler geometry}
\subjclass[2010]{53B35,53C55,53C12}
\keywords{Conformal and Hermitian Killing form, conformal foliation, Calabi-type metric}
\author[P.-A. Nagy]{Paul-Andi Nagy} 
\address[Paul-Andi Nagy]{Institut f\"ur Geometrie und Topologie, Fachbereich Mathematik, Universit\"at Stuttgart, Pfaffenwaldring 57, 70569 Stuttgart, Germany}
\email{Paul-Andi.Nagy@mathematik.uni-stuttgart.de}
\author[U. Semmelmann]{Uwe Semmelmann}
\address[Uwe Semmelmann]{Institut f\"ur Geometrie und Topologie, Fachbereich Mathematik, Universit\"at Stuttgart, Pfaffenwaldring 57, 70569 Stuttgart, Germany}
\email{Uwe.Semmelmann@mathematik.uni-stuttgart.de}
\date{\today} 
\begin{abstract}
For  K\"ahler manifolds we explicitly determine the solution to the conformal Killing form equation in middle degree. In particular,
we complete the classification of conformal Killing forms on compact K\"ahler manifolds. We give the first examples of conformal Killing forms on K\"ahler manifolds not coming from Hamiltonian $2$-forms. These are supported by Calabi type manifolds over a K\"ahler Einstein base. In this set up we 
also give structure results and examples for the closely related class of Hermitian Killing forms.
\end{abstract}
\maketitle

\section{Introduction} \label{intro-s}
Let $(M^{2m},g,J)$ be a connected K\"ahler manifold of complex dimension $m \geq 3$. The aim of this paper is to study the overdetermined system of first order equations on a pair 
$(\varphi,\tau)$ of differential forms given by 
\begin{equation}\label{special-i}
\nabla_X \varphi=X^{1,0} \wedge \tau \;+\; \frac{i}{2} \, \omega \wedge (X \lrcorner \, \tau)
\end{equation}
for all $X$ in $TM$ where $\varphi \in \Lambda^{1,m-1}_0M$ is a primitive form of complex type $(1,m-1)$. For such pairs, the form 
$\tau$ in  $\Lambda^{0,m-1}M$ is determined from $\tau=-\frac{2}{m+1}\,\del^*\varphi$. For explanation on notation and the various conventions see the body of the paper.
We will study equation \eqref{special-i} in full generality, in the sense that the metric $g$ is not assumed to be complete nor $M$ to be compact. 
\subsection{Motivation}
Equation \eqref{special-i} naturally appears when studying conformal Killing forms on K\"ahler manifolds, see \cite{AUT}, where its solutions are referred to as special $m$-forms. Indeed, solutions of
 \eqref{special-i} are precisely primitive conformal Killing forms in $\Lambda^{1,m-1}M$. 
 
Conformal Killing $p$-forms (or twistor forms) are differential $p$-forms in the kernel of a naturally defined conformally invariant 1st order elliptic operator, the so-called Penrose or twistor operator  \cite{Branson, Yano}. 
They generalise conformal vector fields for $p=1$.  Originally  conformal Killing forms,  are motivated from physics. The interesting subclass of Killing forms, i.e. co-closed conformal Killing forms,  were used in relativity theory,  for integrating the equations of motion \cite{Penrose1}. It turns out that Killing forms are related to several interesting geometric structures, e.g.  Sasaki, nearly K\"ahler or nearly parallel $G_2$ metrics \cite{uwe}. At the same time it could be shown  \cite{BMS,AUT,AU2, uwe2}  that Killing $p$-forms, $p\geq 2$, on compact manifolds with special holonomy, in particular compact K\"ahler manifolds, must be parallel. Later on conformal Killing forms became important in the study of symmetries of the massless Dirac equation \cite{Benn} and also in mathematics  \cite{AD,A19}.  Many interesting non-parallel examples can be found, e.g. on compact K\"ahler manifolds as the complex projective spaces. 

In  \cite{AUT} conformal Killing forms on a compact K\"ahler manifold of real dimension $2m$ with $m\geq 3$, have been classified up to degrees $2$ and $m$. 
In degree two conformal Killing forms turn out to be in $1:1$ correspondence with Hamiltonian $2$-forms. The latter have been intensively studied in \cite{ACG} and the follow-up papers. In particular one has local and global classification results, as well as many interesting examples which underpin the geometry of specific Hamiltonian torus actions.
In degree $m$ the classification of conformal Killing forms remains an open problem. 
By results in \cite{AUT} it amounts to (see section \ref{Cm}) determining, when 
forms $\varphi$ in $\Lambda^{1,m-1}_0M$ solving the equation \eqref{special-i} exist. 
In our article we will solve this problem and complete the classification of conformal Killing forms on 
compact K\"ahler manifolds. Moreover, we will produce first examples not coming from Hamiltonian $2$-forms.

The conformal Killing  equation and in particular   \eqref{special-i} is an overdetermined system of PDEs of finite type; its prolongation has been considered in \cite{uwe}. This is indication that an explicit procedure
for constructing solutions may exist. We build  it in several stages  relying on the key observation that
 $\tau$ is a Hermitian Killing form in the sense that 
\begin{equation} \label{HKD}
\nabla^{01}\tau=\frac{1}{m}\bdel \tau,
\end{equation}
which aditionally satisfies $\di\!^{\star} \tau=0$.

On the other hand \eqref{HKD} is a natural generalisation of the Killing equation in K\"ahler geometry 
for the operator $\nabla^{01}-\frac{1}{m} \bdel $  is precisely the projection of $\nabla-\frac{1}{m}\di$ onto 
$T^{01}M \otimes \Lambda^{0,m-1}M$. Hermitian Killing forms have already been considered in \cite{handbk} in order to describe the structure of the torsion of $\mathcal{G}_1$-manifolds and are of independent interest.

First order properties of the space $\HK^{0,m-1}(M,g)$ of Hermitian Killing forms of type $(0,m-1)$, including examples, are described in section \ref{defn}, where the main observation is that the non-vanishing of $\HK^{0,m-1}(M,g) \cap \ker \di^{\star}$ forces the Ricci tensor of $g$ to have at most two eigenfunctions over $M$. In addition on 
the open part of $M$ where $g$ is non-Einstein one eigenfunction has multiplicity $2$. This eigenvalue type for the Ricci tensor has been 
considered, for constant eigenfunctions, in \cite{ADM}; it also appears as an integrability condition for several classes of PDE`s of geometric origin such as K\"ahler metrics conformal to Einstein \cite{Der-M} and Hamiltonian forms $2$-forms of rank $1$ \cite{ACG}.

\subsection{Main results}
Our first main result below consists in giving the complete 
local structure of solutions $(\varphi,\tau)$ to \eqref{special-i} together with that of the K\"ahler structure $(g,J)$.
\begin{teo} \label{main1}
Let $(M^{2m},g,J), m \geq 3$ be a connected K\"ahler manifold admitting a solution $(\varphi,\tau)$ to \eqref{special-i} such that 
$\bdel \tau$ is not identically zero. Then either $\varphi$ is parallel 
w.r.t. the Levi-Civita connection of $g$ or one of the following situations occur
\begin{itemize}
\item[(i)] the metric $g$ is Ricci flat and equipped with a cone vector field $V_M$. Up to adding parallel forms to $\varphi$ we have 
\begin{equation*}
\varphi=\frac{1}{4}V_M^{1,0} \w (V_M \lrcorner \, \Psi_M), \tau=V_M \lrcorner \, \Psi_M
\end{equation*}
where $\Psi_M$ is a normalised complex volume form
\medskip
\item[(ii)] $(g,J)$ is locally of Calabi type, with local K\"ahler Einstein base $N$, moment map $z$ and momentum profile $\bb{X}(z)=z(C_1z^m+\frac{2k}{m})$
with $C_1,k \in \bb{R}$. Then 
\begin{equation*}
\varphi=\frac{z}{\bb{X}(z)}\del z \w \tau, \ \tau=z^m \widehat{id}. 
\end{equation*}
\end{itemize} 
\end{teo}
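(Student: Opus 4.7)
The plan is to leverage the reduction observed in the introduction: if $(\varphi,\tau)$ solves \eqref{special-i}, then $\tau \in \HK^{0,m-1}(M,g) \cap \ker \di^{\star}$, so that by the structure theory developed in section \ref{defn} the Ricci tensor of $g$ has at most two eigenfunctions, with the non-Einstein locus carrying a multiplicity-$2$ eigenvalue. The statement therefore splits naturally into two geometric regimes. I would first check that the case $\bdel \tau \equiv 0$ corresponds precisely to $\varphi$ being parallel: indeed then $\tau$ becomes a closed and co-closed Hermitian Killing form, and feeding $\nabla^{01}\tau=0$ back into \eqref{special-i} together with $\di \varphi=0$ (consequence of the twistor equation for $\varphi$) forces $\nabla \varphi=0$. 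This disposes of the trivial alternative and lets me focus the rest of the argument on the open set $U=\{p \in M : (\bdel \tau)_p \neq 0\}$.

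On $U$, I would separate according to whether $g$ is Kähler--Einstein or not. In the Einstein case, scalar curvature must vanish: the trace identities for a Hermitian Killing form in middle anti-holomorphic degree couple $\di \tau$ to $\s \cdot \tau$, and the assumption $\bdel \tau \not\equiv 0$ rules out a non-zero Einstein constant. Hence a local parallel complex volume form $\Psi_M$ exists; contracting the isomorphism $\Lambda^{0,m-1}M \cong T^{1,0}M$ induced by $\Psi_M$ converts $\tau$ into a complex vector field $V_M$, and the equation \eqref{HKD} translates into the statement that $V_M$ is a cone vector field (i.e. $\nabla V_M^{1,0}=\tfrac12 \mathrm{id}^{1,0}$-type). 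Plugging $\tau = V_M \lrcorner \Psi_M$ into \eqref{special-i} and reading off the $(1,0)$ and $(0,m-1)$ parts yields the closed form $\varphi=\tfrac14 V_M^{1,0}\w (V_M \lrcorner \Psi_M)$ modulo parallel forms, producing case~(i).

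In the non-Einstein case I would use the Ricci-eigenvalue dichotomy to produce a real holomorphic vector field $K$ on $U$ whose flow preserves $(g,J)$ and whose square norm $z$ is a Kähler potential for the $2$-eigenspace distribution. The eigenspace $\langle K, JK\rangle$ and its orthogonal complement integrate to a foliation by complex curves and by a locally Kähler--Einstein base $N$: these are the defining features of a Calabi-type metric. The momentum profile $\bb{X}(z)$ is extracted by writing the restriction of the Ricci equation and the norm equation $|\tau|^2 \sim z^m$ as an ODE in $z$; the integrability condition, combined with the Kähler--Einstein constant $k$ of the base, forces $\bb{X}(z)=z(C_1 z^m+\tfrac{2k}{m})$. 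Once the Calabi description is in place, $\tau=z^m \widehat{\mathrm{id}}$ follows by identifying the direction of $\tau$ inside $\Lambda^{0,m-1}N$ pulled back to $M$ and normalising with the $K$-equivariance; \eqref{special-i} then recovers $\varphi=\tfrac{z}{\bb{X}(z)}\del z \w \tau$.

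The main obstacle is the non-Einstein case: converting the first order information encoded in $\tau$ into the full local Calabi structure with the exact momentum profile. Concretely, one must (a) promote the multiplicity-$2$ Ricci eigendistribution to a $J$-invariant parallel distribution under a connection with identifiable torsion, (b) verify that the orthogonal leaves descend to a Kähler--Einstein quotient with controlled curvature $k$, and (c) integrate the coupled system for $\bb{X}(z)$ to obtain the two-parameter family $(C_1,k)$. Once this geometric reduction is accomplished, the explicit formulas for $\varphi$ and $\tau$ reduce to algebraic manipulations in the Calabi coordinates.
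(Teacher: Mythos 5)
Your overall architecture coincides with the paper's: split into the Einstein and non-Einstein regimes, obtain a cone structure in the former and a Calabi-type reduction with an ODE for the momentum profile in the latter. However, there is a genuine gap exactly at the point you flag as ``the main obstacle'': knowing that the Ricci tensor has two eigenfunctions with a multiplicity-$2$ eigenvalue on $\V=\ker(\tau)$ is \emph{not} enough to produce a holomorphic Killing field $K$ or a Calabi-type structure --- there are many K\"ahler metrics with this Ricci eigenvalue pattern that are not of Calabi type. What actually drives the reduction in the paper is a set of integrability conditions extracted from the conformal Killing equation itself, not from the Ricci spectrum: one first proves $[\rho,\varphi]=i\,\del\tau$ and $\Ric(\del\tau)=\frac{\s}{2}\del\tau$ (consequences of Proposition \ref{der12} and the obstruction $\di\,\r\,\di\varphi=0$), which force the components $\varphi_1,(\del\tau)_1$ and then, via Lemma \ref{sscal}, $\varphi_3,(\del\tau)_3$ to vanish and $\grad(\s)$ to lie in $\V$. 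Only with $(\del\tau)_1=(\del\tau)_3=0$ and $\di\lambda_k\in\Lambda^1\V$ in hand can one verify $\di\omega^{\V}=-\theta\w\omega^{\H}$, i.e.\ that $\V$ is a totally geodesic, holomorphic, \emph{conformal} foliation, and then invoke the classification of homothetic complex foliations to obtain the symmetry $K$, the moment map $z$ and the momentum profile. Your proposal never supplies a mechanism for steps (a)--(c); without the curvature identities above they do not follow from the Hermitian Killing equation for $\tau$ alone.

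A secondary error: your opening claim that $\bdel\tau\equiv 0$ forces $\nabla\varphi=0$ is false in general. The paper exhibits explicit non-parallel solutions with $\bdel\tau=0$ on products $\bb{C}\times N$ with $N$ Ricci-flat (Proposition \ref{pro-E}); your argument fails because $\di\varphi=0$ is \emph{not} a consequence of the twistor equation --- one has $\del\varphi=0$ but $\bdel\varphi=\frac{i(m+1)}{2}L_{\omega}\tau\neq 0$ whenever $\tau\neq 0$. The implication you want only holds on compact manifolds (via integration, as in Lemma \ref{tau-holm}). This does not affect the theorem, whose hypothesis already excludes $\bdel\tau\equiv 0$, but the reasoning as written is incorrect. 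Finally, note that the explicit form $\tau=z^m\widehat{id}$ is not a matter of ``normalising with the $K$-equivariance'': it requires the lifting calculus of section \ref{lifts} and the co-closedness of $\tau$, which forces the base to be K\"ahler--Einstein with $K_N\cong L^{-k}$ and pins down $\tau$ up to a constant.
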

The K\"ahler structures occuring in (i) above are locally metric cones over Sasaki-Einstein manifolds respectively 
the conification \cite{MaRo} of a $2(m-1)$-dimensional K\"ahler-Einstein manifold. See section \ref{Einstein} for details.
The definition and main properties of Calabi-type metrics as in part (ii) of Theorem \ref{main1} are explained in 
sections \ref{CAL} and \ref{defn-cal} of the paper. The form $\widehat{id}$ is a canonically defined $(0,m-1)$-form on $M$ build with the aid of the Einstein condition on $N$ and its polarisation $L$(see section \ref{lifts}).
Cases (i) and (ii) above do not overlap since in the latter the metric is never Einstein. The geometry of K\"ahler structures $(g,J)$ carrying a solution $(\varphi,\tau)$ to \eqref{special-i} such that $\bdel \tau=0$ has a different flavour and cannot be treated with the current techniques. However we display a non-trivial class of such solutions in the Ricci flat case (see section \ref{Einstein}). Moreover when $M$ is compact it is a simple observation that  $\bdel \tau=0$ forces $\varphi$ to be parallel.

Based on Theorem \ref{main1} we complete the classification of conformal Killing forms on compact K\"ahler manifolds. 
\begin{teo} \label{main2}
Let $(M^{2m},g,J), m \geq 3$ be a compact K\"ahler manifold. 
Any primitive conformal Killing form in $\Lambda^{1,m-1}M$ is parallel w.r.t. 
to the Levi-Civita connection of $g$. Moreover, up to parallel forms, the space of conformal Killing forms with degree $\neq 1,2m-1$ is  
$$ \{L_{\omega}^{k-1}(\psi-\frac{1}{2k}\langle \psi, \omega\rangle \omega) : \psi \ \mbox{is Hamiltonian}, \ 1 \leq k \leq m-1 \}.
$$
where $L_{\omega}$ denotes exterior multiplication with the K\"ahler form $\omega$. 
\end{teo}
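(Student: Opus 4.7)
The second assertion of the theorem should follow from the first by combining with the classification work of \cite{AUT}: there, conformal Killing forms on a compact K\"ahler manifold are decomposed according to complex bidegree and reduced to primitive pieces, the degree-$2$ case is put in bijection with Hamiltonian $2$-forms, and middle-degree forms are split into $(m,0)+(0,m)$ components, primitive $(m-1,1)+(1,m-1)$ components, and multiples of powers of the K\"ahler form. Once the primitive middle-degree pieces are shown to be parallel, the stated formula involving $L_{\omega}^{k-1}$ and Hamiltonian $2$-forms is essentially an assembly of these reductions; I would begin by recalling the necessary statements from \cite{AUT} in Section \ref{Cm}, then concentrate the remainder of the argument on the first assertion.

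To prove that a primitive conformal Killing form $\varphi\in\Lambda^{1,m-1}M$ on a compact $M$ is parallel, I would package $\varphi$ together with $\tau=-\tfrac{2}{m+1}\del^{\star}\varphi$ into a solution of \eqref{special-i} and then apply Theorem \ref{main1} locally. The argument splits according to whether $\bdel\tau$ is identically zero. When $\bdel\tau\equiv 0$, the defining Hermitian Killing equation \eqref{HKD} reduces to $\nabla^{01}\tau=0$; combined with the coclosedness $\di^{\star}\tau=0$ and compactness, a short integration by parts against \eqref{special-i} shows that the right-hand side of \eqref{special-i} vanishes in the $L^{2}$ sense, giving $\nabla\varphi=0$. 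This is the easy dichotomy announced immediately after Theorem \ref{main1}.

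When $\bdel\tau$ is not identically zero, Theorem \ref{main1} places $(M,g,J)$ locally into case (i) or case (ii). Case (i) is ruled out by compactness: the cone vector field $V_M$ is a conformal gradient whose length grows linearly away from an apex, so the local model is a genuine metric cone over a Sasaki-Einstein manifold or a conification of a K\"ahler-Einstein manifold, and neither admits a compact complete representative. Case (ii) is the delicate one. On a compact Calabi-type manifold the moment map $z$ attains extremal values $z_{\pm}$ at which the momentum profile $\bb{X}$ must vanish with a simple zero and unit derivative coefficient for the metric to close up smoothly. Inspecting the roots of $\bb{X}(z)=z(C_{1}z^{m}+\tfrac{2k}{m})$ against the required boundary behaviour of the Calabi ansatz and of $\varphi=\tfrac{z}{\bb{X}(z)}\del z\w\tau$ with $\tau=z^{m}\widehat{id}$, I would show that either $\varphi$ fails to extend smoothly across a critical level of $z$, contradicting smoothness on $M$, or the parameters collapse $(\varphi,\tau)$ to a parallel pair.

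The main obstacle will be the global analysis of case (ii). It requires a clean discussion of how the Calabi ansatz compactifies over a compact K\"ahler-Einstein base $N$ and a matching analysis of the singular behaviour of $\varphi$ and $\tau$ at the zeros of $\bb{X}$. I expect this to reduce to a short case analysis of admissible pairs $(C_{1},k)$ and of the base geometry, together with a verification that no admissible choice produces a non-parallel global primitive conformal Killing form.
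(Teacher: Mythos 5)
Your reduction of the second assertion to the first via \cite{AUT}, and your treatment of the case $\bdel\tau\equiv 0$ (which matches Lemma \ref{tau-holm}) and of the Einstein case, are essentially fine. The problem is the non-Einstein case with $\bdel\tau\not\equiv 0$, which is the heart of the theorem, and there your plan has a genuine gap. Theorem \ref{main1}(ii) and Proposition \ref{loc-descr} describe $(g,J,\varphi,\tau)$ only \emph{locally} on the set $M_2$, a dense open subset of the non-Einstein locus $M_1$, and only after choosing a local primitive $f$ of the Lee form $\theta$ (so $z=e^{f}$ and the constants $C_1,k,C_2$ are a priori defined only on connected pieces of $M_2$). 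To run your proposed compactification analysis -- boundary conditions for $\bb{X}$ at the extremal levels of $z$, smooth extension of $\varphi=\frac{z}{\bb{X}(z)}\del z\w\tau$ across the zeros of $\bb{X}$ -- you would first have to promote this to a \emph{global} Calabi fibration structure on the compact $M$: a globally defined moment map $z$, a global momentum profile, control of the locus where $\tau$ vanishes or $g$ becomes Einstein, and matching of the integration constants across components. None of this is addressed, and it is precisely the hard part; "a short case analysis of admissible pairs $(C_1,k)$" does not exist until the global fibration is established. Your dismissal of case (i) by "no compact complete representative" has the same flavour of asserting a global conclusion from a local normal form without the intermediate globalisation step (here the paper's Remark \ref{E-rmk2} gives the actual argument: $\del\tau=0$ forces $\Delta\varphi=0$, and integration over compact $M$ yields $\di^{\star}\varphi=0$, hence $\tau=0$).

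The paper avoids the globalisation problem entirely by a different mechanism, which is worth internalising. It shows (Proposition \ref{glob-1}) that the scalar
$p=\vert\tau\vert^2+\frac{\s\,\vert\varphi\vert^2}{m(2m-1)}$
is globally defined on $M$, is a holomorphic Killing potential, and satisfies the second order identity $\vert\di p\vert^2+\frac{1}{2m}p\Delta p=\frac{k_1}{m}p$ with $k_1\geq 0$; the constant $k_1$ is fixed over all of $M$ by the strong unique continuation property of Hermitian Killing forms (Corollary \ref{zero}), which is exactly the tool that replaces your patching of local constants. The contradiction on compact $M$ then comes from the maximum principle applied to this identity when $K_1=-J\grad p\neq 0$, and from a Stokes' theorem computation of $((\di J\di)\vert\varphi\vert^2)^{\w m}$ when $K_1=0$ -- a case split your outline does not anticipate. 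If you want to salvage your route, you would need to supply the global structure theory of compact Calabi-type manifolds and the extension analysis at the critical set of $z$; as written, the proof is incomplete at its decisive step.
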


In dimension $4$ it is an open problem, even for compact K\"ahler manifolds, to fully classify conformal Killing $2$-forms. Partial results, including many examples not coming from Hamiltonian $2$-forms as well as the relation with ambi-K\"ahler geometry can be found in \cite{Mas,AUT,AP}.
\subsection{Outline of proofs}

The key ingredient in proving Theorem \ref{main1} is to derive the full set of integrability conditions for \eqref{special-i}. To that extent we use foliation theory as follows. The distribution 
$\V:=\ker(\tau)$ has complex rank $1$ off the zero set of $\tau$. Denoting $\H:=\V^{\perp}$, the integrability conditions mentioned above amount to showing that the components 
$(\del \tau)_1$ respectively $(\del \tau)_3$ of $\del \tau$ on the subbundles $(\omega^{\V}-\omega^{\H}) \w \Lambda^{0,m-2}\H \subseteq \Lambda^{1,m-1}_0M$ respectively $\Lambda^{0,1}\V \w \Lambda^{1,m-2}_0\H \subseteq \Lambda^{1,m-1}_0M$ vanish off the set where $g$ is Einstein. Geometrically these mean that $\V$ defines a totally geodesic, holomorphic and conformal foliation w.r.t. $(g,J)$. By results in \cite{Chiossi-Nagy}(see also \cite{NaOr} for instances when $\V$ has arbitrary rank) this entails that $\V$ induces a canonically defined symmetry $K$ and at the same time guarantees that $(g,J)$ is locally of Calabi type. During this process we also show, just as in the case of $\del \tau$, that the components $\varphi_1$ and $\varphi_3$ of $\varphi$ vanish, which makes that equation \eqref{special-i} essentially reduces to a first order ODE for the moment profile 
of $K$. Solving this ODE 
reveals that $K$ has momentum profile as indicated in Theorem \ref{main1}, (ii) and also provides the explicit expression for $\varphi$ therein.
On open sets where $g$ is Einstein $-J\grad \vert \tau \vert^2$ is a cone vector field and Theorem \ref{main1} is proved by an easy direct argument.

To prove Theorem \ref{main2} we show that, {\it{a posteriori}},
\begin{equation} \label{symh}
p:=\frac{\s_g}{m(2m-1)} \vert \varphi \vert^2+\vert \tau \vert^2
\end{equation}
is a real holomorphy potential on $M$ in the sense that $-J\grad p$ is a holomorphic Killing vector field. 
To see this we use the local symmetry $K$ constructed above together with the unique continuation property for $\tau$ which allows fixing the constants of integration over $M$ and thus ensures that $p$ is globally defined. In addition, we observe that 
$p$ satisfies a second order non-linear PDE, see \eqref{max}; this leads to the claimed rigidity result by using the maximum principle for the Laplace operator on functions.

\subsection{Further results}
In section \ref{ex} we examine global aspects pertaining to the theory of Hermitian Killing forms. Starting from a polarised K\"ahler manifold $(N^{2(m-1)},L)$ we consider the K\"ahler manifold $M^{2m}=L$ equipped with a Calabi type metric and calculate explicitely the space $\HK^{0,m-1}(M,g)$. The following theorem shows that there are large classes of K\"ahler metrics, including complete ones, such that the space of Hermitian Killing has a rich structure.
\begin{teo} \label{main3}
We have an isomorphism 
\begin{equation*}
\bigoplus_{k \in \bb{Z}}H^0(N,K_N \otimes L^k) \to \{\tau \in \HK^{0,m-1}(M,g): \tau_{\vert \V}=0\}.
\end{equation*}
\end{teo}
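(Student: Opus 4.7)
The vanishing condition $\tau_{\vert \V}=0$ means that $\tau$ is a section of $\Lambda^{0,m-1}\H^{*}$, and since $\H$ projects as a fiberwise complex linear isomorphism onto $TN$, this is naturally a section of $\pi^{*}\overline{K_N}$. The fiberwise $S^1$-action on $L$ preserves the Calabi-type data and the operator $\nabla^{01}-\tfrac{1}{m}\bdel$, so Fourier decomposing along this action yields
\begin{equation*}
\tau \;=\; \sum_{k\in\bb{Z}} h_k(t)\,\widetilde{\bar\sigma}_k,
\end{equation*}
where $t$ is the moment coordinate associated with the $S^1$-action, $\sigma_k\in\Gamma(N,K_N\otimes L^k)$, $h_k$ is a scalar profile function of $t$, and $\widetilde{\bar\sigma}_k$ denotes the canonical horizontal $(0,m-1)$-form obtained by conjugating $\sigma_k$ and pairing the $L^k$-component against the $k$-th power of the tautological section $\zeta\in\Gamma(M,\pi^{*}L)$.

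\textbf{Decoupling the Hermitian Killing equation.} Because the equation $\nabla^{01}\tau=\tfrac{1}{m}\bdel\tau$ is $S^1$-equivariant, each Fourier mode $\tau_k$ must satisfy it independently. I would next substitute the explicit expression for the Levi-Civita connection of a Calabi-type metric (in terms of the Chern connection on $L$, the Levi-Civita connection of the K\"ahler--Einstein base, and derivatives of the momentum profile $\bb{X}(t)$) into the equation for $\tau_k$. Projecting the result onto the purely horizontal summand of $\Lambda^{0,1}M\otimes\Lambda^{0,m-1}M$ should produce a term proportional to $h_k(t)\,\pi^{*}(\bdel_N\sigma_k)$, forcing $\bdel_N\sigma_k=0$, that is $\sigma_k\in H^0(N,K_N\otimes L^k)$. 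The projections involving a vertical leg should yield a single first-order linear ODE for $h_k(t)$, whose coefficients depend explicitly on $k$, on the K\"ahler--Einstein constant of $N$, and on the momentum profile.

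\textbf{Solving the ODE and inverting the map.} This ODE, being first order and linear, admits a unique (up to scalar) nontrivial solution, and the scalar can be absorbed into $\sigma_k$. This gives a well-defined assignment $\sigma_k\mapsto\tau_k$ on each weight space. Summing over $k$ gives the claimed map; its injectivity follows directly from the uniqueness of the Fourier decomposition, while surjectivity follows from the fact that any $\tau\in\HK^{0,m-1}(M,g)$ with $\tau_{\vert\V}=0$ is of the above form once decoupling is applied, hence lies in the image.

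\textbf{Main obstacle.} The principal technical labour is the explicit translation of $\nabla^{01}\tau-\tfrac{1}{m}\bdel\tau=0$ into the Calabi-type frame, and the clean separation into a horizontal $\bdel_N$-closure condition and a vertical ODE. The K\"ahler--Einstein hypothesis on $N$ is essential here: it is precisely what ensures that the induced connection on $K_N$ is compatible with the Chern connection on $L$ in the way needed for the horizontal projection to land in $K_N\otimes L^k$ rather than in a differently twisted bundle. A secondary subtlety is that the profile $h_k$ must extend smoothly across the zero section of $L\to N$, which constrains admissible solutions of the ODE and has to be reconciled with the structure of $\bigoplus_k H^0(N,K_N\otimes L^k)$.
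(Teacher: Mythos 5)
Your overall strategy --- Fourier decomposition along the fibrewise circle action, lifting sections of $K_N\otimes L^k$ to horizontal forms on $M$, and splitting the Hermitian Killing equation into a horizontal part (forcing holomorphy over $N$) and a vertical part (an ODE fixing the radial profile) --- is exactly the route the paper takes, via Lemma \ref{lift-m}, Proposition \ref{lifty}, Lemma \ref{nsplit-f} and Theorem \ref{ex-calnew}, of which Theorem \ref{main3} is the top-degree case $p=m-1$ with the summand carrying a vertical leg cut out by $\tau_{\vert\V}=0$. One step in your sketch is a genuine gap: the ansatz $\tau=\sum_k h_k(t)\,\widetilde{\bar\sigma}_k$ already assumes separation of variables, whereas a horizontal $(0,m-1)$-form in the $k$-th Fourier mode need not a priori factor as a function of the moment coordinate times a lifted form --- its shape along $N$ could vary from level to level. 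This factorisation is precisely what the $K_{01}$-component of the Hermitian Killing equation delivers (in the paper, $\li_{K_{01}}(z^{-m}\tau)=0$ together with the characterisation of lifted forms in Proposition \ref{lifty},(ii)); assuming it up front begs the question for surjectivity. The profile then comes out as $h_k=C z^{m}$ for every $k$, independent of $k$ and of the momentum profile, contrary to your prediction.

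Your claim that the K\"ahler--Einstein hypothesis on $N$ is essential is also incorrect, and for a reason worth isolating: Theorem \ref{main3} is stated for an arbitrary polarised K\"ahler base, and the lift $\gamma\mapsto\widehat{\gamma}$ of $L^k$-valued forms needs only $R^{D}=i\omega_N\otimes 1_L$; no compatibility between the connection on $K_N$ and the Chern connection of $L$ enters, since the horizontal projection of the equation lands in the lift of $\Lambda^{0,m-1}(N,L^k)=K_N\otimes L^k$ for type and equivariance reasons alone. The Einstein condition on $N$ appears only later, in Proposition \ref{ex-cal}, as a \emph{consequence} of additionally requiring $\tau$ to be co-closed. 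Finally, the smoothness issue at the zero section you raise is moot in the paper's normalisation: the moment map satisfies $z>0$ and the metrics generally live on $M^{\times}$ or on annular regions (Proposition \ref{ex-extn}), so no extension across the zero section is required.
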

Above $K_N$ denotes the canonical line bundle of $N$ and $H^0(N,E)$ indicates the space of holomorphic sections of a holomorphic bundle $E$. The vertical 
distibution distribution of the fibration $\bb{C} \hookrightarrow M \to N$ is denoted by $\V$.
\begin{rema}
We believe that the foliation inspired techniques developed in this paper could be applied to solve other PDEs of geometric origin. In particular to treat the imaginary K\"ahlerian Killing spinor equation, which is essentially a coupled equation for Hermitian Killing forms with values in a complex line bundle. See \cite{Kirchberg,GS} for details and partial classification results. The lifting technique developed in section \ref{ex} is expected to play a role in this direction.
\end{rema} 
\section{Preliminaries} \label{prel-s}
\subsection{Conformal Killing forms} \label{CFK-s}
Let $(M^n,g)$ be a Riemannian manifold which is furthermore assumed to be oriented by a volume form $\nu$ in $\Lambda^nTM$. A 
differential $p$-form $\varphi$ in $\Lambda^pTM$ is called a {\it{conformal Killing form}} if and only if 
\begin{equation} \label{tweq}
\nabla_X \varphi=\frac{1}{p+1}X \lrcorner \, \di\! \varphi-\frac{1}{n-p+1} X^{\flat} \wedge \di^{\star}\! \varphi
\end{equation}
for all $X$ in $TM$. Another interpretation of the equation involved in the definition of conformal Killing forms is 
through the decomposition of the tensor product $\Lambda^1 \otimes \Lambda^p$. Let $a : \Lambda^1 \otimes \Lambda^p \to \Lambda^{p+1}$ be the total antisymmetrisation map 
and let the trace map $t : \Lambda^1 \otimes \Lambda^p \to \Lambda^{p-1}$ be given by 
$
t(\varphi)=\sum \limits_{i}e_i \lrcorner \, \varphi_{e_i}
$
for all $\varphi$ in $\Lambda^1 \otimes \Lambda^p$, where $\{e_i\}$ is a local orthonormal frame of $TM$. 
We have then an orthogonal decomposition 
$\Lambda^1 \otimes \Lambda^p=\Lambda^{p+1} \oplus \Lambda^{p-1} \oplus (\ker(a) \cap \ker(t))$
and hence \eqref{tweq} is  equivalent with the vanishing of the component of $\nabla \varphi$ on $\ker(a) \cap \ker(t).$ We also define, for subsequent use, the algebraic commutator 
$ 
[\alpha, \varphi]=\sum \limits_{i} (e_i \lrcorner \, \alpha) \wedge (e_i \lrcorner \, \varphi)
$
for all $\alpha$ in $\Lambda^2TM$ and for all $\varphi$ in $\Lambda^{\star}TM$.

The rest of this section is devoted to recall various characterisations and properties of conformal Killing forms. Let $R$ be the Riemann curvature tensor of $g$, defined by $R(X,Y)Z=\nabla^2_{Y,X}Z-\nabla^2_{X,Y}Z$.  The curvature operator 
$\r: \Lambda^{\star}TM \to \Lambda^{\star}TM$ of the metric $g$ is given by 
$$ 
\r \varphi=\sum \limits_{i,j} R(e_i, e_j) \wedge( e_i \lrcorner \, e_j \lrcorner \, \varphi)
$$
for all $\varphi$ in $\Lambda^{\star}TM$. It preserves the degree of forms and vanishes on 
$\Lambda^1TM \oplus \Lambda^{n-1}TM$. The Laplacian $\Delta=
\di\di^{\star}+\di^{\star}\di$ acting on differential forms is related to the rough Laplacian by 
\begin{equation} \label{wz}
\Delta=\nabla^{\star} \nabla +\r +\Ric. 
\end{equation}
Here $\Ric=\sum \limits_{}R(e_i, \cdot, e_i, \cdot)$ is the Ricci tensor of the metric $g$, acting on forms $\varphi$ in $\Lambda^{\star}TM$ 
according to $ 
\Ric(\varphi)=\sum \limits_{i} \Ric (e_i) \wedge (e_i \lrcorner \, \varphi).$
Differentiating the conformal Killing form equation shows, after using the Weizenb\"ock formula \eqref{wz}, that any conformal Killing form $\varphi$ in $\Lambda^pTM$ satisfies 
\begin{equation} \label{lap}
\frac{p}{p+1}\di^{\star}\!\di\!\varphi+\frac{n-p}{n-p+1}\di\!\di^{\star}\! \varphi=\r\varphi+\Ric (\varphi). 
\end{equation}
If moreover $M$ is compact, the converse has been proved in \cite{uwe}.
In this paper we will mainly use the following two facts.
\begin{pro} \label{der12} 
Let $\varphi$ be a conformal Killing $p$-form and $X$ any vector field. Then 
\begin{itemize}
\item[(i)]
$\frac{n-p}{n-p+1}\nabla_X(\di^{\star}\!\varphi)+\frac{1}{p+1}X \lrcorner \, \!\di^{\star}\di\!\varphi=\Ric X \lrcorner \, \varphi+\frac{1}{2} (X \lrcorner \, \r \varphi-\r(X \lrcorner \, \varphi))$ \\
\item[(ii)] $\di(\r \varphi)=\frac{p-1}{p+1} \r(\di\!\varphi)$. 
\end{itemize}
\end{pro}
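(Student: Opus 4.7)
The plan is to differentiate the conformal Killing equation \eqref{tweq} once more and invoke the Ricci identity, extracting part (i) by interior contraction with an orthonormal frame and part (ii) by exterior multiplication, each combined with the appropriate algebraic curvature identity.

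For part (i), I would apply $\nabla_X$ to \eqref{tweq} with $Y$ in place of $X$, then swap $X \leftrightarrow Y$ and subtract. The commutator $[\nabla_X, \nabla_Y] - \nabla_{[X,Y]}$ acting on $\varphi$ equals $-R(X,Y)\varphi$ in the paper's curvature convention. Setting $Y = e_i$ and summing $e_i \lrcorner(\cdots)$ over a local orthonormal frame, the right-hand side telescopes via the standard identities $\sum_i e_i \lrcorner e_i \lrcorner = 0$, $\sum_i e_i \lrcorner \nabla_{e_i} = -\di^\star$, $\sum_i e_i \lrcorner (e_i^\flat \wedge \omega) = (n - \deg\omega)\,\omega$, together with $\di^\star \di^\star = 0$, producing exactly $-\frac{n-p}{n-p+1}\,\nabla_X \di^\star\!\varphi - \frac{1}{p+1}\,X \lrcorner \di^\star\!\di\varphi$. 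The left-hand side $-\sum_i e_i \lrcorner R(X, e_i)\varphi$ is then identified with the target expression via the standard algebraic identity
\begin{equation*}
\sum_i e_i \lrcorner R(X, e_i)\varphi \;=\; \Ric X \lrcorner \varphi \;+\; \tfrac12\bigl(X \lrcorner \r\varphi - \r(X \lrcorner \varphi)\bigr),
\end{equation*}
which holds for any $p$-form as a consequence of the symmetries of $R$ and the derivation property of interior product; the identity is easy to check directly on $p=1$ (both sides reduce to $\psi(\Ric X)$ since $\r$ vanishes on $1$-forms) and extends to arbitrary $p$ by the usual Leibniz argument.

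For part (ii), I would compute $\di(\r\varphi) = \sum_k e_k^\flat \wedge \nabla_{e_k}(\r\varphi)$ directly from $\di = \sum_k e_k^\flat \wedge \nabla_{e_k}$. Expanding via the product rule applied to $\r = \sum_{i,j} R(e_i, e_j) \wedge (e_i \lrcorner e_j \lrcorner \,\cdot\,)$ and invoking the second Bianchi identity $\sum_{\textrm{cyc}(i,j,k)}(\nabla_{e_k} R)(e_i, e_j) = 0$ eliminates the $\nabla R$ contribution, leaving $\sum_k e_k^\flat \wedge \r(\nabla_{e_k}\varphi)$. Substituting the conformal Killing equation \eqref{tweq} for $\nabla_{e_k}\varphi$, the contribution from the $e_k^\flat \wedge \di^\star\varphi$ term drops out because $\r$ annihilates $\Lambda^1 \oplus \Lambda^{n-1}$ and the remaining wedge/contraction combinatorics collapse, leaving a scalar multiple of $\r(\di\varphi)$. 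A careful count of the degree-dependent factors produces the constant $\frac{p-1}{p+1}$.

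The principal obstacle is the algebraic bookkeeping in part (ii), where one must correctly track the interaction between the second Bianchi identity and the wedge/interior-product combinatorics in order to recover the precise constant $\frac{p-1}{p+1}$; the curvature contraction identity used in (i) is standard but similarly requires verification on general $p$-forms via the symmetries of $R$. No further geometric input is needed beyond these algebraic facts and the differentiated conformal Killing equation itself.
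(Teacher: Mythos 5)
Your strategy for part (ii) is the same as the paper's --- expand $\di(\r\varphi)=\sum_k e_k^{\flat}\w\nabla_{e_k}(\r\varphi)$, remove the $\nabla R$ contribution with the differential Bianchi identity, substitute the twistor equation, and count degrees on the $\di\varphi$-term via $\sum_k e_k^{\flat}\w e_k\lrcorner\,\beta=(\deg\beta)\beta$ to get $\frac{p-1}{p+1}\r(\di\varphi)$ --- and for part (i), which the paper does not prove but cites from \cite{uwe}, your sketch is the standard argument of that reference (differentiate the twistor equation, antisymmetrise, contract with a frame). So the architecture is sound.

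The genuine problem is your justification of the remaining cancellation in (ii). You assert that the contribution of the $e_k^{\flat}\w\di^{\star}\varphi$ term drops out ``because $\r$ annihilates $\Lambda^1\oplus\Lambda^{n-1}$''. That fact is not applicable here: writing $B=-\frac{1}{n-p+1}\di^{\star}\varphi$, the forms being hit by $\r$ are the $p$-forms $e_k^{\flat}\w B$, and $\r$ does not annihilate $p$-forms for $2\le p\le n-2$, which is exactly the range of interest. The term does vanish, but for a different reason: expanding $e_i\lrcorner e_j\lrcorner(e_k^{\flat}\w B)=\delta_{jk}\,e_i\lrcorner B-\delta_{ik}\,e_j\lrcorner B+e_k^{\flat}\w(e_i\lrcorner e_j\lrcorner B)$, the last piece dies against the outer $e_k^{\flat}\w$, and the first two combine into $-2\sum_{i,k}e_k^{\flat}\w R(e_k,e_i)\w(e_i\lrcorner B)$, which vanishes by the \emph{algebraic} (first) Bianchi identity $\sum_k e_k^{\flat}\w R(e_k,X)=0$ --- precisely the step the paper spells out. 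As written, your proof of (ii) does not close without this replacement. A lesser point on (i): the contraction identity $\sum_i e_i\lrcorner R(X,e_i)\varphi=\Ric X\lrcorner\varphi+\frac12(X\lrcorner\r\varphi-\r(X\lrcorner\varphi))$ is correct, but it does not ``extend to arbitrary $p$ by the usual Leibniz argument'' from the case $p=1$, since neither side is a derivation in $\varphi$; establishing it for general $p$ requires the pair symmetry and the first Bianchi identity of $R$, as you in fact concede in your closing paragraph.
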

\begin{proof}
(i) has been proved in \cite{uwe}. For proving (ii) we compute:
$$
\di(\r \varphi) \;=\;  \sum \limits_{i}e_i \wedge \nabla_{e_i} (\r \varphi)
=  \sum \limits_{i,j,k} e_i \wedge \biggl [ (\nabla_{e_i}R)(e_j, e_k) \wedge e_j \lrcorner \, e_k \lrcorner \, \varphi+R(e_j,e_k) \wedge e_j \lrcorner \, e_k \lrcorner \, \nabla_{e_i}\varphi\biggr ].
$$
The first summand above vanishes by the differential Bianchi identity written under the form 
$ \sum  e_i \wedge (\nabla_{e_i}R)(X,Y)=0$
for all $X,Y$ in $TM$. Using $A=\frac{1}{p+1}\di\!\varphi, B=-\frac{1}{n-p+1}\di^{\star}\! \varphi$ as a shorthand notation and taking into account the conformal Killing form equation for $\varphi$
we are left with 
$$
\begin{array}{ll}
\di(\r \varphi)
& =
 \sum \limits_{i,j,k}e_i \wedge R(e_j,e_k) \wedge e_j \lrcorner \, e_k \lrcorner \, e_i \lrcorner \, A 
\;+\;  \sum \limits_{i,j,k}e_i \wedge R(e_j,e_k) \wedge e_j \lrcorner \, e_k \lrcorner \, (e_i \wedge B)\\[2ex]
&=
(p-1) \r A \;+\; \sum \limits_{i,j,k}e_i \wedge R(e_j,e_k) \wedge e_j \lrcorner \, e_k \lrcorner \, (e_i \wedge B). 
\end{array}
$$
Since 
$\,
 e_j \lrcorner \, e_k \lrcorner \, (e_i \wedge B)=\langle e_k, e_i\rangle e_j \lrcorner \, B- \langle e_i, e_j \rangle e_k \lrcorner \, B+e_i \wedge (e_j \lrcorner \, e_k \lrcorner \, B)$
the last summand above equals 
$$
\sum \limits_{i,j} e_i \wedge R(e_j, e_i) \wedge e_j \lrcorner \, B-\sum \limits_{i,k} e_i \wedge R(e_i, e_k) \wedge e_k \lrcorner \, B 
$$
and hence it vanishes after using the Bianchi identity under the form $ \sum  e_i \wedge R(e_i ,X)=0$ 
for all $X$ in $TM$. The claim is now proved.
\end{proof}
In particular a non-trivial obstruction to the existence of a conformal Killing $p$-form $\varphi $, $p \geq 2$ is 
\begin{equation} \label{obs}
\di\! \r \di\!\varphi=0,
\end{equation}
obtained by differentiating in (ii) above. It will play a key role later on in the paper.
\subsection{Elements of K\"ahler geometry} \label{K-prel}
Let $(M^{2m},g,J)$ be a K\"ahler manifold with K\"ahler form $\omega=g(J \cdot, \cdot)$. The complex structure  $J$ acts on forms via
\begin{equation*}
J \alpha(X_1, \ldots, X_p)=\alpha(JX_1, \ldots, JX_p).
\end{equation*}
Letting $\Lambda^{p,q}TM$ be the space of (complex valued) forms of bidegree $(p,q)$ recall that $[\omega, \varphi]=i(q-p) \varphi $
for all $\varphi$ in $\Lambda^{p,q} M$. For any $X$ in $TM$ we write 
$$
X_{1,0}=\frac{1}{2}(X-iJX) , \ X_{0,1}=\frac{1}{2}(X+iJX)
$$
for the components in $T^{1,0}M$ respectively $T^{0,1}M$. Letting
$
()^{\flat}:TM \to \Lambda^1TM
$ 
be the isomorphism induced by the metric, we have for any $X$ in $TM$ one forms 
$$ 
X^{1,0}:=\frac{1}{2}(X^{\flat}+i(JX)^{\flat}), \ X^{0,1}:=\frac{1}{2}(X^{\flat}-i(JX)^{\flat})
$$
in $\Lambda^{1,0}M$ respectively $\Lambda^{0,1}M$. For any vector field $X$ and any $\alpha$ in 
$\Lambda^{p,q}M$  we write 
\begin{equation} \label{proj}
(X \lrcorner \, \alpha)_{\Lambda^{p,q-1}M}=X_{0,1} \lrcorner \, \alpha, \ (X \lrcorner \, \alpha)_{\Lambda^{p-1,q}M}=X_{1,0} \lrcorner \, \alpha. 
\end{equation}
for the orthogonal projections.
Denote by $L_{\rho}$ respectivelly $L_{\omega}$ the exterior multiplication with the Ricci form $\rho=g(\Ric J \cdot, \cdot)$ respectively the K\"ahler form $\omega$. An algebraic fact we will use in what follows is
\begin{equation} \label{ls}
[L^{\star}_{\omega},L_{\rho}]=\frac{\s}{2}-\Ric 
\end{equation}
on $\Lambda^{\star}TM$, where $\s$ denotes the scalar curvature of the metric $g$. Because $J$ is a complex structure the exterior derivative splits as 
$\di=\partial+\overline{\partial}$
where $\partial : \Lambda^{p,q}M \to \Lambda^{p+1,q}M$ and $\overline{\partial} : \Lambda^{p,q}M \to \Lambda^{p,q+1}M$. From $\di^2=0$ it follows that 
$ \partial^2=\overline{\partial}^2=0, \ \partial \overline{\partial}+\overline{\partial} \partial=0.$
The covariant derivative $\nabla$ of the metric $g$ splits as $\nabla=\nabla^{1,0}+\nabla^{0,1}$ and moreover 
$\nabla^{1,0}=\partial $ on $\Lambda^{0,\star}M$. Since $(g,J)$ is K\"ahler we also have the so-called {\it K\"ahler identities}
\begin{equation} \label{ki}
[L^{\star}_{\omega}, \overline{\partial}]=-i{\partial}^{\star},  \quad  \ [L^{\star}_{\omega}, \partial]=i{\overline{\partial}}^{\star}
\end{equation}
together with their dual version 
\begin{equation} \label{kid}
[\overline{\partial}^{\star},L_{\omega}]=i\partial, \quad  \ [\partial^{\star},L_{\omega}]=-i\overline{\partial}
\end{equation}

Finally we recall for later use the following Weitzenb\"ock formula on $(p,q)$-forms 
\begin{equation}\label{wbf}
\Delta_{\bar\del}=(\nabla^{0,1})^* \nabla^{0,1} \;-\;i \,[\rho, \cdot] \ .
\end{equation}
The curvature endomorphism $\,- i [\rho, \cdot] \,$ acts as $\,\Ric \,$ on forms of type $(0,q)$ and by
multiplication with the function $\frac12 \,\s$ in the special case of forms of type $(0,m)$, where $m = \dim_{\mathbb C} M$.

We end this section  by recalling some curvature related facts. The operator $\r$ preserves complex type and vanishes on $\Lambda^{0, \star}M \oplus \Lambda^{\star,0}M$. Moreover, by straightforward computation  
\begin{equation} \label{cc1}
[\r, L_{\omega}] = -2L_{\rho} 
\end{equation}
on $\Lambda^{\star}TM$.
 
Throughout this paper we indicate with $\aut(M,J):=\{X:\li_XJ=0\}$ the Lie algebra of real holomorphic vector fields and with 
$\aut(M,g,J):=\{X \in \aut(M,J):\li_Xg=0 \}$ the Lie algebra of holomorphic Killing vector fields.
\section{Hermitian twistor and Killing forms} \label{defn}
\subsection{Hermitian twistor forms on K\"ahler manifolds} \label{sect-prol}
\begin{defi} \label{def1}
A  {\it Hermitian twistor form} on a K\"ahler manifold $M$ is a form  $\varphi$ in $\Lambda^{p,q}M$
satisfying for all vector fields $X$ the equation
$$
 \nabla_X \varphi=(X \wedge B+X \lrcorner \, A)_{\Lambda^{p,q}TM}
$$
for some forms $A,B$ in $\Lambda^{\star}TM$. If $B$ vanishes $\varphi$ is called a {\it Hermitian Killing form}.
\end{defi}
The subscript above indicates orthogonal projection onto the space of $(p,q)$-forms. The space of Hermitian Killing forms 
of type $(p,q)$ is denoted with $\HK^{p,q}(M,g)$. Hermitian Killing forms have been introduced in \cite{handbk} in the general  framework of almost-Hermitian manifolds. It turns out that conformal Killing forms on K\"ahler manifolds, and in particular special forms (see next section), are a subclass of  the more flexible class of Hermitian twistor forms. The following equivalent definition will be useful later on.

\begin{lema}
A form $\varphi$ in $\Lambda^{p,q}M$ is a Hermitian Killing form if and only if
\begin{equation}\label{HK}
\nabla_X \varphi \; =\;  \frac{1}{p+1} \, X_{1 0 }\, \lrcorner \, \, \del \phi \;+\; \frac{1}{q+1}X_{01}\, \lrcorner \, \, \bar\del \phi
\end{equation}
for all vector fields $X$.
\end{lema}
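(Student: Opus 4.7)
The plan is to unpack Definition \ref{def1} and to identify the undetermined form $A$ explicitly in terms of $\partial\varphi$ and $\bar\partial\varphi$, using only the bidegree decomposition of contractions provided by \eqref{proj}.

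The implication from \eqref{HK} back to Definition \ref{def1} is immediate: given \eqref{HK}, I would simply set $A := \frac{1}{p+1}\partial\varphi + \frac{1}{q+1}\bar\partial\varphi$ and $B := 0$, and observe by \eqref{proj} that $(X \lrcorner A)_{\Lambda^{p,q}M}$ equals $\frac{1}{p+1}X_{1,0} \lrcorner \partial\varphi + \frac{1}{q+1} X_{0,1} \lrcorner \bar\partial\varphi$, which is exactly the right-hand side of \eqref{HK}.

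For the converse, assume $\nabla_X\varphi = (X \lrcorner A)_{\Lambda^{p,q}M}$ for some form $A$. First I would decompose $A$ by bidegree and note that only the components $A_{p+1,q}$ and $A_{p,q+1}$ can survive the projection onto $(p,q)$-type after contracting with $X$; by \eqref{proj} the equation reduces to
\[
\nabla_X\varphi \;=\; X_{1,0} \lrcorner A_{p+1,q} \;+\; X_{0,1} \lrcorner A_{p,q+1}.
\]
The remaining step is to recover $A_{p+1,q}$ and $A_{p,q+1}$ from $\partial\varphi$ and $\bar\partial\varphi$ respectively. Working in a local orthonormal frame $\{\xi_a\}$ of $T^{1,0}M$ with dual $\{\xi^a\}$, the standard splitting of the exterior derivative gives $\partial\varphi = \sum_a \xi^a \wedge \nabla_{\xi_a}\varphi$ and $\bar\partial\varphi = \sum_a \bar\xi^a \wedge \nabla_{\bar\xi_a}\varphi$. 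Evaluating the displayed equation at $X = \xi_a$ yields $\nabla_{\xi_a}\varphi = \xi_a \lrcorner A_{p+1,q}$, and the elementary trace identity $\sum_a \xi^a \wedge (\xi_a \lrcorner \alpha) = r\,\alpha$ for $\alpha \in \Lambda^{r,s}M$ (the $(1,0)$-piece of the total-degree identity) gives $\partial\varphi = (p+1)\, A_{p+1,q}$. The identical argument with $\bar\xi_a$ in place of $\xi_a$ produces $\bar\partial\varphi = (q+1)\, A_{p,q+1}$. Substituting these expressions back into the displayed formula yields \eqref{HK}.

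There is no real obstacle: the argument is a routine bookkeeping in bidegrees. The only inputs beyond \eqref{proj} are the decomposition of $d = \partial + \bar\partial$ via a complex frame and the type-counting identity $\sum_a \xi^a \wedge \xi_a \lrcorner = r\cdot\mathrm{id}$ on $(r,s)$-forms.
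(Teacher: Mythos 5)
Your proof is correct, and it is the natural argument: the paper states this lemma without proof, treating it as routine bidegree bookkeeping, and your identification of $A_{p+1,q}=\tfrac{1}{p+1}\del\varphi$ and $A_{p,q+1}=\tfrac{1}{q+1}\bdel\varphi$ via the trace identity $\sum_a \xi^a\wedge(\xi_a\lrcorner\,\cdot)=r\cdot\mathrm{id}$ on $\Lambda^{r,s}M$ is exactly the intended computation. The only point worth making explicit is that writing $\del\varphi=\sum_a\xi^a\wedge\nabla_{\xi_a}\varphi$ uses that $\nabla$ preserves bidegree, which holds here because $(g,J)$ is K\"ahler.
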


To describe the local normal forms and further properties of elements in $\HK^{0,q}(M,g)$ some preliminary observations are needed. 
Letting 
$\mathfrak{h}^{0,q}(M,g):=\{\alpha \in \Lambda^{0,q}M : \nabla^{0,1}\alpha=0 \}$
we have 
\begin{equation*} 
\HK^{0,q}(M,g) \cap \ker \overline{\partial}=\mathfrak{h}^{0,q}(M,g)
\end{equation*}
for $0 \le q \le m$, with the special case $\; \HK^{0,m}(M,g)=\mathfrak{h}^{0,m}(M,g)$. The Lie algebra of real holomorphic vector fields 
$\mathfrak{aut}(M,J)$ is dual to the space $\mathfrak{h}^{0,1}(M,g)$ in the sense that 
$X \in \mathfrak{aut}(M,J) \mapsto X^{0,1} \in \mathfrak{h}^{0,1}(M,g)$
is a linear isomorphism. Let  $\alpha$ be in $\mathfrak{h}^{0,1}(M,g)$ then 
a local {\it{holomorphy potential}} for $\alpha$ is a function $f$  such that $\bdel f=\alpha$. It is unique  
up to addition of holomorphic functions, and its existence is granted by the Dolbeault Lemma.
\begin{pro} \label{norm-form-1}
Let $\tau$ belong to $\HK^{0,m-1}(M,g)$. Around any point where $\bdel \tau \neq 0$ let $\alpha_i, 1 \le i \le m$ be a local basis 
in $\mathfrak{h}^{0,1}(M,g)$ with local holomorphy potentials $f_i, 1 \le i\le m$. Then:
$$ 
\tau=G \sum \limits_{i=1}^{m} (-1)^{i-1}f_i \bdel f_1\,  \wedge \ldots \wedge \widehat{\bdel f_i} \wedge \ldots \bdel f_m
$$
for some (localy defined) holomorphic function $G$.
\end{pro}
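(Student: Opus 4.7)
The plan is to expand $\tau$ in the local frame of $\Lambda^{0,m-1}M$ adapted to $\{\alpha_i\}$, reduce the Hermitian Killing equation \eqref{HK} to a first-order system on the coefficients, integrate it, and finally adjust the potentials $f_i$ to absorb residual holomorphic terms.

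First I would note that the $(1,0)$-part of \eqref{HK} is automatic for forms of type $(0,q)$: indeed, for any $\sigma \in \Lambda^{0,q}M$ and any $X \in T^{1,0}M$, $\del\sigma \in \Lambda^{1,q}M = (T^{1,0})^* \otimes \Lambda^{0,q}M$ coincides with $\nabla^{1,0}\sigma$ because no additional antisymmetrization occurs in that bidegree, hence $X \lrcorner \del\sigma = \nabla_X\sigma$. Consequently $\tau \in \HK^{0,m-1}(M,g)$ is equivalent to
\begin{equation*}
\nabla_Z \tau = \tfrac{1}{m}\, Z \lrcorner \bdel\tau \qquad \text{for all } Z \in T^{0,1}M.
\end{equation*}

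Next I would pick the $T^{0,1}M$-frame $\{Z_j\}$ dual to $\{\alpha_j\}$. Since $\nabla^{0,1}\alpha_i = 0$ implies $\bdel\alpha_i = 0$, the identity $\bdel\alpha_i(Z_k,Z_l) = -\alpha_i([Z_k,Z_l])$ forces $[Z_k,Z_l] = 0$; by Frobenius the $Z_j$ can be straightened into antiholomorphic coordinate fields $\del/\del\bar w^j$, and the potentials may be chosen so that $f_j = \bar w^j$ modulo holomorphic summands. Writing $\tau = \sum_i g_i\, \alpha_1 \wedge \ldots \wedge \widehat{\alpha_i} \wedge \ldots \wedge \alpha_m$ and using $\nabla_{Z_j}\alpha_i = 0$, the above equation becomes the coefficient-wise system
\begin{equation*}
Z_j(g_i) = 0 \text{ for } j \neq i, \qquad Z_i(g_i) = (-1)^{i-1} G,
\end{equation*}
where $G$ is defined by $\bdel\tau = mG\,\Omega$ with $\Omega = \alpha_1 \wedge \ldots \wedge \alpha_m$. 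Commutativity of the $Z_j$ on functions, applied to this system, forces $\bdel G = 0$, so $G$ is holomorphic, and $G \neq 0$ on a neighbourhood by the hypothesis $\bdel\tau \neq 0$.

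Finally I would integrate the system to obtain $g_i = (-1)^{i-1} G f_i + h_i$ with $h_i$ holomorphic, and absorb the $h_i$ via the change of potential $f_i \mapsto f_i + (-1)^{i-1} h_i/G$, which is legitimate because $h_i/G$ is locally holomorphic; after this modification $f_i$ is still a holomorphy potential for $\alpha_i$, and $\tau$ takes the asserted form with that (new) choice of potentials. The main obstacle I expect is the bridge from the pointwise condition $\nabla^{0,1}\alpha_i = 0$ to commuting coordinate vector fields and to the holomorphicity of $G$; once that step is secured, the integration and the gauge adjustment of the potentials are routine.
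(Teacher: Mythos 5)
Your argument is correct and follows essentially the same route as the paper: expand $\tau$ in the frame $\alpha_1\wedge\ldots\wedge\widehat{\alpha_i}\wedge\ldots\wedge\alpha_m$, read off $\bdel g_i=(-1)^{i-1}G\,\alpha_i$ from $\nabla^{0,1}\tau=\tfrac{1}{m}\bdel\tau$, deduce that $G$ is holomorphic, and then choose the potentials so that $g_i=(-1)^{i-1}Gf_i$. The only cosmetic differences are that the paper gets $\bdel G=0$ by applying $\bdel$ to $\bdel g_i=(-1)^{i-1}G\,\alpha_i$ and using $\bdel^2=0$ together with $\bdel\alpha_i=0$ (which bypasses your Frobenius/commutator detour), and it absorbs your holomorphic remainders $h_i$ in one stroke by simply defining $f_i:=m(-1)^{i-1}F_i/H$.
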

\begin{proof}
Write $\tau =\sum \limits_{i=1}^m F_i \alpha_1 \wedge \ldots \wedge \widehat{\alpha}_i \wedge \ldots \wedge \alpha_m$ and $\bdel \tau=H \alpha_1 \wedge \ldots \wedge \alpha_m$ with 
$H \neq 0$. Expanding $\nabla^{0,1}\tau=\frac{1}{m}\bdel \tau$ gives $\bdel F_i=\frac{(-1)^{i-1}H}{m} \alpha_i$. Moreover, applying $\bdel$ shows that $\bdel H \wedge \alpha_i
=0, 1 \le i \le m$ hence $\bdel H=0$. The functions $f_i=\frac{m(-1)^{i-1}F_i}{H}$ are holomorphy potentials for $\alpha_i$ and the claim follows with $G=\frac{H}{m}$. 
\end{proof}

As already mentioned the concept of Hermitian twistor forms is much more flexible compared to conformal
Killing forms. In particular one has many examples of compact K\"ahler manifolds admitting Hermitian Killing forms, which can be obtained as follows
\begin{ex} \label{eg2}
Let $\{K_i ,1 \le i \le p \le m \}$ be Hamiltonian Killing vector fields for $(M^{2m},g,J)$ with moment maps $t_i$, that is $\li_{K_i}\!g=0$ and $K_i \lrcorner \, \omega=dt_i$. 
Then
$$ \tau=\sum \limits_{i=1}^{p} (-1)^{i-1}t_i \bdel t_1 \wedge \ldots \wedge \widehat{\bdel t_i} \wedge \ldots  \wedge \bdel t_p 
$$
is a Hermitian-Killing form of type $(0,p-1)$. In particular any compact toric K\"ahler manifold admits such forms.
\end{ex}

We will describe first the holomorphic prolongation of the Hermitian Killing equation~\eqref{HK}, in the
particular case of forms $\tau$ of type $(0,m-1)$.
We first note that \eqref{wbf} implies
\begin{equation} \label{hol-wz}
\mathfrak{h}^{0,q}(M,g) \subseteq \ker (\Delta_{\bdel}+i[ \rho, \cdot ])
\end{equation}
with equality when $M$ is compact; in particular 
\begin{equation} \label{hol-top}
\mathfrak{h}^{0,m}(M,g) \subseteq \ker (\Delta_{\bdel}-\tfrac{\s}{2}).
\end{equation}
\begin{pro} \label{prol-1}
Let $(\tau,X)$ belong to $\HK^{0,m-1}(M,g) \times \Gamma(TM)$. Then
\begin{itemize}
\item[(i)] $\overline{\partial} \tau \in \mathfrak{h}^{0,m}(M,g)  
\ \mbox{and} \ 
\Delta(\bar\del \tau) = \s \,\bar\del \tau$
\item[(ii)] $\nabla_{X_{0,1}}\partial \tau = 
\frac{1}{2} \r (X^{\flat} \wedge \tau)-\frac{1}{m} X_{0,1} \,\lrcorner \,\, \partial \overline{\partial} \tau $
\item[(iii)] 
$\nabla_{X_{0,1}}\overline{\partial}^{\star} \tau =  
\Ric X\, \lrcorner \,\, \tau -\frac{1}{m} X_{0,1} \,\lrcorner \,\, \overline{\partial}^{\star} \overline{\partial} \tau $
\item[(iv)] 
$\del^{\star} \del \tau=\Ric(\tau)+ \frac{1}{m} \bdel^{\star} \bdel \tau. $
\end{itemize}
\end{pro}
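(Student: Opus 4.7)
All four assertions are second-order consequences of the Hermitian Killing equation, which for $\tau\in\Lambda^{0,m-1}M$ splits as
\[
\nabla_{X_{1,0}}\tau \;=\; X_{1,0}\lrcorner\del\tau, \qquad \nabla_{X_{0,1}}\tau \;=\; \tfrac{1}{m}\,X_{0,1}\lrcorner\bdel\tau.
\]
The common strategy is to differentiate each of these once more, apply the Ricci identity, and exploit the K\"ahler vanishing $R(X_{0,1},Y_{0,1})=R(X_{1,0},Y_{1,0})=0$ that follows from the pure $(1,1)\times(1,1)$-type of the Riemann tensor.

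For (i), I would differentiate the second split identity by $\nabla_{Y_{0,1}}$ with $X,Y$ parallel at the point. The Ricci identity then gives
\[
R(X_{0,1},Y_{0,1})\tau \;=\; \tfrac{1}{m}\bigl(X_{0,1}\lrcorner \nabla_{Y_{0,1}}\bdel\tau \,-\, Y_{0,1}\lrcorner \nabla_{X_{0,1}}\bdel\tau\bigr),
\]
and the LHS vanishes by the K\"ahler type restriction. Writing $\bdel\tau = F\,\bar\theta^{1\ldots m}$ in a unitary frame parallel at the point and inspecting the resulting symmetry against two distinct basis vectors $\bar\zeta_a,\bar\zeta_b$ forces $\bar\zeta_a(F)=0$ for every $a$, i.e.\ $\bdel F=0$. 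Hence $\nabla^{0,1}(\bdel\tau)=0$, so $\bdel\tau\in\mathfrak{h}^{0,m}(M,g)$. The Laplacian identity then follows from \eqref{wbf}: on $\Lambda^{0,m}M$ the curvature term $-i[\rho,\cdot]$ is multiplication by $\tfrac{\s}{2}$, and $\Delta=2\Delta_{\bdel}$ upgrades $\Delta_{\bdel}(\bdel\tau)=\tfrac{\s}{2}\bdel\tau$ to $\Delta(\bdel\tau)=\s\,\bdel\tau$.

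For (ii) and (iii), differentiate instead the first split identity in a $(0,1)$-direction. The Ricci identity yields
\[
X_{1,0}\lrcorner \nabla_{Y_{0,1}}\del\tau \;=\; R(X_{1,0},Y_{0,1})\tau \;+\; \tfrac{1}{m}\,Y_{0,1}\lrcorner \nabla_{X_{1,0}}\bdel\tau ,
\]
and part (i) forces $\nabla_{X_{1,0}}\bdel\tau = X_{1,0}\lrcorner\del\bdel\tau$ via the wedge isomorphism $\Lambda^{1,0}M\otimes\Lambda^{0,m}M\cong\Lambda^{1,m}M$. Reconstructing $\nabla_{Y_{0,1}}\del\tau$ from its $T^{1,0}M$-contractions through the recovery identity $\beta=\sum_a \theta^a\wedge(\zeta_a\lrcorner\beta)$ on $\Lambda^{1,m-1}M$ in a unitary frame assembles the $\tfrac{1}{m}$-term into $-\tfrac{1}{m}Y_{0,1}\lrcorner\del\bdel\tau$, while the curvature sum $\sum_a \theta^a\wedge R(\zeta_a,Y_{0,1})\tau$ is identified with $\tfrac{1}{2}\r(Y^\flat\wedge\tau)$ by expanding the definition of $\r$, using $\r\tau=0$ and the vanishing of the $(0,2)$-type components of $R$; this yields (ii). For (iii), a dual extraction from the same commutator relation, contracting with $(e_i)_{0,1}$ rather than assembling with $\theta^a$, converts the curvature sum to the Ricci trace $\Ric X\lrcorner\tau$ via $\sum_i (e_i)_{0,1}\lrcorner R(e_i,X_{0,1}) = -\Ric X\lrcorner\cdot$.

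For (iv), I would use the Weitzenb\"ock formula \eqref{wbf}, the K\"ahler equality $\Delta_{\del}=\Delta_{\bdel}$, and the trivial vanishing $\del^*\tau=0$ to write $\del^*\del\tau = \Delta_{\bdel}\tau = (\nabla^{0,1})^*\nabla^{0,1}\tau + \Ric(\tau)$. The HK equation makes the Bochner term explicit: in a unitary frame $\{\zeta_a,\bar\zeta_a\}$ parallel at the point,
\[
(\nabla^{0,1})^*\nabla^{0,1}\tau \;=\; -\sum_a \nabla_{\zeta_a}\nabla_{\bar\zeta_a}\tau \;=\; -\tfrac{1}{m}\sum_a \bar\zeta_a\lrcorner \nabla_{\zeta_a}\bdel\tau,
\]
and part (i) with $\nabla_{\zeta_a}\bdel\tau = \zeta_a\lrcorner\del\bdel\tau$ combined with the frame expression $\bdel^*\bdel\tau = \sum_a \zeta_a\lrcorner\bar\zeta_a\lrcorner \del\bdel\tau$ gives $(\nabla^{0,1})^*\nabla^{0,1}\tau = \tfrac{1}{m}\bdel^*\bdel\tau$, whence (iv). The main obstacle throughout is clerical: tracking the factors of $2$ that enter via the real-versus-unitary frame conversion and the normalisation of the natural pairings $\Lambda^{1,0}M\otimes\Lambda^{p,q}M\to\Lambda^{p+1,q}M$, and matching the raw curvature expressions from the Ricci identity in (ii) and (iii) with the specific operators $\r$ and $\Ric$.
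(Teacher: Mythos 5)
Your proposal is correct and follows essentially the same route as the paper: differentiate the split Hermitian Killing equation, apply the Ricci identity together with the $(1,1)$-type restriction on the K\"ahler curvature tensor, and recover (i)--(iii) by type projection, reconstruction from contractions, and the K\"ahler identities, with (iv) obtained by a trace. The only cosmetic differences are that the paper extracts (i) and (ii) simultaneously from the single quantity $\sum_k e^k \wedge \nabla^2_{e_k,X}\tau$ and proves (iv) by contracting (ii) directly rather than via the Weitzenb\"ock formula \eqref{wbf}; both variants are sound.
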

\begin{proof}
We prove (i) and (ii) at the same time. Let $\{e_k\}$ be a local orthonormal frame on $M$ with 
the dual basis $\{e^k\}$ of $1$-forms.
Then differentiating  in \eqref{HK}  for a Hermitian Killing form $\,\tau\,$ of type $\,(0,m-1)\,$
yields 
$$
\begin{array}{ll}
\sum e^k \wedge \nabla^2_{e_k,X} \tau
& =
\sum e^k \wedge (X_{1,0} \lrcorner \, \nabla_{e_k} \partial \tau) \;+\; \sum \frac{1}{m} e^k \wedge (X \lrcorner \, \nabla_{e_k} \overline{\partial} \tau) \\[1.5ex]
& = 
(\nabla_{X_{1,0}}\del \tau - X_{1,0} \lrcorner \, \bdel \del \tau ) \;+\;
\frac1m (\nabla_X(\bdel \tau)-X \lrcorner \, \del \bdel \tau )
\end{array}
$$
Using the Ricci identity and the curvature relation
\begin{equation} \label{id-c1}
\r (X^{\flat} \wedge \varphi) \;=\; 2 \sum  R(e_i,X) \wedge e_i \lrcorner \, \varphi  
\end{equation}
where $\varphi$ is a form in $ \Lambda^{0,p}M$ we get for the l.h.s 
\begin{equation*}
\sum e^k \wedge \nabla^2_{e_k,X} \tau=\nabla_X(\di\!\tau)-\sum e^k \wedge [R(e_k,X),\tau]=
\nabla_X(\di\! \tau)-\tfrac{1}{2}\r(X^{\flat} \wedge \tau). 
\end{equation*}
Substituting this into the equation above yields 
\begin{equation*}
\nabla_X(\di\!\tau)-\tfrac{1}{2} \r (X^{\flat} \wedge \tau)=
\nabla_{X_{1,0}} \partial \tau-X_{1,0} \lrcorner \, \bdel \del \tau +\tfrac{1}{m} ( 
\nabla_X(\bdel \tau)-X \lrcorner \, \del \bdel \tau). 
\end{equation*}
Since $\del$ and $\bdel$ anti-commute expanding $\di=\del+\bdel$ leads easily to 
\begin{equation*}
\nabla_{X_{0,1}} \del \tau+(1-\tfrac{1}{m}) \nabla_X(\bdel \tau)
\;=\;
\tfrac{1}{2} \mathfrak{R}(X^{\flat} \wedge \tau)-\tfrac{1}{m}X_{0,1} \lrcorner \, \del \bdel \tau+
(1-\tfrac{1}{m}) X_{1,0} \lrcorner \, \del \bdel \tau. 
\end{equation*}
Type considerations, together with the vanishing of $\mathfrak{R}$ on $\Lambda^{0, \star}M$ show that 
$\mathfrak{R}(X^{\flat} \wedge \tau)$ and $X_{0,1} \lrcorner \, \del \bdel \tau$ belong to $\Lambda^{1,m-1}M$ while $X_{1,0} \lrcorner \, \del \bdel \tau$ is in $\Lambda^{0,m}M$. 
The claims in (i) and (ii) follow by projection onto $\Lambda^{0,m}M$ respectively $\Lambda^{1,m-1}M$.\\
(iii) follows by applying $L^{\star}_{\omega}$ in (ii) and using the K\"ahler identities as well as 
\begin{equation*}
 L^{\star}_{\omega} \partial \overline{\partial} \tau = i \, \overline{\partial}^{\star} \overline{\partial} \tau 
 \qquad\mbox{and}\qquad
 L^{\star}_{\omega} \r(X \wedge \tau)=2i \Ric X \lrcorner \, \tau.
\end{equation*}
(iv) is proved by contracting (ii) while  taking into account the three equations 
$$ 
-\sum e_k \lrcorner \, \nabla^{0,1}_{e_k}\del \tau=\del^{\star} \del \tau, \;
 -\sum e_k \lrcorner \, (e_k)_{0,1} \lrcorner \, \del \bdel \tau \,=\, \bdel^{\star} \bdel \tau, \;
  \sum e_k \lrcorner \, \r(e_k \w \tau) \,=\,-2\Ric(\tau) \ .
 $$
\end{proof}
Recall that a smooth section of some vector bundle $E \to M$ is said to have the {\it strong unique continuation 
property} if it vanishes over $M$ as soon as it vanishes over some open non-empty subset of $M$. 
\begin{coro}\label{zero}
If $M$ is connected, any $ \tau$ in $\HK^{0,m-1}(M,g)$ has the strong unique continuation property. In particular, 
if $\tau $ is not identically zero the set $D = \{ x \in M : \tau_x \neq 0\}$ is dense in $M$.
\end{coro}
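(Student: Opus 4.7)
The plan is to reduce strong unique continuation (SUC) for $\tau$ to SUC for elements of $\mathfrak{h}^{0,q}(M,g)$, using Proposition \ref{prol-1}(i) as the bridge.

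First I would establish SUC for $\mathfrak{h}^{0,q}(M,g)$. The K\"ahler condition forces the curvature of the Levi-Civita connection to have type $(1,1)$ in its first two arguments, so $(\nabla^{0,1})^2 = 0$ on $\Lambda^{0,q}M$; that is, $\nabla^{0,1}$ is a flat partial connection. Consequently one can locally find a $\nabla^{0,1}$-parallel frame $\{\sigma_I\}_{|I|=q}$ of $\Lambda^{0,q}M$, in which any $\alpha \in \mathfrak{h}^{0,q}(M,g)$ has components that are holomorphic functions. Classical SUC for holomorphic functions then forces $\alpha$ to vanish throughout the local chart as soon as it vanishes on any nonempty open subset, and a standard open-and-closed connectedness argument propagates this to all of $M$.

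Now suppose $\tau \in \HK^{0,m-1}(M,g)$ vanishes on a nonempty open set $U$. Then $\bdel\tau$ vanishes on $U$ as well, and since $\bdel\tau \in \mathfrak{h}^{0,m}(M,g)$ by Proposition \ref{prol-1}(i), the previous paragraph forces $\bdel\tau \equiv 0$ on $M$. Substituting back into the Hermitian Killing equation \eqref{HK} reduces it to $\nabla^{0,1}\tau = 0$, so $\tau \in \mathfrak{h}^{0,m-1}(M,g)$. A second application of SUC for $\mathfrak{h}^{0,m-1}$ then yields $\tau \equiv 0$ on $M$, proving SUC for $\tau$. The density of $D$ follows immediately: if $\tau \not\equiv 0$ but $D$ were not dense, the interior of $M\setminus D$ would be a nonempty open set on which $\tau$ vanishes, contradicting SUC.

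I do not anticipate any real obstacle. The only mildly delicate point is the flatness of $\nabla^{0,1}$ on $\Lambda^{0,q}M$, which is a standard K\"ahler-geometry fact; alternatively one can invoke a general Aronszajn-type unique continuation theorem for the elliptic first-order system $\nabla^{0,1}=0$. The key structural input is Proposition \ref{prol-1}(i), which packages the fact that the top-degree ``derivative'' $\bdel\tau$ already behaves like a holomorphic object, so that no direct second-order elliptic estimate on $\tau$ itself is required.
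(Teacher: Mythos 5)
Your proposal is correct and follows essentially the same route as the paper: vanishing of $\tau$ on an open set kills $\bdel\tau$ there, Proposition \ref{prol-1}(i) plus unique continuation for elements of $\mathfrak{h}^{0,m}(M,g)$ (holomorphic coefficients in a $\nabla^{0,1}$-parallel frame) forces $\bdel\tau\equiv 0$, and then the same argument applied to $\tau\in\mathfrak{h}^{0,m-1}(M,g)$ finishes the proof. The paper realises the $\nabla^{0,1}$-parallel frame concretely as wedge products of metric duals of local real holomorphic vector fields rather than invoking flatness of the partial connection, but this is the same mechanism.
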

\begin{proof}
Assuming $\tau = 0 $ on an open set $U\subset M$ we get $\bar\del \tau = 0$ on $U$. 
But $\bar\del \tau$ belongs to $\mathfrak{h}^{0,m-1}(M,g)$ by Proposition~\ref{prol-1} and hence has the strong unique continuation property; to see this it is enough to record that with respect to any local frame $\{\alpha_i,1 \leq i \leq m\}$ in $\Lambda^{0,1}M$ which is the metric dual of a local frame of real holomorphic vector fields we have $\bdel \tau=H\alpha_1^{0,1} \wedge \ldots \w \alpha_{m}^{0,1}$ where $H$ is some locally defined holomorphic function. Thus $\bdel \tau$ vanishes over $M$, in other words $\tau \in \mathfrak{h}^{0,m-1}(M,g)$. As above (see also the proof of Proposition \ref {norm-form-1}), since $\tau$ has holomorphic coefficients in any local 
basis of $\Lambda^{0,m-1}M$ induced by a local basis in $\mathfrak{h}^{0,1}(M,g)$, it follows that $\tau$ is identically zero. Consequently if $\tau$ is not identically zero the set $\{x \in M: \tau_x=0\}$ has empty interior and the second part of the claim follows.
\end{proof}

As another consequence of  Proposition \ref{prol-1} we obtain an eigenvalue restriction for the Ricci tensor on manifolds with Hermitian Killing forms of
type $(0,m-1)$. More precisely we have
\begin{teo} \label{m-1-new}
Let $(M^{2m},g,J), m \geq 3$, be a connected  K\"ahler manifold. Then either: 
\begin{itemize}
\item[(i)]  $\HK^{0,m-1}(M,g) \cap \ker \di^{\star}=\{0\}$
\item[or]
\item[(ii)] at every point of $M$ the Ricci tensor of $g$ has at most two eigenvalues.
\end{itemize}
\end{teo}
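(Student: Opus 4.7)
The plan is to extract, from the hypothesis, a single pointwise algebraic constraint on the Ricci endomorphism that pins down its spectrum. Pick a non-zero $\tau \in \HK^{0,m-1}(M,g) \cap \ker \di^{\star}$. By Corollary~\ref{zero} the open set $D = \{x \in M : \tau_x \neq 0\}$ is dense in $M$; since having at least three distinct eigenvalues is an open condition on self-adjoint endomorphisms, it is enough to establish the two-eigenvalue property on $D$ and then invoke closure. At any $p \in D$ the distribution $\V := \ker \tau$ is $J$-invariant of complex rank one. The component $\del^{\star}\tau \in \Lambda^{-1,m-1}M$ vanishes automatically by bi-type, so $\di^{\star}\tau = 0$ reduces to $\bdel^{\star}\tau = 0$. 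Plugging this into Proposition~\ref{prol-1}(iii) and extending $\Ric$ $\bb{C}$-linearly to $T^{0,1}M$ (which it preserves because $g$ is K\"ahler) turns (iii) into the pointwise identity
\begin{equation*}
m\, \Ric(Z) \lrcorner \tau \;=\; Z \lrcorner \eta, \qquad \eta := \bdel^{\star}\bdel\tau,
\end{equation*}
valid for every $Z \in T^{0,1}M$.

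Next I would contract this identity against a further $W \in T^{0,1}M$. Anticommutativity of interior products together with the antisymmetry of $Z \lrcorner W \lrcorner \eta$ in $(Z,W)$ yields the skew bilinear relation
\begin{equation*}
\Ric(Z) \lrcorner W \lrcorner \tau + \Ric(W) \lrcorner Z \lrcorner \tau = 0,
\end{equation*}
and specialising to $W = Z$ the quadratic identity
\begin{equation*}
\Ric(Z) \lrcorner (Z \lrcorner \tau) = 0, \qquad \forall Z \in T^{0,1}M.
\end{equation*}
This single relation is what I would use to read off the Ricci spectrum.

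The rest is pointwise linear algebra at $p \in D$. Writing $T_p^{0,1}M = \V^{0,1} \oplus \H^{0,1}$ (complex dimensions $1$ and $m-1$) and noting that $\tau$ restricts to a non-vanishing volume form on $\H^{0,1}$, for any non-zero $Z \in \H^{0,1}$ the kernel of $V \mapsto V \lrcorner (Z \lrcorner \tau)$ inside $T^{0,1}M$ is precisely $\V^{0,1} \oplus \bb{C}Z$. Hence the quadratic identity forces $\Ric(Z) \in \V^{0,1} \oplus \bb{C}Z$ for every such $Z$; applying this on two independent $Z, W \in \H^{0,1}$ and on $Z+W$ (available because $m \geq 3$) isolates a single scalar $\mu = \mu(p)$ with $\Ric(Z) \equiv \mu Z \bmod \V^{0,1}$. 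Feeding $Z \in \V^{0,1}$ into the skew bilinear relation kills its second term (since $Z \lrcorner \tau = 0$) and leaves $\Ric(Z) \lrcorner W \lrcorner \tau = 0$ for all $W \in \H^{0,1}$; intersecting the kernels $\V^{0,1} \oplus \bb{C}W$ over two independent $W$'s then forces $\Ric(Z) \in \V^{0,1}$. Thus $\V$ is $\Ric$-stable, so by self-adjointness of $\Ric$ is $\H$, the $\V^{0,1}$-residue of $\Ric|_{\H^{0,1}}$ drops out, and one ends up with $\Ric|_{\V} = \lambda\,\mathrm{id}$ and $\Ric|_{\H} = \mu\,\mathrm{id}$ at every point of $D$. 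Closedness of the ``at most two distinct eigenvalues of $\Ric$'' locus then propagates alternative (ii) to all of $M$. The main obstacle is the algebraic bookkeeping that extracts this rigid spectral splitting from the single quadratic constraint; once the rigidity is in hand, the extension by continuity and the initial type reductions are routine.
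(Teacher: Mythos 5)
Your proposal is correct and follows essentially the same route as the paper: both hinge on Proposition~\ref{prol-1}(iii) with $\bdel^{\star}\tau=0$ to get $m\,\Ric(X)\lrcorner\tau = X_{0,1}\lrcorner\,\bdel^{\star}\bdel\tau$, then deduce that $\Ric$ preserves $\V=\ker\tau$ and acts by a scalar on $\H$, and finish by the unique continuation/density argument of Corollary~\ref{zero}. The only (cosmetic) difference is in the endgame: the paper contracts with $V\in\V$ to show $C=\tfrac1m\bdel^{\star}\bdel\tau$ lies in the one-dimensional space $\Lambda^{0,m-1}\H$ and hence equals $f\tau$, whereas you symmetrise to the quadratic constraint $\Ric(Z)\lrcorner(Z\lrcorner\tau)=0$ and extract the same spectral splitting by linear algebra.
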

\begin{proof}
Pick a form $\tau \neq 0$ in  $\HK^{0,m-1}(M,g) \cap \ker d^{\star}$. On the open subset of $M$ where $\tau$ does not vanish the distribution 
$\V:=\{X: X \lrcorner \tau=0\} \;$ has real dimension two and is $J$-invariant. 
By (iii) in Proposition \ref{prol-1} we get  $ \Ric X \,\lrcorner \,\, \tau=X_{0,1}\, \lrcorner \, \, C $ for all $X$ in $TM$, where $C:= \frac1m\bdel^{\star} \bdel \tau$ is in $\Lambda^{0,m-1}M$. Contracting again with a vector $V$ in $\V$ shows
that $C$ vanishes on $\V$ as well and thus $\V$ is preserved by $\Ric$. If $\H$ denotes the orthogonal complement of $\V$ in $TM$, both $C$ and $\tau$ belong to $\Lambda^{0,m-1}\H$ which has complex dimension one. 
Thus $C=f \tau$ for some function~$f$, i.e. $\Ric(X) \lrcorner \, \tau = f X_{0,1} \lrcorner \, \tau$.  It follows that $\Ric$ acts on $\H$ by  multiplication with a function, since $\Ric (\H) \subseteq \H$.
Now assume that the open set where the  Ricci tensor has at least three distinct eigenvalues is not empty. 
Then $\tau$ vanishes over this set  and therefor $\tau $ is vanishing everywhere by the argument above and
Corollary~\ref{zero}, thus  contradicting the assumption $\tau \neq 0$. 
\end{proof}
\subsection{Conformal Killing forms in middle dimension}\label{Cm}
According to \cite{AUT}[Definition 5.2] a real valued form $\varphi \in \Lambda^{(1,m-1)+(m-1,1)}M$ is, in the present conventions, a special $m$-form provided there exists a real valued 
form $\tau \in \Lambda^{(0,m-1)+(m-1,0)}M$ such that 
\begin{equation} \label{spec-real}
\nabla_U \varphi=(JU)^{\flat} \w \tau+U^{\flat} \w \bb{J}\tau+(U \lrcorner \, \tau) \w \omega
\end{equation}
for all $U \in TM$. Here the operator $\bb{J}$ acts according to $\bb{J}\tau=\tau(J \cdot, \ldots, \cdot)$. Note that the operator $J$ from \cite{AUT} acts on $\Lambda^{(0,m-1)}M \oplus \Lambda^{(m-1,0)}M$ as $-(m-1)\bb{J}$. Because the r.h.s. in \eqref{spec-real} is primitive so must be $\varphi$, up to parallel forms. Based on this we assume in what follows that $\varphi \in \Lambda_0^{(1,m-1)+(m-1,1)}M$.

When $m \geq 3$ we have a splitting 
$\Lambda_0^{(1,m-1)+(m-1,1)}M \otimes_{\bb{R}} \bb{C}=\Lambda_0^{(1,m-1)}M \oplus \Lambda^{(m-1,1)}_0M$ from which it is easy to deduce, by projection, 
that real-valued special $m$-forms are equivalently described by \eqref{special-i}. When $m=2$ equations \eqref{spec-real} and \eqref{special-i} are inequivalent. 

Therefore, in the rest of this article the aim is to describe K\"ahler manifolds $(M^{2m},g,J)$ of complex dimension $m \geq 3$ admitting pairs $(\varphi, \tau)$ of forms
satisfying \eqref{special-i}. We start with collecting a few immediate consequences of the
definition of special forms. By contraction of Equation~\eqref{special-i} any special form $\varphi$ satisfies 
\begin{equation} \label{eqn1}
\partial \varphi=0, \quad \overline{\partial} \varphi=\frac{i(m+1)}{2}L_{\omega} \tau. 
\end{equation}
Since $\varphi$ is primitive, using the K\"ahler identities it follows that 
\begin{equation} \label{eqn2}
\overline{\partial}^{\star} \varphi=0, \quad  \partial^{\star} \varphi=-\frac{m+1}{2} \tau \ .
\end{equation}
In particular $\tau$ is coclosed
and  the K\"ahler identities imply that $\del \tau$ is primitive.
Furthermore the dual K\"ahler identities yield 
\begin{equation} \label{eqn4}
\di^{\star}\!\di\! \varphi \;=\; \frac{m+1}{2}(\overline{\partial}\tau-\partial \tau) ,\quad
 \ \di\! \di^{\star}\!\varphi \;=\; -\frac{m+1}{2}(\bdel \tau+\partial \tau) ,
\end{equation} 
hence 
\begin{equation} \label{eqn5}
\Delta \varphi \;=\; -(m+1) \partial \tau. 
\end{equation}

Obviously any conformal Killing form is a Hermitian twistor form. Moreover it follows from \cite{AUT} that
primitive conformal Killing forms of type $(1,m-1)$ are precisely special forms as defined above.
To go on further the following algebraic lemma is needed.
\begin{lema} \label{ric-a} 
For any vector field $X$ and any form $\varphi$ in $\Lambda^{1,m-1}_0M$ we have 
$$
2(\Ric(X) \lrcorner \, \varphi)_{\Lambda^{0,m-1}M} \;=\; X_{1,0}\, \lrcorner \,\, (\Ric(\varphi) \;+\; i \, [\rho, \varphi])  \ .
$$
\end{lema}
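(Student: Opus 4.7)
The identity is purely algebraic, so I verify it pointwise in a local orthonormal frame $\{e_i\}$. The plan is to rewrite $\Ric(\varphi)+i[\rho,\varphi]$ in a type-adapted form and then contract by $X_{1,0}$, disposing of the resulting terms via the Leibniz rule and type considerations. Starting from the identity $A^{\flat}+i(JA)^{\flat}=2A^{1,0}$, valid for any real vector $A$, and applying it with $A=\Ric e_i$ together with the K\"ahler property $[\Ric,J]=0$, the defining formulas for $\Ric(\varphi)$ and $[\rho,\varphi]$ combine to give
\begin{equation*}
\Ric(\varphi)+i\,[\rho,\varphi]\;=\;2\sum_i (\Ric e_i)^{1,0}\wedge (e_i \lrcorner \, \varphi).
\end{equation*}

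Contracting with $X_{1,0}$ and using the Leibniz rule together with the elementary identification $X_{1,0}\lrcorner \,(\Ric e_i)^{1,0}=(\Ric e_i)^{1,0}(X_{1,0})=g(\Ric e_i,X_{1,0})$, the right hand side of the claim splits into two parts. The first is $2\sum_i g(\Ric e_i,X_{1,0})(e_i\lrcorner \, \varphi)$; by $\bb{C}$-bilinearity of the inner product and of the interior product it equals $2\,(\Ric X_{1,0})\lrcorner \, \varphi=2\,(\Ric X)_{1,0}\lrcorner \, \varphi$, which by \eqref{proj} is precisely $2\,(\Ric X\lrcorner \, \varphi)_{\Lambda^{0,m-1}M}$.

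The second part is $-2\sum_i (\Ric e_i)^{1,0}\wedge (X_{1,0}\lrcorner \, e_i \lrcorner \, \varphi)$. Since $\varphi\in\Lambda^{1,m-1}M$, the form $X_{1,0}\lrcorner \, \varphi$ has pure type $(0,m-1)$, so anticommutativity of interior products together with $X_{1,0}\lrcorner \,\Lambda^{0,\star}M=0$ shows that only the $(0,1)$-component of $e_i$ contributes to the outer contraction. Thus this part reduces to $2\sum_i (\Ric e_i)^{1,0}\wedge ((e_i)_{0,1}\lrcorner \, \psi)$ with $\psi=X_{1,0}\lrcorner \, \varphi$, and what remains is to show that this last sum vanishes for every $\psi\in\Lambda^{0,m-1}M$. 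I plan to prove this by pairing contributions in a $J$-adapted orthonormal frame with $e_{m+a}=Je_a$: the relations $(JV)^{1,0}=-iV^{1,0}$ and $(JW)_{0,1}=-iW_{0,1}$, together with $[\Ric,J]=0$, force the $e_a$ and $Je_a$ contributions to differ by a factor of $(-i)^2=-1$, so they cancel pairwise.

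Summing the two parts yields $2\,(\Ric X\lrcorner \, \varphi)_{\Lambda^{0,m-1}M}$, establishing the lemma. The one delicate point is the final frame cancellation, where one must track carefully both the $\bb{C}$-linear extensions of the interior product and of the inner product and the signs produced when commuting $X_{1,0}$ past $e_i$; the rest of the argument is routine bookkeeping with the type decomposition.
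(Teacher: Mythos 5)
Your argument is correct, and it reaches the identity by a different route than the paper's three\mbox{-}line proof. You expand the right-hand side first, via $\Ric(\varphi)+i[\rho,\varphi]=2\sum_i(\Ric e_i)^{1,0}\wedge(e_i\lrcorner\,\varphi)$ (which is right: $e_i\lrcorner\,\rho=(J\Ric e_i)^{\flat}$, so the two sums combine into $2(\Ric e_i)^{1,0}$ in the paper's convention $A^{1,0}=\tfrac12(A^{\flat}+i(JA)^{\flat})$), then contract with $X_{1,0}$ and dispose of the Leibniz remainder $\sum_i(\Ric e_i)^{1,0}\wedge((e_i)_{0,1}\lrcorner\,\psi)$ with $\psi=X_{1,0}\lrcorner\,\varphi\in\Lambda^{0,m-1}M$ by pairwise cancellation in a $J$-adapted frame. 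That cancellation does go through exactly as you indicate: it uses only $[\Ric,J]=0$ together with $(JV)^{1,0}=-iV^{1,0}$ and $(JW)_{0,1}=-iW_{0,1}$, so the $e_a$ and $Je_a$ contributions differ by $(-i)^2=-1$. The paper instead starts from the left-hand side: it observes that $\gamma(X):=(\Ric X)_{1,0}\lrcorner\,\varphi$ satisfies $\gamma(JX)=i\gamma(X)$, hence must be of the form $X_{1,0}\lrcorner\, q$ for a (unique) $q\in\Lambda^{1,m-1}M$, and then recovers $q$ by a single trace, $2q=\Ric(\varphi)+i[\rho,\varphi]$. The representation step is precisely what replaces your cancellation: complex linearity of $\gamma$ guarantees in advance that no spurious component can survive the contraction, so only the computation of $q$ remains. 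Your version is a direct verification with every sign made explicit; the paper's is shorter because the type bookkeeping is absorbed into a linear-algebra statement. Both are complete proofs, and neither uses primitivity of $\varphi$.
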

\begin{proof}Let $\gamma: TM \to \Lambda^{0,m-1}M, \gamma(X)=(\Ric X \lrcorner \, \varphi)_{\Lambda^{0,m-1}M}=(\Ric X)_{1,0} \lrcorner \, \varphi$. Then 
$\gamma(JX)=i\gamma(X)$ for all $X$ in $TM$. Hence we can write $\gamma(X)=(X \lrcorner \, q)_{\Lambda^{0,m-1}M}$ for some $q$ in $\Lambda^{1,m-1}M$. After an elementary contraction $2q=\Ric(\varphi)+i[\rho, \varphi]$ and the claim is proved.
\end{proof}

The proposition below gives a relation  between conformal Killing forms in middle dimension and Hermitian Killing forms. We also  derive some of the additional consequences imposed by this set-up.

\begin{pro} \label{pro1}
Let $\varphi$  be a conformal Killing form in $\;\Lambda^{1,m-1}_0M$. The following hold
\begin{itemize}
\item[(i)]  $\tau =- \frac{2}{m+1} \del^*\varphi  \in   \HK^{0,m-1}(M,g) \cap \ker \di^{\star}$
\item[(ii)] $[\rho, \varphi]=i \, \partial \tau $
\item[(iii)] $\Ric(\partial \tau)=\frac{\s}{2} \del \tau $
\item[(iv)] we have 
\begin{equation} \label{ldt}
\Delta_{\bdel} \tau=\frac{m}{m-1}\Ric(\tau).
\end{equation}
\end{itemize}
\end{pro}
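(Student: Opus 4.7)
\emph{Proof plan.} All four parts follow by extracting consequences of \eqref{special-i} through the identities \eqref{eqn1}--\eqref{eqn5}, Proposition \ref{der12}, and the Weitzenb\"ock formula \eqref{lap}. A key preliminary, obtained by combining \eqref{eqn5} and \eqref{lap}, is
\begin{equation*}
\mathfrak{R}\varphi + \Ric(\varphi) = -m\,\del\tau, \qquad (\ast)
\end{equation*}
which will feed all subsequent curvature manipulations.

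For part (i), the formula $\tau = -\tfrac{2}{m+1}\del^{\star}\varphi$ is a restatement of \eqref{eqn2}, and $d^{\star}\tau = 0$ is immediate: $\del^{\star}\tau = 0$ by bidegree and $\bdel^{\star}\tau = \tfrac{2}{m+1}\del^{\star}\bdel^{\star}\varphi = 0$ using $\bdel^{\star}\varphi = 0$. For the Hermitian Killing property, I would apply Proposition \ref{der12}(i) with $p = m$ to $\varphi$, substitute $d^{\star}\varphi$ and $d^{\star}d\varphi$ from \eqref{eqn2} and \eqref{eqn4}, and rewrite the right-hand side using Lemma \ref{ric-a}, the vanishing of $\mathfrak{R}$ on $\Lambda^{0,\star}M$, and $(\ast)$. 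Projecting onto the $\Lambda^{0,m-1}M$-component yields
\begin{equation*}
\nabla_X \tau = \tfrac{1}{m} X_{0,1} \lrcorner \bdel\tau + X_{1,0}\lrcorner \del\tau - \tfrac{1}{m} X_{1,0}\lrcorner (\del\tau + i[\rho,\varphi]) \qquad (\diamond)
\end{equation*}
for every real vector field $X$.

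To extract (ii) from $(\diamond)$ I substitute into $\del\tau = \sum_k (e_k)^{1,0}\wedge \nabla_{e_k}\tau$; direct computation in a unitary frame shows that $\sum_k (e_k)^{1,0}\wedge (e_k)_{0,1}\lrcorner \bdel\tau = 0$ and $\sum_k (e_k)^{1,0}\wedge (e_k)_{1,0}\lrcorner \del\tau = \del\tau$ (the standard $(1,0)$-degree count), while the residual contributes $-\tfrac{1}{m}(\del\tau + i[\rho,\varphi])$. Comparing the two sides forces $\del\tau + i[\rho,\varphi] = 0$, which is (ii); feeding this back into $(\diamond)$ simultaneously yields the Hermitian Killing equation of (i).

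For (iv), with (i) in hand Proposition \ref{prol-1}(iv) gives $\del^{\star}\del\tau = \Ric(\tau) + \tfrac{1}{m}\bdel^{\star}\bdel\tau$; the K\"ahler identity $\Delta_{\del} = \Delta_{\bdel}$ combined with $\del^{\star}\tau = \bdel^{\star}\tau = 0$ forces $\del^{\star}\del\tau = \bdel^{\star}\bdel\tau = \Delta_{\bdel}\tau$, and substituting produces $\tfrac{m-1}{m}\Delta_{\bdel}\tau = \Ric(\tau)$. For (iii), Proposition \ref{prol-1}(i) combined with (i) gives $\bdel\tau \in \mathfrak{h}^{0,m}(M,g)$ with $\Delta(\bdel\tau) = \s\,\bdel\tau$; using the K\"ahler commutation $[\Delta_{\bdel},\del] = 0$ applied to \eqref{ldt} produces $\Delta_{\bdel}\del\tau = \tfrac{m}{m-1}\del\Ric(\tau)$, and comparing with the Weitzenb\"ock formula \eqref{wbf} on $\del\tau \in \Lambda^{1,m-1}M$, using (ii) to rewrite the curvature endomorphism $-i[\rho,\del\tau]$, isolates $\Ric(\del\tau) = \tfrac{\s}{2}\del\tau$. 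The main obstacle throughout is the type bookkeeping in the passage from $(\diamond)$ to (ii).
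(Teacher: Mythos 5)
Parts (i), (ii) and (iv) of your proposal are correct and follow essentially the paper's own route: your identity $(\diamond)$ is the paper's equation \eqref{eqn6} with \eqref{relfitau} already substituted, the frame summation $\del\tau=\sum_k(e_k)^{1,0}\wedge\nabla_{e_k}\tau$ is just the identification of the $\nabla^{1,0}$-part of $(\diamond)$ with $\del\tau$ that the paper uses to extract (ii), and your argument for (iv) is verbatim the one in the paper (the reordering of (iii) and (iv) is harmless, since the paper's proof of (iv) does not use (iii)).

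Part (iii) has a genuine gap. The chain $\Delta_{\bdel}\del\tau=\del\,\Delta_{\bdel}\tau=\tfrac{m}{m-1}\del(\Ric(\tau))$ produces the term $\del(\Ric(\tau))$, which contains derivatives of the Ricci tensor and cannot be converted into the purely algebraic quantity $\Ric(\del\tau)$; on the other side, \eqref{wbf} applied to $\del\tau$ introduces $(\nabla^{0,1})^{*}\nabla^{0,1}\del\tau$ together with $-i[\rho,\del\tau]$, and statement (ii) — which relates $[\rho,\varphi]$ to $\del\tau$ — gives no information whatsoever about $[\rho,\del\tau]$, so there is nothing to ``rewrite''; note also that $-i[\rho,\cdot]$ agrees with $\Ric$ only on $\Lambda^{0,\star}M$, not on $\Lambda^{1,m-1}M$ (compare Lemma \ref{prim}). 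More fundamentally, every input you feed into (iii) (Proposition \ref{prol-1}, \eqref{wbf}, \eqref{ldt}) is already a consequence of $\tau$ being a co-closed Hermitian Killing form, whereas $\Ric(\del\tau)=\tfrac{\s}{2}\del\tau$ is an additional integrability condition — in the two-eigenvalue picture of Lemma \ref{prim},(i) it is precisely the vanishing of the component $(\del\tau)_1$, cf. Proposition \ref{step2} — which requires a further second-order consequence of the conformal Killing equation for $\varphi$. The paper obtains it from the obstruction \eqref{obs}, $\di\,\r\,\di\varphi=0$: since $\di\varphi=\tfrac{i(m+1)}{2}L_{\omega}\tau$ by \eqref{eqn1} and $\r(\omega\wedge\tau)=-2\rho\wedge\tau$ by \eqref{cc1}, one gets $\rho\wedge\di\tau=0$, hence $\rho\wedge\del\tau=0$ by type, and applying $L^{\star}_{\omega}$ with \eqref{ls} and the primitivity of $\del\tau$ yields (iii). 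You should replace your argument for (iii) by this one.
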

\begin{proof} 
Since $\varphi$ is a conformal Killing $m$-form it satisfies by (i) of Proposition \ref{der12}:
$$
 \frac{m}{m+1}\nabla_X(\di^{\star}\!\varphi)+\frac{1}{m+1}X \lrcorner \, \di^{\star}\!\di\varphi
 \;=\;
 \Ric X \lrcorner \, \varphi \;+\;  \frac{1}{2}( X \lrcorner \, \r \varphi-\r (X \lrcorner \, \varphi))  $$
for all vector fields $X$. Replacing therein the expressions of $\di^{\star}\! \varphi$ and $\di^{\star}\!\di\!\varphi$ as indicated in equations  \eqref{eqn1} and \eqref{eqn4} yields 
\begin{equation*}
m \nabla_X \tau=X \lrcorner \, (-\r \varphi+\bdel \tau-\partial \tau)-2 \Ric X \lrcorner \, \varphi+\r (X \lrcorner \, \varphi) 
\end{equation*} 
for all vector fields $X$. Recall now that the curvature tensor $R$ of the metric $g$ is a section of  
$ \Lambda^{1,1} \otimes \Lambda^{1,1}$, implying that 
$\r$ preserves the bi-type decomposition and that $\r (\Lambda^{0,\star})=0$. Therefore the components of the
equation above on $\Lambda^{0,m-1}M$ respectively $\Lambda^{1,m-2}M$ are 
\begin{equation*}
\begin{split}
m\nabla_X \tau=&(X \lrcorner \, (-\r \varphi+\overline{\partial} \tau-\partial \tau))_{\Lambda^{0,m-1}M}-2(\Ric X \lrcorner \, \varphi)_{\Lambda^{0,m-1}M}  \\
\end{split}
\end{equation*}
as well as 
\begin{equation} \label{proj2}
\begin{split}
0=&-(X \lrcorner \, (\r \varphi+\del \tau))_{\Lambda^{1,m-2}M}-2(\Ric X \lrcorner \, \varphi)_{\Lambda^{1,m-2}M}+\r(X \lrcorner \, \varphi) 
\end{split}
\end{equation}
for all $X$ in $TM$. Using \eqref{proj} the first equation simplifies to
\begin{equation} \label{eqn6}
m\nabla_X\tau=-X_{1,0} \lrcorner \, (\r \varphi+\partial \tau+\Ric(\varphi)+i [\rho, \varphi])+X \lrcorner \, \bdel \tau 
\end{equation}
for all $X$ in $TM$, after using Lemma \ref{ric-a} as well. \\
(i) follows now from the definitions and \eqref{eqn6}.\\
(ii) since $\nabla^{1,0}=\del$ on $\Lambda^{0,m-1}M$ it follows from \eqref{eqn6} that 
$$ \r \varphi+\partial \tau+\Ric(\varphi)+i[\rho,\varphi]=-m\partial \tau. 
$$
However, since $\varphi$ is a conformal Killing $m$-form we have by \eqref{lap} and \eqref{eqn5} that 
\begin{equation} \label{relfitau}
\r \varphi+\Ric(\varphi)=\frac{m}{m+1}\Delta \varphi=-m\partial \tau 
\end{equation}
and the claim follows. \\
(iii) by \eqref{obs} we have $\di\! \r\!\di\!\varphi=0$. Since $\di\!\varphi=\frac{i(m+1)}{2}L_{\omega} \tau$ by \eqref{eqn1} and $\r (\omega \wedge \tau)=-2 \rho \wedge \tau$ by \eqref{cc1}
we obtain that $\rho \wedge \di\!\tau=0$. Therefore $\rho \wedge \partial \tau=0$ by type considerations. Applying $L^{\star}_{\omega}$ while using \eqref{ls} proves the claim 
since $\partial \tau$ is primitive.\\
(iv) since $\di^{\star}\! \tau=0$ we have $\Delta_{\del}\tau=\del^{\star} \del \tau$ and $\Delta_{\bdel}\tau=\bdel^{\star} \bdel \tau$. The claim follows from the equality of the Laplacians $\Delta_{\del}$ 
and $\Delta_{\bdel}$ combined with (iv) in Proposition \ref{prol-1}.
\end{proof}

%

%
%
By combining Proposition~\ref{pro1},(i)  and Theorem~\ref{m-1-new}  we obtain
\begin{coro} \label{coroL22}
Let $(M^{2m}, g,J), m \geq 3$, be a K\"ahler manifold  admitting a non-parallel conformal Killing form in 
$\Lambda^{1,m-1}_0M$. Then at every point of $M$ the Ricci tensor of 
$g$ has at most two eigenvalues.
\end{coro}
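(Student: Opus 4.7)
The plan is to invoke the two main preceding results directly. First I would set up the hypotheses: let $\varphi \in \Lambda^{1,m-1}_0 M$ be a non-parallel conformal Killing form, and let $\tau := -\tfrac{2}{m+1}\del^{\star}\varphi$ be the companion $(0,m-1)$-form. Since primitive conformal Killing forms of type $(1,m-1)$ are exactly the (complexifications of) special forms in the sense of \cite{AUT}, $\varphi$ satisfies the defining identity \eqref{special-i}
\begin{equation*}
\nabla_X \varphi = X^{1,0}\w\tau + \tfrac{i}{2}\,\omega\w(X\lrcorner\tau)
\end{equation*}
for all $X\in TM$. By Proposition~\ref{pro1}(i) we already have $\tau\in \HK^{0,m-1}(M,g)\cap\ker\di^{\star}$, so the mechanism of Theorem~\ref{m-1-new} is in place.

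The only real content of the argument is to rule out the trivial alternative in Theorem~\ref{m-1-new}, namely $\tau\equiv 0$. But this is immediate: if $\tau$ vanishes identically on $M$, then the right-hand side of \eqref{special-i} is identically zero, so $\nabla\varphi = 0$, contradicting the non-parallel hypothesis on $\varphi$. Hence $\tau$ is a non-zero element of $\HK^{0,m-1}(M,g)\cap\ker\di^{\star}$, which forces us into alternative (ii) of Theorem~\ref{m-1-new}: at every point of $M$, the Ricci tensor of $g$ has at most two eigenvalues.

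There is essentially no obstacle here; the corollary is a formal combination, and the only thing worth emphasising is that the non-parallelism of $\varphi$ is used solely to guarantee $\tau\not\equiv 0$ through \eqref{special-i}, after which Theorem~\ref{m-1-new} (whose proof already used the strong unique continuation of Corollary~\ref{zero} to propagate the two-eigenvalue condition globally) delivers the conclusion without further work.
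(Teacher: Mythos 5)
Your argument is correct and is exactly the paper's intended proof: the corollary is stated there as an immediate combination of Proposition~\ref{pro1},(i) and Theorem~\ref{m-1-new}, and your observation that non-parallelism of $\varphi$ forces $\tau\not\equiv 0$ via \eqref{special-i} is precisely the (implicit) step needed to land in alternative (ii) of the dichotomy.
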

We conclude this section with the following observation which will play a crucial role in obtaining geometric integrability conditions in section \ref{dif-str}.

\begin{lema} \label{sscal}
Let $(\varphi, \tau)$ solve \eqref{special-i}. Assuming that 
$\Ric(\varphi)=\frac{\s}{2}\varphi$
we have 
\begin{equation*}
J \grad (\s) \, \lrcorner \,\, \varphi \;=\; i \,\tfrac{m-3}{2} \, \s \, \tau\;+\; \, i\, \tfrac{2(2m-1)}{m-1}\, \Ric(\tau).
\end{equation*}
\end{lema}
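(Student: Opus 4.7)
The plan is to apply $\di$ to the identity $\r\varphi = -m\del\tau - \tfrac{\s}{2}\varphi$, which follows from the hypothesis $\Ric(\varphi) = \tfrac{\s}{2}\varphi$ combined with \eqref{relfitau}, and then to compare with the value of $\di(\r\varphi)$ furnished by the Bianchi-type identity in Proposition~\ref{der12}(ii). For the latter, inserting $\di\varphi = \frac{i(m+1)}{2}\omega\wedge\tau$ from \eqref{eqn1} together with \eqref{cc1} and $\r\tau = 0$ yields $\di(\r\varphi) = -i(m-1)\rho\wedge\tau$. Differentiating $\r\varphi = -m\del\tau - \tfrac{\s}{2}\varphi$ directly and using $\di\del\tau = -\del\bdel\tau$ gives
\begin{equation*}
\di(\r\varphi) \;=\; m\del\bdel\tau \;-\; \tfrac{1}{2}d\s\wedge\varphi \;-\; \tfrac{i(m+1)\s}{4}\,\omega\wedge\tau.
\end{equation*}

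Equating the two expressions and splitting into bi-types, the $\Lambda^{2,m-1}$-component produces the auxiliary identity $\del\s\wedge\varphi = 0$, while the $\Lambda^{1,m}$-component produces
\begin{equation*}
\bdel\s\wedge\varphi \;=\; 2m\,\del\bdel\tau \;-\; \tfrac{i(m+1)\s}{2}\,\omega\wedge\tau \;+\; 2i(m-1)\,\rho\wedge\tau. \qquad (\star)
\end{equation*}
I now apply $L_\omega^*$ to $(\star)$. The K\"ahler identities \eqref{ki} imply, for any real function $f$, the commutator relations $[L_\omega^*,L_{\bdel f}] = i(\grad f)_{1,0}\lrcorner(\cdot)$ and $[L_\omega^*,L_{\del f}] = -i(\grad f)_{0,1}\lrcorner(\cdot)$. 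Since $L_\omega^*\varphi = 0$ by primitivity and $(J\grad\s)_{1,0} = i(\grad\s)_{1,0}$, these yield $L_\omega^*(\bdel\s\wedge\varphi) = (J\grad\s)_{1,0}\lrcorner\varphi$ and, via the auxiliary identity, $(J\grad\s)_{0,1}\lrcorner\varphi = -L_\omega^*(\del\s\wedge\varphi) = 0$. Hence $J\grad\s\lrcorner\varphi$ lies entirely in $\Lambda^{0,m-1}$ and equals $L_\omega^*(\bdel\s\wedge\varphi)$.

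The right-hand side of $(\star)$ is then simplified using standard consequences of the K\"ahler identities: $L_\omega^*\del\bdel\tau = i\bdel^*\bdel\tau$ (since $L_\omega^*\bdel\tau = 0$ for bi-type reasons and $[L_\omega^*,\del]=i\bdel^*$), $L_\omega^*(\omega\wedge\tau) = \tau$ from the Lefschetz relation combined with $L_\omega^*\tau = 0$, and $L_\omega^*(\rho\wedge\tau) = \tfrac{\s}{2}\tau - \Ric(\tau)$ from \eqref{ls}. Using $\bdel^*\tau = 0$ (type), equation \eqref{ldt} reads $\bdel^*\bdel\tau = \tfrac{m}{m-1}\Ric(\tau)$; substituting and collecting the $\s\tau$ and $\Ric(\tau)$ coefficients produces the stated formula. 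The main subtlety in this argument is tracking the bi-type projections: in particular, the auxiliary identity $\del\s\wedge\varphi = 0$ is indispensable, for without it $J\grad\s\lrcorner\varphi$ would generally carry a nonvanishing $\Lambda^{1,m-2}$-component and the claimed equality would not even be well-typed.
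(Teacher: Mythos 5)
Your proof is correct and follows essentially the same route as the paper: both differentiate $\r \varphi = -\tfrac{\s}{2}\varphi - m\del\tau$, compare with $\di(\r\varphi)=\tfrac{m-1}{m+1}\r(\di\varphi)$ via \eqref{eqn1} and \eqref{cc1}, and then contract with $L^{\star}_{\omega}$ using \eqref{ls}, \eqref{ldt} and the K\"ahler identities. The only difference is presentational: you split by bi-type first to isolate $\del\s\w\varphi=0$ explicitly, whereas the paper applies $L^{\star}_{\omega}$ directly to the combined identity \eqref{temp-1}, which yields the same information since all remaining terms there are of type $(1,m)$.
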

\begin{proof}
We analyse the curvature-type constraint $\di( \mathfrak{R} \varphi)=\frac{m-1}{m+1}\, \mathfrak{R}(\di \varphi)$ which holds by Proposition \ref{der12},(ii)
since $\varphi$ is a conformal Killing $m$-form. Because 
$\Ric(\varphi)=\frac{\s}{2} \varphi$ by assumption and recalling that $\mathfrak{R} \varphi=- \Ric(\varphi)-m \del \tau$ (see \eqref{relfitau}), we obtain
$$ 
\mathfrak{R} \varphi \;=\; - \, \tfrac{\s}{2} \, \varphi-m \, \del \tau.
$$
We have  $\di\! \varphi=\frac{i}{2}(m+1)\, L_{\omega} \tau$ by \eqref{eqn1}, thus computing in both sides of the curvature constraint and  taking into account \eqref{cc1}, leads to 
\begin{equation} \label{temp-1}
\tfrac12\di \s \w \varphi \;+\; \tfrac{i}{4} \,  \, \s \, (m+1) \, L_{\omega}\tau +  m \, \bdel \del \tau
=\; -\,  \tfrac{i}{2} \, (m-1)\, \mathfrak{R}L_{\omega} \tau
=\;   i \, (m-1) \, L_\rho \,\tau \ .
\end{equation}
Now note that  \eqref{ls} and a simple computation, using that $\varphi$ is primitive, yield 
\begin{equation*}
\begin{split}
L^{\star}_{\omega} L_\rho \, \tau =  \tfrac{\s}{2} \,  \tau-\Ric(\tau) \qquad \mbox{and} \qquad  L^{\star}_{\omega}(\di \s \w \varphi)=
J \grad (\s) \, \lrcorner \, \,  \varphi \ .
\end{split}
\end{equation*}
Moreover from the K\"ahler identities for the co-closed form $\tau$ as well as \eqref{ldt} it follows that
$$ 
L^{\star}_{\omega}(\bdel \del \tau ) \;=\; -i \, \Delta_{\bdel} \tau=- \, i \, \tfrac{m}{m-1} \, \Ric(\tau)  \ .
$$
Gathering these facts proves the claim after applying $L^{\star}_{\omega}$
in \eqref{temp-1}, whilst taking into account that $L^{\star}_{\omega}L_{\omega}\tau=\tau$.
\end{proof}

\section{The Einstein set-up} \label{Einstein}
First consequences of Lemma \ref{sscal} include the full description of K\"ahler-Einstein metrics that admit special $m$-forms.
In what follows  we let $(M^{2m},g,J), m \geq 3$, be a connected K\"ahler-Einstein manifold equipped with a pair $(\varphi,\tau)$ solving \eqref{special-i}. We consider the length function $p=\vert \tau \vert^2$ and the vector field $K = -J\, \grad \,p$.
\begin{pro} \label{Ein-1} Assume that $\varphi$ is not parallel, i.e. $\tau \neq 0$. Then
\begin{itemize}
\item[(i)] $\Ric=0$ and $\del \tau=0$ 
\item[(ii)] $\nabla (\bar\del\tau)  = 0$, in particular  there exists a constant $\,k \geq 0\,$ such that $\;\vert \bdel \tau \vert ^2=k$ 
\item[(iii)] $\nabla K=-\frac{k}{m^2}J$, in particular $K \in \aut(M,g,J)$
\item[(iv)] we have $\vert \di\!p \vert^2=\frac{2k}{m^2}p$ and $\tau=\frac{m}{k} JK \lrcorner \, \bdel \tau$ when $k >0$
\item[(v)] we have $\li_{K}\! \tau=ik_1\tau$ with $k_1=\frac{k}{m} \geq 0$.
\end{itemize}
\end{pro}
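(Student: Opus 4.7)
The plan is to derive (i) via integrability, use the resulting Ricci-flatness to upgrade the algebraic prolongation of Proposition \ref{prol-1} to genuine parallelism in (ii), and then read off (iii)--(v) by explicit computation. For (i), the starting point is Proposition \ref{pro1}(ii), namely $[\rho,\varphi]=i\del\tau$. In the Einstein case $\rho=\tfrac{\s}{2m}\omega$, and since $\varphi\in\Lambda^{1,m-1}_0M$ one has $[\omega,\varphi]=i(m-2)\varphi$, hence $\del\tau=\tfrac{\s(m-2)}{2m}\varphi$. The hypothesis $\Ric(\varphi)=\tfrac{\s}{2}\varphi$ of Lemma \ref{sscal} holds automatically; since $\grad\s=0$ and $\Ric(\tau)=\tfrac{\s(m-1)}{2m}\tau$, its conclusion simplifies to $0 = i\,\s\,\tau\,\tfrac{(m+2)(m-1)}{2m}$. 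The coefficient is non-zero for $m\geq 3$, so $\s\,\tau\equiv 0$. Since $\tau\in\HK^{0,m-1}(M,g)$ is non-zero (because $\varphi$ is not parallel), Corollary \ref{zero} gives that $\tau$ is non-vanishing on a dense open subset, so connectedness forces $\s=0$ throughout; hence $\Ric=0$ and $\del\tau=0$.

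For (ii), Proposition \ref{prol-1}(i) already provides $\nabla^{0,1}\bdel\tau=0$. The remaining component vanishes because $\del(\bdel\tau)=-\bdel(\del\tau)=0$ by (i), combined with the fact that on $\Lambda^{0,\star}M$ the operator $\nabla^{1,0}$ is, via the wedge-isomorphism $T^{\ast 1,0}M\otimes\Lambda^{0,m}M\cong\Lambda^{1,m}M$, injectively identified with $\del$. Thus $\nabla\bdel\tau=0$, and $k:=|\bdel\tau|^2$ is a constant.

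For (iii)--(iv), the key observation is that, when $k>0$, the parallel form $\bdel\tau$ is a nowhere-vanishing section of the line bundle $\Lambda^{0,m}M$, so interior multiplication realises $T^{0,1}M\to\Lambda^{0,m-1}M$, $V\mapsto V\lrcorner\bdel\tau$, as a pointwise isometry up to a factor $\sqrt{k}$. There is consequently a unique real vector field $W$ with $\tau=W_{0,1}\lrcorner\bdel\tau$, and $|\tau|^2=\tfrac{k}{2}|W|^2$. The Hermitian Killing equation, reduced by $\del\tau=0$ to $\nabla_X\tau=\tfrac{1}{m}X_{0,1}\lrcorner\bdel\tau$, produces
\begin{equation*}
dp(X)\;=\;\tfrac{2}{m}\Re h(X_{0,1}\lrcorner\bdel\tau,\,W_{0,1}\lrcorner\bdel\tau)\;=\;\tfrac{k}{m}g(X,W),
\end{equation*}
so $W=\tfrac{m}{k}\grad p=\tfrac{m}{k}JK$; this yields both $\tau=\tfrac{m}{k}JK\lrcorner\bdel\tau$ and $|dp|^2=\tfrac{k^2}{m^2}|W|^2=\tfrac{2k}{m^2}p$. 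Differentiating $dp$ once more and using $\nabla\bdel\tau=0$ then gives $\nabla^2 p=\tfrac{k}{m^2}g$, which is equivalent to $\nabla K=-\tfrac{k}{m^2}J$; in particular $K\in\aut(M,g,J)$.

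For (v), the standard Lie-derivative formula $\li_K\alpha=\nabla_K\alpha+A_K\cdot\alpha$ along a Killing field $K$, with $A_K(X)=\nabla_XK$ acting on forms by derivation, combines with $A_K=-\tfrac{k}{m^2}J$ and the fact that the derivation action of $J$ on $\Lambda^{0,m-1}M$ is multiplication by $-i(m-1)$. Using $K_{0,1}=i(\grad p)_{0,1}$ together with (iv) one finds $\nabla_K\tau=\tfrac{ik}{m^2}\tau$, so that $\li_K\tau=\tfrac{ik}{m^2}\tau+\tfrac{ik(m-1)}{m^2}\tau=\tfrac{ik}{m}\tau$, which is (v) with $k_1=k/m\geq 0$. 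The main obstacle is the bookkeeping in (iii)--(iv): fixing the correct constants in both the identification $\tau=W_{0,1}\lrcorner\bdel\tau$ and the Hermitian pairing of two contractions of $\bdel\tau$. Once these normalisations are settled, the rest of the proposition follows formally from $\nabla\bdel\tau=0$ and standard derivation identities on $(0,q)$-forms.
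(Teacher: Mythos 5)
Your proposal is correct and follows essentially the same route as the paper: Lemma \ref{sscal} with $\grad\s=0$ for (i), Proposition \ref{prol-1},(i) plus $\del\bdel=-\bdel\del$ for (ii), the algebraic identity for the parallel volume form $\bdel\tau$ and the reduced Hermitian Killing equation $\nabla_X\tau=\tfrac{1}{m}X_{0,1}\lrcorner\bdel\tau$ for (iii)--(iv), and a standard Lie-derivative computation for (v), with all constants matching (the paper uses Cartan's formula in (v) where you use the covariant one, a cosmetic difference). The only omission is the trivial remark that (iii) also holds when $k=0$, where $\tau$ is parallel and $K=0$.
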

\begin{proof}
(i) since the metric $g$ is Einstein, $\varphi \in \ker(\Ric-\frac{\s}{2})$ thus from Lemma~\ref{sscal} we obtain
$ 0 = \frac{m^2 + m -2}{2m} \, \s \, \tau$ due to $\grad(\s)=0$ and $\Ric(\tau) = \frac{m-1}{2m}\s \tau$.
It follows that  $\s \, \tau=0$, thus $g$ is Ricci flat, since $\tau$ does not vanish identically on $M$ by assumption. Moreover,
$\del \tau = 0$ by Proposition \ref{pro1},(ii).\\
(ii) the form $\bar\del\tau$ belongs to $\mathfrak{h}^{0,m}(M,g)$ by Proposition \ref{prol-1},(i), i.e. $\nabla^{0,1}\bar\del\tau = 0$. Since 
$\nabla^{1,0}=\del$ on $\Lambda^{0,\star}M$
and $\del\tau = 0 $ by (i), it follows that 
$\nabla^{1,0}\bar\del\tau = \del\bar\del \tau =  - \bar\del \del \tau =0$. Hence $\bar\del \tau$ is parallel and thus has constant length.\\
(iii) and (iv) are proved at the same time. Because $\del \tau=0$ the Hermitian Killing equation for $\tau$ reads $\nabla \tau=\frac{1}{m}\bdel \tau$. 
When $k=0$ it follows that $\tau$ is parallel hence $\di\! p=0$ and $K=0$ and both statements are trivially satisfied. \\
Assume thus that $k>0$. Then $\bdel \tau$ is a complex volume from, in particular it satisfies the algebraic identity 
\begin{equation} \label{vff}
\langle X_2\, \lrcorner \,\, \bdel \tau, X_1\, \lrcorner \,\, \bdel \tau \rangle + \langle X_1 \,\lrcorner \,\, \bdel \tau, X_2 \,\lrcorner \,\, \bdel \tau \rangle =k g(X_1,X_2) 
\end{equation}
for all $X_1,X_2$ in $TM$. Let the real valued vector field $\z$ be determined from $ \tau=\z \,\lrcorner \,\, \bdel \tau$. Because $\bdel \tau$ is parallel, the Hermitian-Killing equation for $\tau$ becomes $\nabla \z=\frac{1}{m} 1_{TM}$. It follows that 
\begin{equation*} 
\begin{split}
\di\!p(X)=&\langle \nabla_X \tau, \tau \rangle \,+\, \langle \tau, \nabla_X \tau \rangle=
\tfrac{1}{m} (\langle X \,\lrcorner \,\, \bdel \tau, \tau \rangle \,+\, \langle \tau, X\, \lrcorner \,\, \bdel \tau \rangle)\\
=&\tfrac{1}{m} (\langle X \,\lrcorner \, \bdel \tau, \z \lrcorner \, \bdel \tau \rangle + \langle \z \lrcorner \, \bdel \tau, X\, \lrcorner \,\, \bdel \tau \rangle)
\end{split}
\end{equation*}
for all vector fields $X$. Using \eqref{vff} leads to $\z=\frac{m}{k} \grad(p)$ and (iii) is proved. To compute the norm of $\di \!p$ we use the  
identity 
\begin{equation*}\label{norm}
p=\vert \tau \vert^2=\vert \z \lrcorner \,\, \bdel \tau \vert^2=\tfrac{k}{2}\vert \z \vert^2,
\end{equation*}
which is granted by the definition of $\z$ and \eqref{vff}.\\
(v) since $\tau \in \Lambda^{0,m-1}M$ we have $K \lrcorner \, \tau=iJK \lrcorner \, \tau$. At the same time $JK$ is proportional to $\zeta$ thus 
$K \lrcorner \, \tau=0$ due to $\tau=\zeta \lrcorner \, \bdel \tau$. Taking into account the vanishing of $\del \tau$ from (i) as well as (iv) leads to  
$ \li_{K}\! \tau =K \lrcorner \, \bdel \tau=\tfrac{ik}{m}\tau $.
\end{proof}
From parts (iii) and (iv) in Proposition \ref{Ein-1} it is easy to derive, for future use, that 
\begin{equation}\label{hess}
\mathrm{Hess}(p)=\tfrac{k}{m^2}g 
\end{equation}
as well as 
\begin{equation} \label{lap-e}
\frac{1}{2m}p\Delta p+\vert \di\!p\vert^2=\frac{k_1}{m}p.
\end{equation}
\begin{rema} \label{ein-tauf}
The distribution $\V:=\ker(\tau)$ is spanned by the vector fields $K$ and $JK$. Moreover $\V$ is obviously totally geodesic and the leaves
are flat 2-dimensional manifolds.
\end{rema}
Before proving the main result of this section we record the following simple observation.
\begin{lema}\label{critical}
Let $(M,g, J)$ and $(\varphi, \tau)$ be as above and assume $k>0$. Then $\tau$ has no zeroes and $p$ no critical points.
\end{lema}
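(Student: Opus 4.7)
The plan is first to reduce the two assertions to a single one by recording their equivalence, and then to rule out the common vanishing locus. By Proposition~\ref{Ein-1}(iv), $|\di p|^2=\tfrac{2k}{m^2}p$ on $M$; together with $k>0$ and $p=|\tau|^2\ge 0$, this yields immediately the coincidence
\[
\{\tau=0\}\;=\;\{p=0\}\;=\;\{\di p=0\},
\]
so ``$\tau$ has no zeroes'' is the same statement as ``$p$ has no critical points'', and it suffices to show that this common set is empty.

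I would then suppose for contradiction that $p(x_0)=0$ at some $x_0\in M$. Since $p\ge 0$, the point $x_0$ is a global minimum of $p$, and by~\eqref{hess} the Hessian $\mathrm{Hess}(p)=\tfrac{k}{m^2}g$ is positive definite, making $x_0$ an isolated nondegenerate minimum. Integrating the Hessian identity along any geodesic $\gamma$ with $\gamma(0)=x_0$ yields the exact formula $p(\gamma(s))=\tfrac{k}{2m^2}s^2$. In parallel, by Proposition~\ref{Ein-1}(iii), the holomorphic Killing field $K=-J\grad p$ has an isolated zero at $x_0$ whose linearisation $\nabla K|_{x_0}=-\tfrac{k}{m^2}J|_{x_0}$ is a nonzero multiple of $J$, so the local flow of $K$ is a genuine rotation near $x_0$ with period $\tfrac{2\pi m^2}{k}$.

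The contradiction should emerge by combining this picture with Proposition~\ref{Ein-1}(v), which gives $\li_K\tau=ik_1\tau$ with $k_1=k/m>0$, and with the representation $\tau=\tfrac{m}{k}JK\lrcorner\bdel\tau$ from Proposition~\ref{Ein-1}(iv), in which $\bdel\tau$ is parallel and nowhere vanishing of constant norm $\sqrt{k}$. The special form equation~\eqref{special-i} also forces $\nabla\varphi|_{x_0}=0$, since each term on its right hand side involves $\tau(x_0)=0$, and differentiating once more yields a specific second-order ``initial datum'' at $x_0$ determined entirely by $\bdel\tau|_{x_0}$. I expect the main technical obstacle to lie in extracting a genuine obstruction from these a priori merely compatible pieces: the injectivity of $v\mapsto v_{0,1}\lrcorner\bdel\tau$ combined with the periodicity of the $K$-flow are the tools most likely to close the argument by ruling out a smooth zero of $\tau$ at the fixed point $x_0$.
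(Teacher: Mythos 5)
Your opening reduction is correct and coincides with the paper's first step: from Proposition \ref{Ein-1},(iv) and $k>0$ one gets $\{\tau=0\}=\{p=0\}=\{\di\! p=0\}$, so the two assertions of the lemma are one and the same. The genuine gap is everything after that: no contradiction is ever derived. You assemble the local data at a putative zero $x_0$ (the Hessian identity \eqref{hess}, the exact formula $p(\gamma(s))=\tfrac{k}{2m^2}s^2$ along geodesics, the linearisation $\nabla K=-\tfrac{k}{m^2}J$, the parallel form $\bdel\tau$ and the relation $\tau=\tfrac{m}{k}JK\lrcorner\,\bdel\tau$) and then state that a contradiction ``should emerge''; you even concede the pieces are ``a priori merely compatible''. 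They are in fact genuinely compatible: on flat $\bb{C}^m$ with $\Psi$ the standard $(0,m)$-volume form and $V$ the position vector field, the form $\tau=V\lrcorner\,\Psi$ is a co-closed Hermitian Killing form with $\bdel\tau=m\Psi$ parallel, $k=m^2>0$, and $\tau(0)=0$; the injectivity of $v\mapsto v_{0,1}\lrcorner\,\bdel\tau$ and the $2\pi m^2/k$-periodic rotation generated by $K$ are both present there and coexist with a smooth zero of $\tau$ at the fixed point. So the missing step is not a technicality to be filled in along the route you indicate: no argument using only the jet of the data at $x_0$ can produce the desired obstruction, and the route is blocked.

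The paper closes the argument by a global, dynamical mechanism rather than an infinitesimal one: critical points of $p$ are exactly the zeros of the Killing field $K$, and since $\nabla K=-\tfrac{k}{m^2}J$ is invertible they are isolated; then along an integral curve $\gamma$ of $\grad p$ the function $f=p\circ\gamma$ obeys the linear ODE $\dot f=\vert\di\! p\vert^2=\tfrac{2k}{m^2}f$, so $f(0)=0$ forces $f\equiv 0$ and the whole trajectory consists of critical points, contradicting isolatedness. You should note, however, that your own computation $p(\gamma(s))=\tfrac{k}{2m^2}s^2$ shows that an isolated nondegenerate zero of $p$ is locally perfectly consistent with \eqref{hess} (and is realised in the $\bb{C}^m$ model above, which the paper's Remark \ref{E-rmk2} identifies as the complete case). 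The crux of the paper's argument is therefore the assertion that the gradient trajectory through a critical point carries more than the single point $x_0$, and this step deserves scrutiny rather than being taken on faith; it is exactly the point at which your attempt, honestly, stalls.
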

\begin{proof}
By Proposition  \ref{Ein-1},(iv) the zeroes of $\tau$ are precisely the critical points of $p$. The latter set coincides with the zero set of the 
Killing vector field $K$; since $\ker(\nabla K)=0$ it follows that $p$ has only isolated critical points.
Assume that $x\in M$ is a critical point of $p$ and let $\gamma$ be the  integral curve of
$\grad(p)$ through the point $x=\gamma(0)$. Then the function $f(t) := p(\gamma(t))$ satisfies the differential equation 
$
\dot f(t)  = \frac{2k}{m^2}f(t)
$
with $f(0)=0$ which forces $f$ to be identically zero. Thus all points on $\gamma$ are critical points of $p$, which is  a contradiction since the
critical points of $p$ are isolated.
\end{proof}

To state our classification result we recall the following.
\begin{defi} \label{def-cone}
Let $(M,g,J)$ be K\"ahler. A cone structure for $g$ is a vector field $V_M$ such that $\nabla V_M=1_{TM}$. 
\end{defi}
A local model for this type of geometry is provided by the metric cone $M=P \times \mathbb{R}_+$ with cone metric $ g= r^2 g_P+ (\di\!r)^2$, where $(P,g_P)$ is a Sasaki  manifold. In this case the cone vector field is $V_M= r \frac{\partial}{\partial r}$. The metric cone is Einstein \cite{Baer} iff it is Ricci flat and $g_P$ is an Einstein metric. More adapted to K\"ahler geometry is thinking of $P$ as a local $\bb{R}$-bundle with K\"ahler base $(N,g_N,J_N)$. Then $(M,g,J)$ is oftenly referred to as the conification of the latter \cite{MaRo}. See also section \ref{ex} for putting this into the perspective Sasaki-Einsteinof the Calabi construction.  
\begin{teo} \label{E-thm}
Let $(M^{2m}, g, J), m \geq 3$ be a K\"ahler-Einstein manifold. There exists a solution $(\varphi,\tau)$ to \eqref{special-i} with 
$\bdel \tau$  not identically zero if and only if ,the following hold
\begin{itemize}
\item[(i)] $(M,g,J)$ admits a cone structure $V_M$ and a normalised complex volume form $\Psi_M$
\item[(ii)] up to rescaling we have 
\begin{equation} \label{soln1}
\varphi=\frac{1}{4} V_M^{1,0} \wedge (V_M \lrcorner \, \Psi_M)+\varphi_0, \ \tau=V_M \lrcorner \, \Psi_M
\end{equation}
where $\nabla \varphi_0=0$.
\end{itemize}
\end{teo}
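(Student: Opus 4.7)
The plan is to use Proposition~\ref{Ein-1} to read off the geometric structure, and then to verify the formula for $\varphi$ by a direct calculation.

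For the forward implication, assume $(\varphi,\tau)$ solves \eqref{special-i} on the K\"ahler-Einstein manifold $M$ with $\bdel\tau$ not identically zero. Proposition~\ref{Ein-1} supplies all the ingredients: $g$ is Ricci-flat, $\del\tau=0$, $\bdel\tau$ is parallel of constant squared norm $k>0$, and the real vector field $\zeta:=\tfrac{m}{k}\grad p$ satisfies $\nabla\zeta=\tfrac{1}{m}1_{TM}$ together with $\tau=\zeta\lrcorner\bdel\tau$. Setting $V_M:=m\zeta$ yields a cone structure $\nabla V_M=1_{TM}$, while a suitably rescaled multiple of $\bdel\tau$ serves as a parallel normalised complex volume form $\Psi_M$; tautologically $\tau=V_M\lrcorner\Psi_M$. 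This establishes (i) and the formula for $\tau$ in (ii).

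For the formula for $\varphi$ in (ii), observe that once $\tau$ is fixed the right-hand side of \eqref{special-i} is determined, so any two solutions differ by a parallel $(1,m-1)_0$-form; it thus suffices to exhibit one particular solution. A Leibniz-rule computation on $\varphi_1:=\tfrac{1}{4}V_M^{1,0}\wedge(V_M\lrcorner\Psi_M)$, using $\nabla V_M=1_{TM}$, $\nabla\Psi_M=0$, and the resulting $\nabla_X\tau=X\lrcorner\Psi_M$, reduces the verification of \eqref{special-i} for $\varphi_1$ to a pointwise K\"ahler-algebraic identity relating the three forms $V_M^{1,0}\wedge(X\lrcorner\Psi_M)$, $X^{1,0}\wedge(V_M\lrcorner\Psi_M)$, and $\omega\wedge(X\lrcorner V_M\lrcorner\Psi_M)$. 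Primitivity of $\varphi_1$ follows from the K\"ahler identity $[L^\star_\omega,V_M^{1,0}\wedge\,\cdot\,]=iV_M^{0,1}\lrcorner$ combined with $V_M\lrcorner\tau=V_M\lrcorner V_M\lrcorner\Psi_M=0$; passing to the difference, $\varphi_0:=\varphi-\varphi_1$ is parallel and primitive, as claimed.

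For the converse, the existence of the parallel complex volume form $\Psi_M$ forces $\mathrm{hol}(g)\subseteq SU(m)$, hence Ricci-flatness, so $g$ is automatically K\"ahler-Einstein. Setting $\tau:=V_M\lrcorner\Psi_M$, one computes directly $\nabla_X\tau=X\lrcorner\Psi_M$ and, by tracing, $\bdel\tau=m\Psi_M$ which is nowhere vanishing; in particular $\tau\in\HK^{0,m-1}(M,g)$ with $\bdel\tau$ not identically zero. The same pointwise algebraic identity then confirms that $\varphi:=\tfrac{1}{4}V_M^{1,0}\wedge\tau+\varphi_0$ solves \eqref{special-i} for any parallel primitive $\varphi_0$. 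The computational heart of the proof is precisely this K\"ahler-algebraic identity; it can be verified by expanding in a unitary frame adapted to the splitting $T^{1,0}M=\langle V_M^{1,0}\rangle\oplus(V_M^{1,0})^\perp$, in which all three forms involved become explicit and matching them pins down the coefficient $\tfrac14$.
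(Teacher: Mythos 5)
Your treatment of item (i) and of $\tau$ follows the paper: everything is read off from Proposition~\ref{Ein-1} ($g$ Ricci flat, $\del\tau=0$, $\bdel\tau$ parallel with $k=\vert\bdel\tau\vert^2>0$, $\nabla\zeta=\tfrac1m 1_{TM}$, $\tau=\zeta\lrcorner\bdel\tau$), and your observation that once $\tau$ is fixed any two solutions of \eqref{special-i} differ by a parallel form is correct and cleanly reduces (ii) to exhibiting one particular solution. Where you genuinely diverge from the paper is in how that particular solution is produced. The paper avoids any pointwise frame computation: since $K_M=-JV_M$ is a holomorphic Killing field, $\li_{K_M}$ preserves \eqref{special-i}; combining $\li_{K_M}\tau=im\tau$ (Proposition~\ref{Ein-1},(v)) with $\li_{K_M}\varphi=\nabla_{K_M}\varphi-[\nabla K_M,\varphi]$, $\nabla K_M=-J$ and $K_M\lrcorner\tau=0$ gives that $K_M^{1,0}\wedge\tau-2i\varphi$ is parallel, hence $\varphi=-\tfrac{i}{2}K_M^{1,0}\wedge\tau+\varphi_0=\tfrac12 V_M^{1,0}\wedge\tau+\varphi_0$ directly. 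Your route buys nothing that this symmetry argument does not, and it defers the entire content to a ``pointwise K\"ahler-algebraic identity'' that you assert but never verify; that identity is precisely the nontrivial step, so as written there is a gap.

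Moreover, the asserted outcome of that computation is wrong: carrying it out (e.g.\ on flat $\bb{C}^m$ with $V_M$ the Euler field, $\Psi_M=d\ol{z}_1\wedge\cdots\wedge d\ol{z}_m$, testing \eqref{special-i} against $X=\partial_{\ol{z}_1}$) forces the coefficient $c$ in $\varphi=c\,V_M^{1,0}\wedge(V_M\lrcorner\Psi_M)+\varphi_0$ to equal $\tfrac12$, not $\tfrac14$ --- in agreement with the paper's own derivation $\varphi=-\tfrac{i}{2}K_M^{1,0}\wedge\tau+\varphi_0$ after normalising $\tau=V_M\lrcorner\Psi_M$. (The $\tfrac14$ in the statement of Theorem~\ref{E-thm} appears to be a misprint; the relative constant between $\varphi$ and $V_M^{1,0}\wedge\tau$ is invariant under the simultaneous rescaling of the pair, so it cannot be absorbed into ``up to rescaling''.) So your claim that the frame computation ``pins down the coefficient $\tfrac14$'' would fail if you actually performed it. To repair the argument, either carry out the verification honestly (obtaining $\tfrac12$), or replace it by the paper's Lie-derivative argument, which also settles the converse direction you treat only sketchily.
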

\begin{proof}
(i) the function $p$ has no critical points by Lemma \ref{critical}. Results of \cite{Tash}(see \cite{Ku}, Lemma 12 for further details) ensure that around any point of $M$ there exists an open neighborhood on which  the
metric $g$ is an explicitly given warped product metric. Since $g$  is Ricci flat it follows that the warping function has to be linear, i.e.
$(M, g)$ is locally a metric cone. Moreover, since the manifold is Ricci flat and K\"ahler it has to be a cone over a Sasaki-Einstein manifold.\\
(ii) letting $V_M:=\frac{m^2}{k}JK$ and $\Psi_M:=\frac{1}{k}\bdel \tau$ yields, by Proposition \ref{Ein-1}, a cone vector field and a normalised complex volume form such that $\tau=\frac{k}{m}V_M \lrcorner \, \Psi_M$. Taking into account that $K_M=-JV_M \in \aut(M,g,J)$ we see that equation \eqref{special-i} is preserved by the Lie derivative $\li_{K_M}$, that is $(\li_{K_M} \varphi, \li_{K_M}\tau)$ is again a solution to \eqref{special-i}. Since $\li_{K_M} \tau=im \tau$ by (v) in Proposition \ref{Ein-1} it follows that $\li_{K_M} \varphi-im \varphi$ is parallel.
To compute the Lie derivative of $\varphi$ we recall the general formula 
\begin{equation} \label{lncom}
\li_{K_M}\varphi= \nabla_{K_M}\varphi - [\nabla K_M, \varphi]
\end{equation} 
where $[\cdot, \cdot]$ is the algebraic commutator introduced in
section \ref{CFK-s} which describes the standard action of skew-symmetric endomorphisms on forms.
Accordingly, taking into account Proposition~\ref{Ein-1},(iii),
the complex type of $\varphi$, as well as \eqref{special-i} with $K_M \lrcorner \, \tau = 0$ leads to
$$ 
\li_{K_M}\!\varphi-im \varphi=\nabla_{K_M} \varphi -2i\varphi=K^{1,0}_M \w \tau -2i\varphi.
$$ 
That is $\varphi=-\frac{i}{2}K^{1,0}_M \w \tau+\varphi_0$ where $\nabla \varphi_0=0$. 
\end{proof}
\begin{rema} \label{E-rmk2}
Assume $k > 0$. From results of Tashiro \cite{Tash} a complete Riemannian manifold $M$ admits a non-constant solution to \eqref{hess} if and only if it is  isometric (hence biholomorphic) to $\mathbb C^m$. Then $(\varphi, \tau)$ are described by part (ii) in Theorem \ref{E-thm} with $\Psi$ proportional to 
the standard complex volume form on $\bb{C}^m$. 
In particular the manifold $M$ cannot be compact. Alternatively this can be checked directly as follows:  first $\del \tau = 0$ implies, by using \eqref{eqn5}, that $\Delta \varphi=0$. Then integration yields $\di^{\star}\!\varphi=0$ thus
$\tau=0$ which contradicts the assumption $k>0$.
\end{rema}
There remains to treat instances when $\bdel \tau=0$. Then $\tau$ must 
be parallel by the Hermitian Killing equation. Moreover $\V$ is a parallel distribution thus we obtain a local splitting 
\begin{equation} \label{pe-split}
(M,g,J)=(\bb{C},g_{\bb{C}},J_{\bb{C}}) \times (N,g_N,J_N)
\end{equation}
where $(g_{\mathbb{C}},J_{\bb{C}})$ is the standard flat structure on $\bb{C}$ and $(N,g_N,J_N)$ is Ricci flat K\"ahler. Let $w$ be 
a complex co-ordinate on $\mathbb{C}$ with $\di\! w \in \Lambda^{0,1}\bb{C}$ and let $\omega_{\bb{C}}=\frac{1}{2i}\di\!w \wedge \di\! \overline{w}$.  We also fix a complex volume form $\Psi_N$ on $N$, normalised to 
$\vert \Psi_N \vert=1$.  
An explicit description of the(non complete) solutions is given below. As the computations are entirely similar to those above we only state the result and omit the details.
\begin{pro} \label{pro-E}
Let $(M^{2m},g,J),m\geq 3$, be K\"ahler-Einstein equipped with a pair $(\varphi, \tau)$ solving \eqref{special-i} and satisfying $\bdel \tau=0$. Assume that $\tau$ does not vanish identically. Then 
\begin{itemize}
\item[(i)] $(N,g_N,J_N)$ admits a cone structure $V_N$ and a normalised volume form $\Psi_N$
\item[(ii)] up to rescaling $\tau$ and adding parallel forms to $\varphi$ we have 
\begin{equation} \label{type2}
\varphi=i(\omega_{\bb{C}}-\omega_N) \wedge (V_N \lrcorner \, \Psi_N)+\frac{w}{2}\!\di\!\overline{w} \wedge \Psi_N, \quad \tau=\Psi_N.
\end{equation}
\end{itemize}
\end{pro}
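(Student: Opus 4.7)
Proof proposal.

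The argument mirrors that of Theorem~\ref{E-thm}, with the significant simplification that $\tau$ is now parallel. To see this, recall from Proposition~\ref{Ein-1}(i) that the Einstein assumption already forces $\partial\tau=0$ (indeed, Proposition~\ref{pro1}(ii) combined with $\rho=0$); combined with the hypothesis $\bdel\tau=0$ and the Hermitian Killing equation~\eqref{HK} for $\tau$, we obtain $\nabla\tau=0$. Hence $\tau$ is a parallel $(0,m-1)$-form.

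Parallelism of $\tau$ makes the $2$-dimensional $J$-invariant distribution $\V:=\ker\tau$ parallel, along with its orthogonal complement. The local de~Rham decomposition thus yields the splitting $(M,g,J)=(\bb{C},g_{\bb{C}},J_{\bb{C}})\times(N,g_N,J_N)$: the $2$-real-dimensional factor is flat K\"ahler (being Ricci flat), and $N$ is Ricci flat K\"ahler of complex dimension $m-1$. The parallel $(0,m-1)$-form $\tau$ is supported on the $N$-factor and coincides locally, up to rescaling, with a normalised parallel complex volume form $\Psi_N$ on $N$; rescale so that $\tau=\Psi_N$.

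To determine $\varphi$, substitute $\tau=\Psi_N$ in \eqref{special-i} and split $X=X_{\bb{C}}+X_N$ accordingly. The equation in the $T\bb{C}$-direction reads $\nabla_{X_{\bb{C}}}\varphi=X_{\bb{C}}^{1,0}\wedge\Psi_N$, which is integrated by the linear ansatz $\frac{w}{2}\di\overline{w}\wedge\Psi_N$, with $w$ playing the role of a holomorphy potential for the radial cone field on $\bb{C}$. The $TN$-directions give a top-degree special-form-type PDE on $N$ whose integrability forces the existence of a vector field $V_N$ on $N$ with $\nabla V_N=1_{TN}$, yielding the cone structure claimed in (i). With such $V_N$, the ansatz $i(\omega_{\bb{C}}-\omega_N)\wedge(V_N\lrcorner\Psi_N)$ verifies the remaining part of \eqref{special-i} through the identity $\nabla_{X_N}(V_N\lrcorner\Psi_N)=X_N\lrcorner\Psi_N$, combined with algebraic relations between $X_N^{1,0}\wedge\Psi_N$ and $\omega_N\wedge(X_{N,0,1}\lrcorner\Psi_N)$ on the top-degree component of $\Lambda^{1,m-1}N$. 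The residual ambiguity in $\varphi$ by parallel forms reflects the first-order nature of the PDE.

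The main technical point is to balance coefficients and bidegree components correctly in the $TN$-equation: one must verify that the two-term ansatz of (ii) exactly reproduces the right-hand side of \eqref{special-i}, which hinges on a dimensional identity relating $X^{1,0}\wedge\Psi_N$ to $\omega_N\wedge(X\lrcorner\Psi_N)$ at top degree on $N$, and thereby pins down the normalisation $\nabla V_N=1_{TN}$. The converse direction, that the explicit data of (i), (ii) does produce a solution, is then a direct verification along the lines of the cone calculations in Theorem~\ref{E-thm}.
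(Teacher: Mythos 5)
Your proposal is correct and follows exactly the route the paper intends: the paper itself omits this proof, noting only that $\tau$ is parallel (so $M$ splits locally as $\bb{C}\times N$ with $N$ Ricci-flat) and that the remaining computations are "entirely similar" to those of the Einstein case, which is precisely the componentwise integration of \eqref{special-i} you carry out. The one step worth writing down explicitly is that the splitting \eqref{spliLa} is itself parallel here, so the component of $\varphi$ in $(\omega_{\bb{C}}-\omega_N)\wedge\Lambda^{0,m-2}N$, read through the isomorphism $V\mapsto V\lrcorner\,\Psi_N$ of $TN$ with $\Lambda^{0,m-2}N$, turns the corresponding component of \eqref{special-i} into the cone equation $\nabla V_N=1_{TN}$, while the component in $\Lambda^{0,1}\bb{C}\wedge\Lambda^{1,m-2}_0N$ has vanishing covariant derivative and is absorbed into the parallel remainder.
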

Product metrics as in \eqref{pe-split} admit a second pair of non-trivial solutions to \eqref{special-i} given by 
\begin{equation} \label{type22}
\begin{split}
&\varphi=2iw(\omega_{\bb{C}}-\omega_N) \wedge (V_N \lrcorner \, \Psi_N)+w^2\di\!\overline{w} \wedge \Psi_N-\di\!w \wedge V_N^{1,0}\wedge 
(V_N \lrcorner \, \Psi_N)\\
&\tau=w\Psi_N-\di\!w \wedge (V_N \lrcorner \, \Psi_N).
\end{split}
\end{equation}
To check this it is enough to observe that the product $(M,g,J)$ has cone structure $V_{\bb{C}}+V_N$, where $V_{\bb{C}}$ is the cone structure of 
$(g_{\bb{C}},J_{\bb{C}})$ and normalised complex volume form 
$\di \!w \w \Psi_N$. That $(\varphi,\tau)$ given by \eqref{type22} satisfies \eqref{special-i} follows from Theorem \ref{E-thm} applied to this set of data.
\section{Classification of conformal Killing forms}
\subsection{Localisation and density} \label{loc-dens}
Pick a pair $(\varphi, \tau)$ solving \eqref{special-i}. To achieve the classification of such objects we will take advantage 
of the additional constraints this set-up imposes on the Hermitian Killing form $\tau$. We consider, over the open set 
\begin{equation*}
M_0:=\{x \in M :  \tau_x \neq 0\}\,
\end{equation*} the $2$-dimensional distribution $\V=\{U \in TM : U \lrcorner \, \tau=0\}$ together with its orthogonal complement $\H$ in $TM$, as introduced in the proof of \mbox{Theorem \ref{m-1-new}}. Both are complex, that is invariant under $J$, and it follows from
the proof of Theorem \ref{m-1-new} that the Ricci tensor of $g$ is diagonal with respect to the splitting 
\begin{equation} \label{split}
TM=\V \oplus \H \ .
\end{equation}
That is 
$$ 
\Ric_{\vert \V} \;=\; \lambda_1 1_{\V}, \quad   \Ric_{\vert \H} \;=\; \lambda_2 1_\H
$$ 
for some functions $\lambda_1$ and $\lambda_2$ on $M_0$. In particular the scalar curvature of the metric $g$ reads 
 $\frac{\s}{2}=\lambda_1+(m-1)\lambda_2$. Since $\tau$ belongs to $\Lambda^{0,m-1} \H$ we have 
$\Ric(\tau)=(m-1)\lambda_2 \tau $ hence \eqref{ldt} is now equivalent, over $M_0$, to  
\begin{equation} \label{srel-1} 
 m \, \lambda_2 \tau \;=\;  \bdel^{\star} \bdel \tau \; =\;  \Delta_{\bdel} \tau \ .
\end{equation}
Recall that $ \tau$ is co-closed, thus taking the co-differential in  \eqref{srel-1} gives 
$\grad (\lambda_2) \,\lrcorner \,\, \tau=0$, i.e.
\begin{equation} \label{dl}
\di\! \lambda_2 \in \Lambda^1 \V
\end{equation}
over $M_0$.  Now applying $\bdel$ in \eqref{srel-1}  yields 
$$
m\, \bdel(\lambda_2 \tau) \;=\; \Delta_{\bdel} \, \bdel \tau \;=\; \tfrac{\s}{2} \,  \bdel \tau \ ,
$$ 
by taking into account \eqref{hol-top} for the form $\bdel \tau \in \mathfrak{h}^{0,m}(M,g)$. Finally we find
\begin{equation} \label{param-1} 
m \, \bdel \lambda_2 \wedge \tau \;=\;  (\lambda_1 -\lambda_2) \, \bdel \tau 
\qquad \mbox{over} \quad M_0 \ .
\end{equation}
Localising further we consider the set of non-Einstein points in $M_0$
\begin{equation*}
M_1:=\{x \in M_0: (\Ric_0)_x \neq 0\}
\end{equation*}
where $\Ric_0$ denotes the tracefree part of the Ricci tensor. We assume that $M_1$ is not empty, i.e. that is $g$ is {\it{not}} an Einstein metric. From \eqref{param-1} we have 
\begin{equation} \label{sc-rles2} 
\begin{split}
& \bdel \tau= m \theta^{0,1} \w \tau
\end{split}
\end{equation}
over $M_1$, where 
\begin{equation} \label{eigf-th}
\theta= \frac{\di\! \lambda_2}{\lambda_1-\lambda_2}.
\end{equation}
Lastly, define 
$M_2:=\{x \in M_1: \theta_x \neq 0\}$.
\begin{lema} \label{densl2}
Assume that $\bdel \tau$ does 
not vanish identically in $M$. The sets $\{x \in M_1 : \lambda_2(x) \neq 0\}$ and $M_2$ are dense in $M_1$.
\end{lema}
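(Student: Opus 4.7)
The plan is to argue both density statements simultaneously by contradiction, reducing each case to the vanishing of $\bdel\tau$ on a nonempty open set, and then invoking the strong unique continuation property of holomorphic sections.

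First I would set up the key tool: by Proposition~\ref{prol-1},(i), $\bdel\tau$ lies in $\mathfrak{h}^{0,m}(M,g)$, so it has holomorphic coefficients in any local basis of $\Lambda^{0,m}M$ adapted to $\mathfrak{h}^{0,1}(M,g)$. Exactly as in the proof of Corollary~\ref{zero}, this forces $\bdel\tau$ to enjoy the strong unique continuation property on the connected manifold $M$. In particular, if $\bdel\tau$ vanishes on any nonempty open subset of $M$, then $\bdel\tau\equiv 0$ on $M$, contradicting the standing hypothesis.

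For the density of $\{x \in M_1: \lambda_2(x)\neq 0\}$ in $M_1$, I would assume, for contradiction, that there is a nonempty open $U\subseteq M_1$ with $\lambda_2\equiv 0$ on $U$; then $\bdel\lambda_2=0$ on $U$, so the identity \eqref{param-1} collapses to $\lambda_1\,\bdel\tau=0$ on $U$. But $M_1$ is by definition the locus where the Ricci tensor is non-Einstein, i.e.\ $\lambda_1\neq\lambda_2$, so on $U$ we have $\lambda_1\neq 0$. Hence $\bdel\tau=0$ on $U$, contradicting the unique continuation established above.

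For the density of $M_2$ in $M_1$, the argument is even more direct: on $M_1$ the relation \eqref{sc-rles2} reads $\bdel\tau=m\,\theta^{0,1}\wedge\tau$, so the vanishing of $\theta$ on any open $U\subseteq M_1$ immediately gives $\bdel\tau=0$ on $U$, again contradicting unique continuation. I do not anticipate a serious obstacle; the only point one needs to be careful about is ensuring that the identities \eqref{param-1} and \eqref{sc-rles2} are applied within their respective domains of validity ($M_0$ and $M_1$), which is automatic since $U\subseteq M_1\subseteq M_0$ in both cases.
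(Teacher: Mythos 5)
Your proposal is correct and follows essentially the same route as the paper: both arguments reduce the vanishing of $\lambda_2$ (via \eqref{param-1} and $\lambda_1\neq\lambda_2$ on $M_1$) or of $\theta$ (via \eqref{sc-rles2}) on an open set to the vanishing of $\bdel\tau$ there, and then invoke the strong unique continuation property of $\bdel\tau\in\mathfrak{h}^{0,m}(M,g)$ to contradict the standing hypothesis.
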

\begin{proof}
Assume that $\lambda_2$ vanishes on some open set $U_0$ in $M_1$. However $\lambda_1 $ is nowhere vanishing in $U_0$ since $g$ is not Ricci flat in $M_1$. 
From \eqref{param-1} it follows that $\bdel \tau=0$ on $U_0$ thus on $M$ by unique continuation.   
This contradicts the assumption on $\tau$ thus $U_0$ must be empty and the first claim is proved. To prove the second we record that the vanishing of $\theta$ on some open piece of $M_1$ is equivalent to the vanishing of $\bdel \tau$ over $M$. This follows from  \eqref{sc-rles2} and the unique continuation property for $\bdel \tau$.  
\end{proof}
When $M$ is compact the assumption that $\tau$ be not holomorphic is essentially redundant.
\begin{lema} \label{tau-holm}
Assume that $M$ is compact and that $\bdel \tau=0$. Then $\tau=0$ and $\nabla \varphi=0$.
\end{lema}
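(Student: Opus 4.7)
The plan is to show first that $\tau$ vanishes identically on $M$; once this is established, equation \eqref{special-i} immediately gives $\nabla\varphi = 0$ by direct substitution. The bridge between $\tau$ and $\varphi$ that drives the argument is the pair of identities $\del^{\star}\varphi = -\tfrac{m+1}{2}\tau$ from \eqref{eqn2} and $\Delta\varphi = -(m+1)\del\tau$ from \eqref{eqn5}, with compactness entering only at the final integration step.

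The crucial first step is to upgrade the hypothesis $\bdel\tau = 0$ to the stronger vanishing $\del\tau = 0$. The Hermitian Killing equation \eqref{HK} for $\tau \in \Lambda^{0,m-1}M$ reads
$$
\nabla_X\tau \;=\; X_{1,0}\lrcorner\,\del\tau \;+\; \tfrac{1}{m}X_{0,1}\lrcorner\,\bdel\tau,
$$
so $\bdel\tau = 0$ implies $\nabla^{0,1}\tau = 0$. Plugged into the Weitzenb\"ock formula \eqref{wbf}, and recalling that $-i[\rho,\cdot]$ acts as $\Ric$ on $(0,q)$-forms, this gives $\Delta_{\bdel}\tau = \Ric(\tau)$. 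Comparing with Proposition~\ref{pro1}(iv), which states $\Delta_{\bdel}\tau = \tfrac{m}{m-1}\Ric(\tau)$, forces $\Ric(\tau) = 0$ since $m\geq 3$. Substituting this and $\bdel\tau = 0$ into Proposition~\ref{prol-1}(iv) gives $\del^{\star}\del\tau = 0$, and integration by parts on the compact manifold $M$ yields $\del\tau = 0$.

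Knowing that $\del\tau = 0$, integration by parts together with \eqref{eqn2} produces
$$
0 \;=\; \int_M \langle \del\tau, \varphi\rangle \;=\; \int_M \langle \tau, \del^{\star}\varphi\rangle \;=\; -\tfrac{m+1}{2}\int_M|\tau|^2,
$$
so $\tau = 0$. Plugging $\tau = 0$ back into \eqref{special-i} yields $\nabla_X\varphi = 0$ for every $X$, which completes the plan. The main obstacle is the intermediate step $\bdel\tau = 0 \Rightarrow \del\tau = 0$: this is not a purely Hodge-theoretic statement, and in an essential way it combines the Weitzenb\"ock identity on anti-holomorphic forms with the ``Ricci-eigenform'' constraint of Proposition~\ref{pro1}(iv), which itself encodes the origin of $\tau$ as $-\tfrac{2}{m+1}\del^{\star}\varphi$ for a primitive special form $\varphi$.
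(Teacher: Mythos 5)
Your proof is correct, and its overall skeleton is the same as the paper's: first upgrade $\bdel\tau=0$ to $\del\tau=0$ using compactness, then kill $\tau$ by a second integration, then read off $\nabla\varphi=0$ from \eqref{special-i}. The difference lies in the mechanism for the intermediate step. The paper argues more economically: since $\tau$ is co-closed and $\del^{\star}\tau=0$ for type reasons, $\bdel\tau=0$ gives $\Delta_{\bdel}\tau=0$ outright, whence $\Delta_{\del}\tau=\del^{\star}\del\tau=0$ by the equality of the two Laplacians on a K\"ahler manifold, and integration gives $\del\tau=0$; it then concludes via $\Delta\varphi=-(m+1)\del\tau=0$, so that the harmonic form $\varphi$ is co-closed and $\tau=-\tfrac{2}{m+1}\del^{\star}\varphi=0$. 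You instead route through the Weitzenb\"ock formula \eqref{wbf} and the prolongation identities of Propositions \ref{pro1},(iv) and \ref{prol-1},(iv) to first extract $\Ric(\tau)=0$ (for which, incidentally, only $\tfrac{m}{m-1}\neq 1$ is needed, not $m\geq 3$), and you finish by pairing $\del\tau$ directly against $\varphi$ rather than invoking harmonicity of $\varphi$. Both are valid Hodge-theoretic arguments resting on the same underlying identities; yours uses a little more of the special-form machinery than strictly necessary, while the paper's keeps the first step at the level of pure K\"ahler Hodge theory, and the by-product $\Ric(\tau)=0$ that your route produces is not needed for the conclusion.
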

\begin{proof}
From $\bdel \tau=0$ we get $\Delta_{\bdel} \tau=0$ since $\tau$ is co-closed. Then $\Delta_{\del}\tau=0$ hence after integration $\del \tau=0$ and further $\Delta \varphi=0$ by \eqref{eqn5}. It follows that $\di^{\star} \! \varphi=0$ and finally that 
$\tau=0$.
\end{proof}
\subsection{Integrability conditions} \label{dif-str}
We assume that $\varphi$ is not parallel and that $g$ is not an Einstein metric, in other words the set $M_1$ is assumed to be non-empty.
The first objective is to determine, over the open set $M_0$, the structure of the operators $\Ric$ and $ [\rho, \cdot] $ acting on $\Lambda^{1,m-1}_0M$.
Split the K\"ahler form $\omega=\omega^{\V}+\omega^\H$ according to \eqref{split}. Purely algebraic considerations show that 
\begin{equation} \label{spliLa}
\Lambda^{1,m-1}_0M=(\omega^{\V}-\omega^{\H}) \w \Lambda^{0,m-2}\H \oplus (\Lambda^{1,0}\V \w \Lambda^{0,m-1}\H) \oplus 
(\Lambda^{0,1}\V \w \Lambda^{1,m-2}_0\H)
\end{equation}
over $M_0$.
Direct calculation taking into account only the structure of the Ricci tensor leads to the following 
\begin{lema} \label{prim}
Let $\psi=(\omega^{\V}-\omega^{\H}) \w \psi_1+\psi_2+\psi_3 \in \Lambda^{1,m-1}_0M_0$ be split according to \eqref{spliLa}. We have
\begin{itemize}
\item[(i)]  $\Ric(\psi)  = \;  \frac{\s}{2} \, \psi \;+\; (\lambda_1-\lambda_2) \, \omega \wedge \psi_1 $ 
\item[(ii)]  
$[\rho, \psi]  =i\lambda_2(m-2)(\omega^{\V}-\omega^\H) \wedge \psi_1+i \, ((m-1) \lambda_2 - \lambda_1) \, \psi_2 \; +\; i\, (\lambda_1
+ (m-3)\lambda_2)\psi_3.$
\end{itemize}
\end{lema}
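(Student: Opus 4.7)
The plan is to reduce both identities to pure algebra on the orthogonal, $J$--invariant splitting $TM_0=\V\oplus\H$, exploiting the two-eigenvalue structure $\Ric=\lambda_1\,1_{\V}+\lambda_2\,1_{\H}$. The key point is that $\rho=g(\Ric J\cdot,\cdot)=\lambda_1\omega^{\V}+\lambda_2\omega^{\H}$, so both $\Ric(\cdot)$ and $[\rho,\cdot]$ become weighted sums of frame-trace operators restricted to $\V$ and $\H$ respectively. Since the bracket $[\,\cdot\,,\psi]$ is linear in its first slot, at each point we have
\[
\Ric(\psi)=\lambda_1\Ric_{\V}(\psi)+\lambda_2\Ric_{\H}(\psi), \qquad [\rho,\psi]=\lambda_1[\omega^{\V},\psi]+\lambda_2[\omega^{\H},\psi],
\]
where $\Ric_E(\psi):=\sum_{e_i\in E}e_i\w(e_i\lrcorner\,\psi)$ for $E\in\{\V,\H\}$.

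Next I would record the two elementary pieces of bookkeeping. First, if $\psi$ factors as $\alpha\w\beta$ with $\alpha\in\Lambda^{p_\V}\V^*$ and $\beta\in\Lambda^{p_\H}\H^*$, then $e_i\lrcorner\,\beta=0$ for $e_i\in\V$ and hence $\Ric_{\V}(\alpha\w\beta)=p_\V\,\alpha\w\beta$; symmetrically for $\H$. Second, since $\omega^{\V}$ is the Kähler form of the induced $\U(\dim_{\bb{C}}\V)$--structure on $\V$ and since $e_i\lrcorner\,\omega^{\V}=0$ for $e_i\in\H$, the computation collapses to $[\omega^{\V},\alpha\w\beta]=[\omega^{\V},\alpha]\w\beta=i(q^{\V}-p^{\V})\alpha\w\beta$ by the standard identity on a Kähler vector space; analogously for $\omega^{\H}$. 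Together these give, for a form of $\V$-bidegree $(p^\V,q^\V)$ and $\H$-bidegree $(p^\H,q^\H)$,
\[
\Ric(\psi)=\bigl(\lambda_1(p^\V+q^\V)+\lambda_2(p^\H+q^\H)\bigr)\psi,\qquad [\rho,\psi]=i\lambda_1(q^\V-p^\V)\psi+i\lambda_2(q^\H-p^\H)\psi.
\]

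It then remains to apply these to the three summands of \eqref{spliLa}. The bi-degrees are: $\omega^{\V}\w\psi_1$ is $(1,1)/(0,m-2)$; $\omega^{\H}\w\psi_1$ is $(0,0)/(1,m-1)$; $\psi_2$ is $(1,0)/(0,m-1)$; $\psi_3$ is $(0,1)/(1,m-2)$. For (i) this yields Ricci eigenvalues $2\lambda_1+(m-2)\lambda_2$ on $\omega^{\V}\w\psi_1$, $m\lambda_2$ on $\omega^{\H}\w\psi_1$, and $\lambda_1+(m-1)\lambda_2=\s/2$ on $\psi_2,\psi_3$. Writing $\omega=\omega^{\V}+\omega^{\H}$ and rearranging,
\[
\Ric\bigl((\omega^{\V}-\omega^{\H})\w\psi_1\bigr)=\tfrac{\s}{2}(\omega^{\V}-\omega^{\H})\w\psi_1+(\lambda_1-\lambda_2)\,\omega\w\psi_1,
\]
which, combined with $\Ric(\psi_2+\psi_3)=\frac{\s}{2}(\psi_2+\psi_3)$, is exactly (i).

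For (ii) the same substitution yields $[\omega^{\V},\cdot]=0$ on both $\omega^{\V}\w\psi_1$ and $\omega^{\H}\w\psi_1$ (both $\V$--primitive in the $(q-p)$-sense), while $[\omega^{\H},\cdot]$ acts by $i(m-2)$ on both, producing the single term $i(m-2)\lambda_2(\omega^{\V}-\omega^{\H})\w\psi_1$. On $\psi_2$ one gets $-i\lambda_1+i(m-1)\lambda_2$, and on $\psi_3$ one gets $+i\lambda_1+i(m-3)\lambda_2$, which are precisely the coefficients in the stated formula. There is no genuine obstacle; the only care needed is bookkeeping of the $\V$-- and $\H$--bidegrees and verifying that no cross-terms appear between $\omega^{\V}\w\psi_1$ and $\omega^{\H}\w\psi_1$ in (ii), which is automatic because the coefficient on both coincides.
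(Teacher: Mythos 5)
Your proposal is correct, and it is essentially the computation the paper intends: the paper offers no written proof beyond the phrase ``direct calculation taking into account only the structure of the Ricci tensor,'' and your reduction to the $\V$/$\H$-bidegree bookkeeping via $\rho=\lambda_1\omega^{\V}+\lambda_2\omega^{\H}$ and the degree-counting identities for $\Ric(\cdot)$ and $[\omega^{E},\cdot]$ is exactly that calculation carried out. All the coefficients check out, including the rearrangement $(2\lambda_1+(m-2)\lambda_2)\omega^{\V}-m\lambda_2\,\omega^{\H}=\tfrac{\s}{2}(\omega^{\V}-\omega^{\H})+(\lambda_1-\lambda_2)\,\omega$ in part (i).
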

In the following we will indicate with $\psi_k, 1 \leq k \leq 3$ the components of a generic form $\psi \in \Lambda^{1,m-1}_0M_0$ split as in Lemma \ref{prim}. The first set of integrability conditions on pairs $(\varphi, \tau)$ solving the special conformal Killing equation is obtained below. 
\begin{pro} \label{step2}
Let the pair $(\varphi, \tau)$ solve \eqref{special-i} and assume that $\bdel \tau$ does not vanish identically on 
$M$. Then 
\begin{itemize}
\item[(i)] we have $\varphi_1=(\del \tau)_1=0$ on $M_1$
\item[(ii)] $(\frac{\s}{2}-2\lambda_1)\, \varphi_2=(\del \tau)_2$ \;and\; $(\frac{\s}{2}-2\lambda_2)\, \varphi_3=(\del \tau)_3$ \; on $M_1$
\item[(iii)] $ \Ric (\varphi) = \; \frac{\s}{2} \, \varphi$ \; on $M$.
\end{itemize}

\end{pro}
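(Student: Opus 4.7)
The plan is to exploit the curvature identity $[\rho,\varphi]=i\del\tau$ from Proposition \ref{pro1}(ii) together with the explicit block decomposition of $\Ric$ and $[\rho,\cdot]$ on $\Lambda^{1,m-1}_0M_0$ provided by Lemma \ref{prim}. Matching coefficients through the orthogonal splitting \eqref{spliLa} should yield the three identities in the statement essentially mechanically, once the vanishing of the first components is in hand.

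The key preparatory step is to establish $(\del\tau)_1=0$ on $M_1$. Applying Lemma \ref{prim}(i) to the primitive form $\psi=\del\tau$ and comparing with the identity $\Ric(\del\tau)=\tfrac{\s}{2}\del\tau$ from Proposition \ref{pro1}(iii) yields $(\lambda_1-\lambda_2)\,\omega\wedge(\del\tau)_1=0$ on $M_0$; on $M_1$ the eigenvalues $\lambda_1,\lambda_2$ are distinct, so $\omega\wedge(\del\tau)_1=0$. Writing $\omega=\omega^{\V}+\omega^{\H}$, one observes that $\omega^{\V}\wedge(\del\tau)_1$ lies in $\Lambda^{1,1}\V\wedge\Lambda^{0,m-2}\H$ while $\omega^{\H}\wedge(\del\tau)_1$ lies in $\Lambda^{1,m-1}\H$; these being independent subspaces of $\Lambda^{1,m-1}M$, both terms must vanish separately, and the first forces $(\del\tau)_1=0$ by the non-degeneracy of $\omega^\V$ on the rank-one complex distribution $\V$.

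With this in hand, the remainder of (i) together with (ii) follows by equating components in $[\rho,\varphi]=i\del\tau$ via Lemma \ref{prim}(ii). The first-summand coefficient comparison gives $(m-2)\lambda_2\varphi_1=(\del\tau)_1=0$; since $m\geq 3$ and $\lambda_2$ is non-zero on a dense subset of $M_1$ by Lemma \ref{densl2}, this forces $\varphi_1=0$ on that dense subset and, by continuity, on all of $M_1$. The other two comparisons, after using $\tfrac{\s}{2}=\lambda_1+(m-1)\lambda_2$ to rewrite the eigenvalue factors as $(m-1)\lambda_2-\lambda_1=\tfrac{\s}{2}-2\lambda_1$ and $\lambda_1+(m-3)\lambda_2=\tfrac{\s}{2}-2\lambda_2$, produce exactly the two identities of (ii).

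For (iii), applying Lemma \ref{prim}(i) to $\psi=\varphi$ gives $\Ric(\varphi)=\tfrac{\s}{2}\varphi+(\lambda_1-\lambda_2)\,\omega\wedge\varphi_1$ on $M_0$. The correction vanishes on $M_1$ because $\varphi_1=0$ by (i), and on $M_0\setminus M_1$ because there $\lambda_1=\lambda_2$. Hence $\Ric(\varphi)=\tfrac{\s}{2}\varphi$ throughout $M_0$, and since the hypothesis $\bdel\tau\not\equiv 0$ forces $\tau\not\equiv 0$ so that $M_0$ is dense in $M$ by Corollary \ref{zero}, the identity extends to all of $M$ by continuity. The only conceptually non-trivial move in the whole argument is extracting $(\del\tau)_1=0$ from the weaker scalar relation $\omega\wedge(\del\tau)_1=0$; everything else amounts to careful bookkeeping with \eqref{spliLa} plus density.
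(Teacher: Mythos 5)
Your proposal is correct and follows essentially the same route as the paper: derive $(\del\tau)_1=0$ from $\Ric(\del\tau)=\tfrac{\s}{2}\del\tau$ via Lemma \ref{prim}(i), then match components in $[\rho,\varphi]=i\del\tau$ using Lemma \ref{prim}(ii) together with the density of $\{\lambda_2\neq 0\}$ from Lemma \ref{densl2}, and finish (iii) by density of $M_0$ in $M$. The only difference is that you spell out the injectivity of $\omega\wedge\cdot$ on $(\del\tau)_1$ in more detail than the paper does, which is a harmless elaboration.
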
 
\begin{proof}
(i) and (ii) are proved at the same time. The form $\del \tau$ is primitive of type $(1,m-1)$ and belongs to $\ker(\Ric-\frac{\s}{2})$ according to  Proposition~\ref{pro1},(iii). By 
Lemma \ref{prim},(i) this entails the vanishing of $(\del \tau)_1$,  since $\lambda_1 \neq \lambda_2$ in $M_1$.
To deal with the components in $\varphi$ expand the relation $[\rho, \varphi] = i \del \tau$ from Proposition \ref{pro1},(iii) with the aid of (ii) in Lemma \ref{prim}. We get 
$$
 \lambda_2 \, \varphi_1 \; =\; 0, \qquad (\tfrac{\s}{2}-2\lambda_1)\varphi_2=(\del \tau)_2 , \qquad
 (\tfrac{\s}{2}-2\lambda_2)\varphi_3=(\del \tau)_3.
$$
This proves (ii) as well as the vanishing of $\varphi_1$ in $\{x \in M_1:\lambda_2(x)\neq 0 \}$. As the latter set is dense in $M_1$ by Lemma \ref{densl2} it follows that $\varphi_1=0$ on $M_1$.\\ 
(iii) holds in $M_1$ because $\varphi_1$ vanishes on the this set. On every open piece of $M_0$ where $g$ is Einstein the relation $\varphi \in \Ker(\Ric-\frac{\s}{2})$ is trivially satisfied. Thus (iii) holds in $M_0$ 
by a density argument. We conclude by taking into account that $M_0$ is dense in $M$.
\end{proof}
The final set of integrability conditions is derived below. 
\begin{pro} \label{c-fol} Assume that  $\bdel \tau$ does not vanish identically on 
$M$. The following hold over $M_1$
\begin{itemize}
\item[(i)] $\grad(\s)$ belongs to $ \V$
\item[(ii)] we have $\varphi_3=(\del \tau)_3=0$.
\end{itemize}
\end{pro}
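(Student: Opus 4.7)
The plan is to derive both assertions from Lemma \ref{sscal} combined with a type-decomposition argument and the density of $M_2$ in $M_1$ from Lemma \ref{densl2}. Since $\Ric(\varphi) = \frac{\s}{2}\varphi$ on all of $M$ by Proposition \ref{step2}(iii) and $\Ric(\tau) = (m-1)\lambda_2\tau$ on $M_1$, Lemma \ref{sscal} produces, after substituting $\s/2 = \lambda_1 + (m-1)\lambda_2$,
\[
J\grad(\s)\lrcorner\varphi \;=\; i\mu\,\tau, \qquad \mu \,:=\, (m-3)\lambda_1 + (m^2+1)\lambda_2.
\]
Using $\varphi_1 = 0$ on $M_1$ from Proposition \ref{step2}(i) I would write $\varphi = c\,\alpha_\V^{1,0}\w\tau + \alpha_\V^{0,1}\w\gamma$ with $c$ a smooth function, $\gamma \in \Lambda^{1,m-2}_0\H$, and $\alpha_\V^{1,0}, \alpha_\V^{0,1}$ local bases of the complex line bundles $\Lambda^{1,0}\V, \Lambda^{0,1}\V$. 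Put $X = J\grad(\s) = X^\V + X^\H$.

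Expanding $X\lrcorner\varphi$ according to the splitting \eqref{split} produces five contributions lying in mutually distinct sub-bundles of $\Lambda^{0,m-1}M \oplus \Lambda^{1,m-2}M$. Matching against $i\mu\tau \in \Lambda^{0,m-1}\H$ forces the other four pieces to vanish identically; the two crucial constraints are
\[
c\,\alpha_\V^{1,0}\w(X^\H_{0,1}\lrcorner\tau) \,=\, 0 \qquad \mbox{and} \qquad (X^\V_{0,1}\lrcorner\alpha_\V^{0,1})\,\gamma \,=\, 0.
\]
Because $\tau$ is a top anti-holomorphic form on $\H$, the first forces $X^\H = 0$ whenever $c \neq 0$, while the second forces $X^\V = 0$ whenever $\gamma \neq 0$.

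Now suppose $\gamma \neq 0$ at some point of $M_2$; by openness of $\{\gamma\ne 0\}$ this persists on an open neighbourhood $U \subseteq M_2$, and the second constraint yields $X^\V = 0$ on $U$. Consequently the $\Lambda^{0,m-1}\H$-component of $X\lrcorner\varphi$ also vanishes on $U$, forcing $\mu = 0$ there. When $m = 3$ the formula $\mu = 10\lambda_2$ reads $\lambda_2 = 0$ on $U$, directly contradicting the density of $\{\lambda_2\ne 0\}$ in $M_1$ from Lemma \ref{densl2}. When $m \geq 4$, differentiating $\mu = 0$ and combining with $d\lambda_2 \in \Lambda^1\V$ from \eqref{dl} gives $d\lambda_1 \in \Lambda^1\V$ and hence $d\s \in \Lambda^1\V$; on the other hand $X^\V = 0$ means $\grad(\s) \in \H$, so $d\s \in \Lambda^1\H$. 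Intersecting gives $d\s = 0$, which substituted back into the differentiated relations yields $(4m-2)\,d\lambda_2 = 0$, so $d\lambda_2 = 0$ on $U$, contradicting $U \subseteq M_2$.

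Therefore $\gamma = 0$ on $M_2$, that is $\varphi_3 = 0$, and by the identity $(\del\tau)_3 = (\lambda_1+(m-3)\lambda_2)\varphi_3$ from Proposition \ref{step2}(ii) also $(\del\tau)_3 = 0$ on $M_2$. Since $\bdel\tau \not\equiv 0$ forces $\varphi \not\equiv 0$ (otherwise $\tau = -\frac{2}{m+1}\del^{\star}\varphi = 0$), unique continuation for $\varphi$ ensures $c \neq 0$ on a dense subset of $M_2$; the first constraint then yields $X^\H = 0$ on that dense subset, hence on $M_2$ by continuity, proving $\grad(\s) \in \V$ there. Both conclusions propagate from $M_2$ to $M_1$ by density and continuity. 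The main obstacle is the delicate case analysis needed to rule out $\gamma \neq 0$, which hinges on transforming the algebraic relation $\mu = 0$ into a contradiction against the density of $M_2$, using crucially that $d\lambda_2$ is purely vertical while $\grad(\s)$ is horizontal in this putative situation.
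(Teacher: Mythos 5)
Your proof is correct, and it rests on the same core ingredients as the paper's: Lemma \ref{sscal} specialised via $\Ric(\tau)=(m-1)\lambda_2\tau$, the type decomposition of $J \grad (\s) \lrcorner \, \varphi$ against the splitting \eqref{spliLa}, and the density statements of Lemma \ref{densl2}. The difference is in the logical ordering. The paper proves (i) first, splitting into the cases $\varphi_2\neq 0$ (where $\xi_{\H}\lrcorner \, \varphi_2=0$ forces $\grad(\s)\in\V$) and $\varphi_2=0$ (where $c_1\s+c_2\lambda_2=0$ together with $\di\lambda_2\in\Lambda^1\V$ does the job, the case $c_1=0$ being excluded by Lemma \ref{densl2}); it then feeds (i) into the proof of (ii) to get $\grad(\s)=0$ on any open set where $\varphi_3\neq 0$, whence $\di\lambda_2=0$ and a contradiction with the density of $M_2$. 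You prove (ii) first and independently of (i): from $X^{\V}=0$ you extract $\mu=0$ directly and then run the dimension split $m=3$ versus $m\geq 4$, the latter hinging on $\di \s$ being simultaneously vertical (from differentiating $\mu=0$ against $\di\lambda_2\in\Lambda^1\V$) and horizontal (from $X^{\V}=0$), which gives $(4m-2)\di\lambda_2=0$. Both routes are valid and of comparable length; yours buys the independence of (ii) from (i) at the price of the case analysis on $m$. One small point: you do not need a unique continuation property for $\varphi$ (which the paper never establishes as such) to get $c\neq 0$ on a dense subset of $M_2$. Once $\varphi_1=\varphi_3=0$, the vanishing of $c$ on an open subset of $M_2$ gives $\varphi=0$ there, hence $\tau=-\frac{2}{m+1}\,\del^{\star}\varphi=0$ there, contradicting $M_2\subseteq M_0$ directly.
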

\begin{proof}
Since $\varphi \in \ker(\Ric-\frac{\s}{2})$ by (iv) in Proposition \ref{step2} we can apply Lemma \ref{sscal}. The constraint therein reads, after taking into account that $\Ric(\tau)=\lambda_2(m-1) \tau$ in $M_1$
\begin{equation} \label{grad-scal}
J \grad (\s) \lrcorner \,  \, \varphi=i(c_1 \s +c_2 \lambda_2) \tau
\end{equation}
where $c_1=\frac{m-3}{2}$ and $c_2=2(2m-1)$. Letting $\xi:=J\grad (\s)$ we split $\xi=\xi_{\V}+\xi_{\H}$ according to $TM=\V \oplus \H$. In order to project \eqref{grad-scal} onto the splitting \eqref{spliLa} record that 
\begin{equation*}
\begin{split}
&\xi_{\V} \lrcorner \, \varphi_2 \in \Lambda^{0,m-1}\H, \quad    \xi_{\V} \lrcorner \, \varphi_3 \in \Lambda^{1,m-2}_0\H\\
&\xi_{\H} \lrcorner \, \varphi_2 \in \Lambda^{1,0}\V \w \Lambda^{m-2}\H, \quad \xi_{\H} \lrcorner \, \varphi_3 \in \Lambda^{0,1}\V \w \Lambda^{m-2}\H
\end{split}
\end{equation*}
directly from $\varphi_2 \in \Lambda^{1,0}\V \w \Lambda^{0,m-1}\H$ and $\varphi_3 \in \Lambda^{0,1}\V \w \Lambda^{1,m-2}_0\H$. These type considerations together with having $\tau \in \Lambda^{0,m}\H$ and $\varphi_1=0$ make that \eqref{grad-scal} is equivalent to 
\begin{equation} \label{gs1}
\xi_{\V} \lrcorner \, \varphi_3=0, \quad  \xi_{\H} \lrcorner \, \varphi_3=0
\end{equation} 
coupled with 
\begin{equation} \label{gs2}
\xi_{\V} \lrcorner \, \varphi_2=i(c_1 \s +c_2 \lambda_2) \tau, \quad  \xi_{\H} \lrcorner \, \varphi_2=0.
\end{equation} 
(i) Since $\dim_{\mathbb{C}}\H=m-1$ we have $\dim_{\mathbb{C}}\Lambda^{0,m-1}\H=1$. Consequently over the open set where $\varphi_2 \neq 0$ 
the space $\{X \in \H : X \lrcorner \, \varphi_2=0 \}$ vanishes so
$\grad( \s )\in \V$ by the second equation in \eqref{gs2}. On any open subset $\mathcal{O} \subseteq M_1$ where 
$\varphi_2=0$ we must have $c_1 \s +c_2 \lambda_2=0$ by the first equation in \eqref{gs2}. But $\grad(\lambda_2)$ belongs to $\V$ by \eqref{dl} thus 
$\grad(\s) \in \V$ as well in $\mathcal{O}$, provided that $c_1 \neq 0$. If $c_1=0$ we get $\lambda_2=0$ in $\mathcal{O}$ hence the latter must be empty 
by Lemma \ref{densl2}.
We have proved, by density, that $\grad (\s)$ belongs to $ \V$ over $M^{1}$.\\
(ii) Assume that $\varphi_3 \neq 0$ at some point and therefore in some open subset $U_0 \subseteq M_1$; then $\grad( \s)=0$ in $U_0$ by the first equation in \eqref{gs1} and (i). It follows from the first equation in \eqref{gs2}
that $c_1 \s +c_2 \lambda_2=0$, in particular $\di\! \lambda_2=0$ in $U_0$ since $c_2 \neq 0$. This is a contradiction with having $M_2$ dense 
in $M_1$  (see Lemma \ref{densl2}).
Thus $\varphi_3=0$ and the vanishing of $(\del \tau)_3$ follows now from (ii) in Proposition \ref{step2}.\\
\end{proof}
\subsection{Calabi-type metrics} \label{CAL}
Given a K\"ahler manifold $(M,g,J)$ recall that a foliation 
$\mathcal{F}$ with leaf tangent distribution $\D \subseteq TM$ is called complex if $J\D=\D$. A complex foliation is called holomorphic if it is locally spanned by holomorphic vector fields. Equivalently $(\li_{V}\!J)TM \subseteq \D$ for all 
$V \in \D$. According to \cite{V}, the foliation $\mathcal{F}$ is called conformal if $\li_V\!g=\theta(V) g$ on $\D^{\perp}$ for all $V$ in $\D$, where $\theta$ is in 
$\Lambda^1\D$. It is usually referred to as the Lee form of the foliation. If moreover $\di\!\theta=0$ respectively $\theta$ is exact then $\F$ is called homothetic respectively globally homothetic. Following \cite{NaOr} we present below an easy exterior differential criterion to show that a given foliation by complex curves is conformal.
\begin{pro} \label{crit}Let $(M^{2m},g,J)$ be K\"ahler equipped with a holomorphic foliation $\F$. Let $\V$ be the   leaf tangent distribution, $\H:=\V^{\perp}$ and split the 
K\"ahler form $\omega=\omega^{\V}+\omega^{\H}$ according to $TM=\V \oplus \H$. The following statements are equivalent
\begin{itemize}
\item[(i)] $\F$ is conformal with Lee form $\theta$
\item[(ii)] we have $\di\!\omega^{\V}=-\theta \wedge \omega^{\H}$ for some $\theta \in \Lambda^1\V$.
\end{itemize}
\end{pro}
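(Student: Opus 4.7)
The plan is to translate both conditions into statements about the Lie derivative $\li_V\omega^{\H}$ for $V \in \V$, and then use holomorphicity of $\F$ to bridge between $\omega^{\H}$ and $g$ on $\H \times \H$. Observe first that, because $J$ preserves both $\V$ and $\H = \V^{\perp}$, the K\"ahler form $\omega$ has no mixed $\V^{*} \wedge \H^{*}$ component, so $\omega = \omega^{\V} + \omega^{\H}$ actually lives in $\Lambda^{2}\V^{*} \oplus \Lambda^{2}\H^{*}$.

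First I would check that $\di\omega^{\V}$ has only a $\Lambda^{1}\V^{*} \wedge \Lambda^{2}\H^{*}$ component. The $\Lambda^{3}\V^{*}$ part vanishes because $\V$ has real rank two; the $\Lambda^{3}\H^{*}$ part vanishes by Cartan's formula since every term involves evaluating $\omega^{\V}$ with an $\H$-entry; and the $\Lambda^{2}\V^{*} \wedge \Lambda^{1}\H^{*}$ part vanishes by combining the integrability $[V_1,V_2] \in \V$ with $\di\omega = 0$. Next, for $V \in \V$ and $X, Y \in \H$, I would derive the key identity
\begin{equation*}
\di\omega^{\V}(V, X, Y) \;=\; -(\li_{V}\omega^{\H})(X, Y)
\end{equation*}
by Cartan-expanding $\di\omega^{\V}(V,X,Y) = -\di\omega^{\H}(V,X,Y)$ and collecting terms. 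The holomorphicity hypothesis $(\li_{V}J)TM \subseteq \V$ for $V \in \V$ then gives $J\!\left([V,X]^{\H}\right) = [V, JX]^{\H}$ for $X \in \H$, from which
\begin{equation*}
(\li_{V}\omega^{\H})(X, Y) \;=\; (\li_{V}g)(JX, Y) \qquad (X, Y \in \H)
\end{equation*}
follows by direct substitution in the Lie-derivative formula together with orthogonality of $\V$ and $\H$.

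With these two identities in hand the equivalence is immediate. For any $\theta \in \Lambda^{1}\V$ one has $(\theta \wedge \omega^{\H})(V, X, Y) = \theta(V)\,\omega^{\H}(X,Y)$ whenever $V \in \V$ and $X, Y \in \H$, since $\theta$ kills $\H$. Combining this with the two steps above, the equation $\di\omega^{\V} = -\theta \wedge \omega^{\H}$ is equivalent to $(\li_{V}g)(JX, Y) = \theta(V)\,g(JX, Y)$ for all $V \in \V$ and $X, Y \in \H$; because $J$ is a bijection of $\H$, this is in turn equivalent to $(\li_{V}g)|_{\H \times \H} = \theta(V)\,g|_{\H \times \H}$, which is precisely conformality of $\F$ with Lee form $\theta$. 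I expect the main obstacle to be simply bookkeeping of sign conventions in Cartan's formula and of the various projections onto $\V$ and $\H$; once these are pinned down, the remaining work is routine.
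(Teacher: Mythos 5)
Your argument is correct, and it is the natural one: the paper itself gives no proof of Proposition \ref{crit}, deferring to the reference [NaOr], and your Cartan-formula computation of the components of $\di\omega^{\V}$ combined with the identity $(\li_V\omega^{\H})(X,Y)=(\li_Vg)(JX,Y)$ (which is exactly where holomorphicity of $\F$ enters) is precisely the ``easy exterior differential criterion'' the paper alludes to. The only point worth flagging is that your appeal to $\V$ having real rank two to kill the $\Lambda^3\V^{*}$ component is an implicit hypothesis taken from the surrounding context (foliations by complex curves); in fact that component vanishes for a leaf distribution of any rank by the same argument you use for the $\Lambda^2\V^{*}\wedge\Lambda^1\H^{*}$ part, namely $\di\omega=0$ together with integrability of $\V$, so nothing is lost.
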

As a direct consequence we have 
\begin{coro} \label{conf-ho}
Let $(M^{2m},g,J)$ be K\"ahler equipped with a holomorphic and conformal foliation $\F$ of complex codimension $\geq 2$. Then $\F$ is homothetic.
\end{coro}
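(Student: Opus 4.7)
My plan is to differentiate the identity $d\omega^{\V}=-\theta\wedge\omega^{\H}$ from Proposition~\ref{crit} and analyse the resulting constraint on $d\theta$ component by component. Since the K\"ahler form $\omega=\omega^{\V}+\omega^{\H}$ is closed one first gets $d\omega^{\H}=-d\omega^{\V}=\theta\wedge\omega^{\H}$; applying $d$ to both sides of $d\omega^{\V}=-\theta\wedge\omega^{\H}$ and using $d^2=0$ then yields $d\theta\wedge\omega^{\H}=0$, because the correction term $\theta\wedge d\omega^{\H}=\theta\wedge\theta\wedge\omega^{\H}$ vanishes identically.

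Next I will decompose $d\theta=(d\theta)_{\V\V}+(d\theta)_{\V\H}+(d\theta)_{\H\H}$ according to the splitting $\Lambda^2 T^{*}M=\Lambda^2 \V^{*}\oplus(\V^{*}\wedge\H^{*})\oplus\Lambda^2 \H^{*}$ and examine each piece of the identity $d\theta\wedge\omega^{\H}=0$. Since $\omega^{\H}$ is nowhere vanishing and $\H$ has complex rank $k=\dim_{\mathbb{C}}\H\geq 2$, wedging with $\omega^{\H}$ is injective on $\Lambda^2 \V^{*}$ (trivially) and on $\V^{*}\wedge\H^{*}$ (by the Hard Lefschetz injectivity $\Lambda^1 \H^{*}\to \Lambda^3 \H^{*}$, which is where the codimension hypothesis $k\geq 2$ is used). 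These two injectivities force $(d\theta)_{\V\V}=0$ and $(d\theta)_{\V\H}=0$.

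The only piece that escapes this Lefschetz argument is $(d\theta)_{\H\H}$, since $L_{\omega^{\H}}\colon\Lambda^2 \H^{*}\to\Lambda^4 \H^{*}$ has a large kernel of primitive $2$-forms. I will control it by returning to the defining relation $d\omega^{\V}=-\theta\wedge\omega^{\H}$ and evaluating on a mixed triple $(V,H_1,H_2)$ with $V\in\V$ and $H_1,H_2\in\H$. Because $\omega^{\V}$ vanishes as soon as one argument lies in $\H$, Cartan's formula collapses the left-hand side to $-g(J[H_1,H_2]_{\V},V)$, while the right-hand side equals $-\theta(V)\,\omega^{\H}(H_1,H_2)$. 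Writing $\xi\in\V$ for the $g$-dual of $\theta_{|\V}$, this rearranges to the clean pointwise identity $[H_1,H_2]_{\V}=-\omega^{\H}(H_1,H_2)\,J\xi$. Since $\theta_{|\H}=0$, it follows that $d\theta(H_1,H_2)=-\theta([H_1,H_2]_{\V})=\omega^{\H}(H_1,H_2)\,g(\xi,J\xi)$, which vanishes by skew-symmetry of $J$ with respect to $g$. Combined with the previous step, $d\theta=0$, i.e.\ $\F$ is homothetic. The main conceptual obstacle is precisely this last step: one has to realise that Lefschetz does \emph{not} close the argument on the $\H\H$-block and that an extra geometric input, extracted from the Lie-bracket structure of $\H$ via the conformality identity itself, is needed.
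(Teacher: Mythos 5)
Your proof is correct, and it follows the route the paper intends: the paper offers no argument of its own beyond calling the corollary a ``direct consequence'' of Proposition~\ref{crit} and citing \cite{NaOr}, and your computation is exactly what fills that gap. You correctly identify the one genuinely nontrivial point, namely that $\di\theta\wedge\omega^{\H}=0$ kills the $\Lambda^2\V^*$ and $\V^*\wedge\H^*$ blocks by Lefschetz injectivity (using $\dim_{\bb{C}}\H\geq 2$) but leaves the primitive part of $(\di\theta)_{\H\H}$ untouched when the codimension is exactly $2$; your bracket computation $[H_1,H_2]_{\V}=-\omega^{\H}(H_1,H_2)\,J\xi$, which gives $\di\theta(H_1,H_2)=\omega^{\H}(H_1,H_2)\,g(\xi,J\xi)=0$, closes this cleanly and is the standard O'Neill-tensor identity one finds in \cite{NaOr}.
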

See \cite{NaOr} for more details.
The full local description of K\"ahler metrics that admit homothetic foliations by complex curves has been obtained in \cite{Chiossi-Nagy}. Of relevance here is the following particular instance of that setup.
\begin{defi} \label{C-T}
Let $(M^{2m},g,J),m \geq 2$, be K\"ahler. It is called of Calabi type provided it admits a foliation $\mathcal{F}$ by complex curves which is totally geodesic and homothetic. 
\end{defi}
By \cite{Chiossi-Nagy} Calabi-type metrics correspond locally to K\"ahler metrics 
obtained by the Calabi Ansatz \cite{C, Hw-S}, on total spaces of holomorphic line bundles. 
\subsection{The canonical foliation} \label{sec-fol}
Below we show by using the integrability conditions $(\del \tau)_1=(\del \tau)_3=0$ from the previous section that the conformal Killing equation \eqref{special-i} forces the K\"ahler structure $(g,J)$ to be of Calabi type over $M_1$. First direct consequences include an explicit expression for the form $\varphi$. At the same time, based on Proposition \ref{c-fol}  we describe a geometric way to fully reduce the integration of equation \eqref{special-i}  on $(\varphi,\tau)$ to solving a first order ODE. 
\begin{pro} \label{fol-int-new}Assume that $(\varphi,\tau)$ solves \eqref{special-i} and that $\bdel \tau$ does not vanish identically in $M$. The K\"ahler manifold $(M_1,g,J)$ is of Calabi type with respect to 
$\V=\ker (\tau)$.
The Lee form $\theta$ of the corresponding foliation is given by \eqref{eigf-th}.

\end{pro}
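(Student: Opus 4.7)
The plan is to combine the vanishings $(\del\tau)_1=0=(\del\tau)_3$ on $M_1$, given by Propositions~\ref{step2} and~\ref{c-fol}, with the Hermitian Killing equation~\eqref{HK} and the formula~\eqref{sc-rles2}, in order to verify the Calabi-type structure of $(g,J)$ along $\V=\ker(\tau)$.

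First, I derive a Pfaffian-type structural equation for $\tau$. The vanishings $(\del\tau)_1=0=(\del\tau)_3$ force $\del\tau$ into the middle summand $\Lambda^{1,0}\V \w \Lambda^{0,m-1}\H$ of the decomposition~\eqref{spliLa}. Since $\Lambda^{0,m-1}\H$ is a complex line bundle generated over $M_0$ by $\tau$, we can write $\del\tau=\alpha \w \tau$ for a unique $\alpha \in \Lambda^{1,0}\V$. Together with~\eqref{sc-rles2} this yields the structural equation
\begin{equation*}
d\tau \;=\; (\alpha + m\,\theta^{0,1}) \w \tau \ .
\end{equation*}

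Second, I prove total geodesicity of $\V$. Substituting the structural equation into~\eqref{HK} and using $V_{1,0}\lrcorner\,\tau=0$ by type and $V_{0,1}\lrcorner\,\tau=0$ for $V\in\V=\ker\tau$, I get
\begin{equation*}
\nabla_V \tau \;=\; V_{1,0}\lrcorner\,\del\tau + \tfrac{1}{m}V_{0,1}\lrcorner\,\bdel\tau \;=\; \bigl(\alpha(V_{1,0}) + \theta^{0,1}(V_{0,1})\bigr)\,\tau \ .
\end{equation*}
Differentiating $V'\lrcorner\,\tau=0$ along $V\in\V$ for any $V'\in\V$ then gives $(\nabla_V V')\lrcorner\,\tau = -V'\lrcorner\,\nabla_V\tau = 0$, whence $\nabla_V V'\in\V$. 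Total geodesicity follows, and since $\V$ is $J$-invariant and of real rank two, integrability is automatic by symmetry of $[V,V']$, so $\V$ is a foliation by complex curves.

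Third, I identify the Lee form. The key observation is that because $\H$ has complex rank $m-1$, the pairing $\tau\w\bar\tau$ is a nonzero multiple of $|\tau|^2\,(\omega^\H)^{m-1}$. Applying the exterior derivative to this identity and substituting $d\tau=(\alpha+m\theta^{0,1})\w\tau$ on the left-hand side, I compare the two expressions. The real combination $\alpha+\bar\alpha\in\Lambda^1\V$ is absorbed by the $\V$-component of $d\log|\tau|^2$, while the $m\theta$-contribution survives. A routine manipulation, using $d\omega=0$ and the splitting $\omega=\omega^\V+\omega^\H$, then yields
\begin{equation*}
d\omega^\V \;=\; -\,\theta \w \omega^\H
\end{equation*}
with $\theta$ given exactly by~\eqref{eigf-th}. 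By Proposition~\ref{crit} this is equivalent to $\V$ being conformal with Lee form $\theta$. Finally, since the complex codimension of $\V$ is $m-1\geq 2$, Corollary~\ref{conf-ho} upgrades conformality to homotheticity; combined with total geodesicity from Step~2, this establishes the Calabi-type structure on $(M_1,g,J)$ with Lee form as claimed.

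The hardest part is Step~3: extracting the correct Lee form from the structural equation, rather than some rescaled version. The coefficient $m$ in $\bdel\tau=m\,\theta^{0,1}\w\tau$ has to match up exactly with the power $(m-1)$ appearing in $(\omega^\H)^{m-1}$, modulo sign and type considerations, so careful bookkeeping is required. An alternative, possibly cleaner, route is to derive the explicit formula $\nabla_H\tau=-\theta^{0,1}\w(H_{0,1}\lrcorner\,\tau)$ for $H\in\H$ directly from~\eqref{HK} and read off the conformal scaling of $\omega^\H$ under $\V$-transport from there.
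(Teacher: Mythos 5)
Your Steps 1 and 2 are sound and essentially coincide with the paper's argument: the vanishings $(\del\tau)_1=(\del\tau)_3=0$ force $\del\tau\in\Lambda^{1,0}\V\w\Lambda^{0,m-1}\H$, hence $\del\tau=\alpha\w\tau$ with $\alpha\in\Lambda^{1,0}\V$, and total geodesicity follows by differentiating $V'\lrcorner\,\tau=0$ along $\V$. One omission already appears here: both Proposition \ref{crit} and Corollary \ref{conf-ho}, which you invoke later, require the foliation to be \emph{holomorphic}, and you never verify this. It does follow from what you have set up (for $V\in\V$ one has $V_{0,1}\lrcorner\,\del\tau=0$ because $\alpha$ is of type $(1,0)$ and $V\lrcorner\,\tau=0$, and the identity $(\li_V\!J)U\lrcorner\,\tau=2iV_{0,1}\lrcorner\,\nabla^{1,0}_U\tau$ then gives $(\li_V\!J)TM\subseteq\ker\tau=\V$), but the step must be made before the cited results apply.

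The genuine gap is in Step 3. Differentiating $\tau\w\bar\tau=c\,|\tau|^2(\omega^{\H})^{m-1}$ produces a single identity between $(2m-1)$-forms, and this cannot determine the $3$-form $d\omega^{\H}$. Decompose $d\omega^{\H}=A+B+C$ with $A\in\Lambda^2\V\w\Lambda^1\H$, $B\in\Lambda^1\V\w\Lambda^2\H$, $C\in\Lambda^3\H$: wedging with $(\omega^{\H})^{m-2}$ annihilates $C$ (it lands in $\Lambda^{2m-1}\H=0$) and retains only the Lefschetz trace of $B$, since $L_{\omega^{\H}}^{m-2}:\Lambda^2\H\to\Lambda^{2m-2}\H$ maps onto a one-dimensional space. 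So your identity is \emph{consistent with} $d\omega^{\V}=-\theta\w\omega^{\H}$ but does not imply it; a primitive part of $B$ or a nonzero $C$ would go undetected. The paper closes this by differentiating a second closed $2$-form, the Ricci form $\rho=\lambda_1\omega^{\V}+\lambda_2\omega^{\H}$: from $d\rho=d\omega=0$ one gets $(\lambda_1-\lambda_2)\,d\omega^{\V}+d\lambda_1\w\omega^{\V}+d\lambda_2\w\omega^{\H}=0$, and since $d\lambda_1,d\lambda_2\in\Lambda^1\V$ (by \eqref{dl} and Proposition \ref{c-fol},(i)) while $\V$ has real rank two, the terms $d\lambda_k\w\omega^{\V}$ vanish, yielding the full identity $d\omega^{\V}=-\theta\w\omega^{\H}$ with $\theta$ exactly as in \eqref{eigf-th}. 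Your parenthetical ``alternative route'' would in fact work: from $\nabla_X\tau=-\theta^{0,1}\w(X_{0,1}\lrcorner\,\tau)$ for $X\in\H$ and $0=\nabla_X(V\lrcorner\,\tau)$ one reads off $((\nabla_XV)_{\H})_{0,1}=\theta^{0,1}(V)X_{0,1}$ and hence $\li_V\!g=\theta(V)g$ on $\H$ directly; but as written you did not carry it out, and the route you did take does not reach the conclusion.
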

\begin{proof} 
A short computation taking into account the vanishing of $\tau$ on $\V$ leads to the identity $(\li_V\!J)U \lrcorner \, \tau=2iV_{01} \lrcorner \, \nabla^{10}_U\tau$ whenever $V \in \V$ and $U \in TM$. But $\nabla^{10}\tau=\del \tau$ by the Hermitian-Killing equation and $\del \tau \in \Lambda^{1,0}\V \w \Lambda^{0,m-1}\H$ thus $V_{01} \lrcorner \, \del \tau=0$. We have showed that $\V$ is holomorphic. To show that $\V$ is totally geodesic take into account that $\tau$ vanishes on $\V$ together with the Hermitian-Killing equation on $\tau$ to arrive at 
$\nabla_VW \lrcorner \, \tau=-W_{01} \lrcorner \, (V_{10} \lrcorner \, \del \tau+V_{01} \lrcorner \, \bdel \tau)$, for all $V,W \in \V$. The claim follows from 
having $\dim_{\mathbb{C}}\V=1$ and $(\del \tau)_1=0$. \\
Because $\rho=\lambda_1 \omega^{\V}+\lambda_2 \omega^{\H}$ and $\omega=\omega^{\V}+\omega^{\H}$ are closed differentiation yields 
\begin{equation*} 
(\lambda_1-\lambda_2) \di\! \omega^{\V}+\di\! \lambda_1 \wedge \omega^{\V}+\di\! \lambda_2 \wedge \omega^{\H}=0.
\end{equation*}
But $\di\! \lambda_k \w \omega^{\V}=0, k=1,2$ by combining \eqref{dl} and (i) in Proposition \ref{c-fol}. In particular 
$\di\! \omega^{\V}=-\theta \w \omega^{\H}$
thus, since $\theta \in \Lambda^1\V$, the foliation induced by $\V$ is conformal with Lee form $\theta$ by Proposition \ref{crit}. That $\di\! \theta=0$ follows by Corollary \ref{conf-ho} whilst taking into account that $\dim_{\mathbb{C}}\H=m-1\geq 2$.
\end{proof}
The full local description of K\"ahler metrics admitting a conformal Killing form as in \eqref{special-i} is now at hand. We will use the specific geometric structure structure of the Calabi type  manifold $(M_1,g,J)$ as follows. It is a crucial observation that metrics of Calabi type admit a canonically defined symmetry, constructed as follows.
Let the vector field $\z$ be determined from $\theta=g(\z,\cdot)$. Around each point $x$ in $M_2$ consider some open set such that $\theta=\di\! f$ and denote $z=e^f$. By \cite{Chiossi-Nagy} we know that 
$$K=-zJ\z$$ is a holomorphic Killing vector field with moment map $z>0$, since $K \lrcorner \, \ \omega=\di \! z$. Again by \cite{Chiossi-Nagy} we know that 
$\di\! g(K,K) \w \di\! z=0$, that is $K$ is rigid in the terminology of \cite{ACG}. Hence, by localising further if necessary 
one can find an open connected subset $\mathcal{O}$ in $M_2$ around $x$, where $g(K,K)=\mathbb{X}(z)$ for some smooth map $\bb{X}:(0,\infty) \to (0, \infty)$; this is sometimes refered to as the momentum profile of $K$. Since $K$ is a holomorphic Killing vector field we have the general formula $-\di\! K^{\flat}=(L_{JK}g)(J \cdot, \cdot)$ where the metric dual $K^{\flat}=g(K, \cdot)$. By taking into account that  $\V$ is totally geodesic and conformal with Lee form $\theta=\di \ln z$  this leads to 
\begin{equation} \label{der-K}
-\di\! K^{\flat}=\bb{X}^{\prime}(z)\omega^{\V}+z^{-1}\bb{X}(z)\omega^{\H}
\end{equation}
over $\mathcal{O}$. 
In what follows statements will be said to hold {\it{locally}} in $M_2$, provided  this happens on any connected open subset $\mathcal{O}$ of $M_2$ as introduced before. This type of further localisation is needed because we will work around points in $M_2$ where both
$K$ and the momentum profile $\bb{X}$ are well defined. 

Returning to the description of solutions of \eqref{special-i} we make the following technical
\begin{lema} \label{Liet} Assume that $\li_K\!\tau=i\chi \tau$ in $M_2$, for some real valued function $\chi:M_2 \to \mathbb{R}$ with $\di\! \chi \in \Lambda^1\V$. Then $\di\!\chi=0$.
\end{lema}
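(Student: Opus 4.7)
The plan is to apply $\li_K$ to the integrability formula $\bdel\tau = m\,\theta^{0,1}\w\tau = m\,\bdel\!\ln z\,\w\tau$ from \eqref{sc-rles2}, exploiting two facts: $K$ is holomorphic Killing (so $\li_K$ commutes with $\bdel$) and $K$ is Hamiltonian with moment map $z$ (so $K(z)=\omega(K,K)=0$, hence $\li_K(\bdel\!\ln z)=\bdel(K(\ln z))=0$). From these, on the one hand
\[
\li_K(\bdel\tau)=m\,\bdel\!\ln z \w \li_K\tau = m\,\bdel\!\ln z\w i\chi\tau = i\chi\,\bdel\tau,
\]
while on the other hand
\[
\li_K(\bdel\tau)=\bdel(\li_K\tau)=\bdel(i\chi\tau)=i\,\bdel\chi\w\tau+i\chi\,\bdel\tau.
\]
Comparison gives the key identity $\bdel\chi\w\tau=0$.

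The rest is a type-decomposition argument. Split $\bdel\chi=\alpha_{\V}+\alpha_{\H}$ according to $\Lambda^{0,1}M=\Lambda^{0,1}\V\oplus\Lambda^{0,1}\H$. Because $\dim_{\bb{C}}\H=m-1$, we have $\Lambda^{0,m}\H=0$, so $\alpha_{\H}\w\tau=0$ automatically. The condition $\bdel\chi\w\tau=0$ therefore reduces to $\alpha_{\V}\w\tau=0$ inside the one-dimensional space $\Lambda^{0,1}\V\w\Lambda^{0,m-1}\H$, forcing $\alpha_{\V}=0$. Thus $\bdel\chi\in\Lambda^{0,1}\H$.

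But the hypothesis $\di\chi\in\Lambda^1\V$, combined with the $J$-invariance of $\V$, forces the $(0,1)$-piece $\bdel\chi$ to lie in $\Lambda^{0,1}\V$. Intersecting, $\bdel\chi\in\Lambda^{0,1}\V\cap\Lambda^{0,1}\H=\{0\}$, so $\bdel\chi=0$. Since $\chi$ is real, $\del\chi=\overline{\bdel\chi}=0$ as well, whence $\di\chi=0$. The only mildly delicate point is verifying that $\li_K$ commutes cleanly with $\bdel$ and annihilates $\bdel\!\ln z$, but these both rest on the standard identities for $K$ available from the Calabi-type framework of Section \ref{CAL}.
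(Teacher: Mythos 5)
Your proposal is correct and follows essentially the same route as the paper: apply $\li_K$ to $\bdel\tau=m\,\theta^{0,1}\w\tau$, use that $K$ is real holomorphic (so $\li_K$ commutes with $\bdel$ and preserves types) and that $\li_K\theta=0$ (from $K(z)=\omega(K,K)=0$) to extract $\bdel\chi\w\tau=0$, and then conclude from the type decomposition together with the hypothesis $\di\chi\in\Lambda^1\V$ and the non-vanishing of $\tau$ on $M_2$. The only cosmetic difference is the order in which you invoke the wedge identity and the hypothesis in the final step; the paper uses the hypothesis first to place $\bdel\chi$ in $\Lambda^{0,1}\V$ and then kills it by wedging with $\tau$, which is logically equivalent to your version.
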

\begin{proof}
Apply the Lie derivative $\li_K$ in $\bdel \tau=m \theta^{01} \w \tau$.
Taking into account that $K$ is holomorphic and $L_K\theta=0$ yields $\bdel(\li_K\!\tau)=m \theta^{01} \w \li_K\! \tau$. Expansion using that $\li_K\!\tau=i\chi \tau$ 
and again the expression for $\bdel \tau$ leads to $\bdel \chi \w \tau=0$.  Because $\bdel \chi \in \Lambda^{0,1}\V$ and $\tau$ does not vanish 
in $M_2$ we get $\bdel \chi=0$. Since $\chi$ is real valued the claim follows.
\end{proof}
Locally the Calabi type manifold $M_1$ can be thought of as the total space of a conformal and holomorphic submersion $\pi:(M_1,g,J) \to (N^{2(m-1)},g_N,J_N)$, where $(N,g_N,J_N)$ is a K\"ahler manifold. This submersion has dilation factor $\sqrt{z}$, for $z\pi^{\star}g_N=g_{\vert \H}$ and totally geodesic fibres. 
\begin{pro}\label{loc-descr}\ Assume that $(\varphi,\tau)$ is a solution to \eqref{special-i} such that $\bdel \tau$ does not vanish identically in $M$. The following holds locally in $M_2$
\begin{itemize}
\item[(i)] we have $\varphi=\frac{z}{\bb{X}(z)} \del z \w \tau$
\item[(ii)] $\!\li_K\tau=ik\tau$ for some real constant $k$
\item[(iii)] $\bb{X}(z) =z(C_1z^{m}+\frac{2k}{m})$ for some $C_1 \in \mathbb{R}^{\times}$
\item[(iv)] the (local) base metric $g_N$ is Einstein with $\s^N=2(m-1)k$ 
\item[(v)] the eigenfunctions of the Ricci tensor are 
$$ \lambda_1=-m^2C_1z^{m-1}, \ \lambda_2=-mC_1z^{m-1} \ 
$$
in particular $\s=-2m(2m-1)C_1z^{m-1}$.
\end{itemize}
\end{pro}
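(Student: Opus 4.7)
The approach is to combine the Calabi-type normal form from Proposition \ref{fol-int-new} with the component vanishings $\varphi_1 = \varphi_3 = 0 = (\del\tau)_1 = (\del\tau)_3$ from Propositions \ref{step2} and \ref{c-fol} to pin everything down to the rank-one subbundle $\Lambda^{1,0}\V\w\Lambda^{0,m-1}\H$. Since $\di z = K\lrcorner\omega$ with $K\in\V$, the $(1,0)$-form $\del z$ locally generates $\Lambda^{1,0}\V$, while $\tau$ generates $\Lambda^{0,m-1}\H$. I will therefore write $\varphi = f\,\del z\w\tau$ for a locally-defined complex function $f$, and determine $f$ together with the momentum profile $\bb{X}$ by feeding this ansatz into \eqref{special-i}.

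For (i), I would test \eqref{special-i} with $X = JK$: the right-hand side reduces to $(JK)^{1,0}\w\tau$ since $JK\in\V$ kills $\tau$ by contraction, and the left-hand side is computed by combining \eqref{der-K} for $\nabla_{JK}(\del z)$ with the Hermitian-Killing equation and $\bdel\tau = m\theta^{0,1}\w\tau$ for $\nabla_{JK}\tau$. Matching coefficients on the one-dimensional fibre of $\Lambda^{1,0}\V\w\Lambda^{0,m-1}\H$ and using $JK(z) = \bb{X}(z)$ yields $f = z/\bb{X}(z)$. For (ii), since $K$ is a holomorphic Killing vector field preserving $z$ (hence $\V$, $\H$ and $\theta$), $\li_K\tau$ remains in $\Lambda^{0,m-1}\H$, so $\li_K\tau = h\,\tau$ for some complex function $h$. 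Applying $\li_K$ to $\bdel\tau = m\theta^{0,1}\w\tau$ and using $\li_K\theta = 0$ gives $\bdel h\w\tau = 0$, hence $\bdel h\in\Lambda^{0,1}\V$. A direct computation of $\li_K\tau$ via \eqref{lncom}, using $\nabla K$ from \eqref{der-K} and the Hermitian-Killing equation for $\tau$, shows that $h$ is purely imaginary; Lemma \ref{Liet} then forces $h = ik$ with $k\in\bb{R}$ a constant.

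For (iii), I would apply $\li_{JK}$ to the identity $\varphi = (z/\bb{X}(z))\,\del z\w\tau$ from (i) and compare with the independently computable Lie derivative of both sides of \eqref{special-i} (using that $JK$ is real holomorphic with $JK(z) = \bb{X}(z)$, and that $\li_{JK}\tau$ is obtained from (ii) together with the Hermitian-Killing equation). After expanding and using $\li_{JK}\del z = \bb{X}'(z)\del z$, the compatibility condition collapses to a first-order linear ODE for $\bb{X}(z)$ of the shape $z\bb{X}'(z) - (m+1)\bb{X}(z) + 2kz = 0$, whose general solution is $\bb{X}(z) = z(C_1 z^m + 2k/m)$; the constant $C_1$ must be non-zero since $M_1$ is non-empty. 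Parts (iv) and (v) then follow: the Ricci eigenvalues of any Calabi-type K\"ahler metric are explicit functions of $z$, $\bb{X}(z)$, $\bb{X}'(z)$ and the scalar curvature $s^N$ of the local base (standard Calabi-Ansatz formulas, see e.g.\ \cite{ACG,Hw-S}); substituting $\bb{X}(z) = z(C_1 z^m + 2k/m)$ and enforcing consistency with \eqref{param-1} forces $s^N = 2(m-1)k$ (so the base is Einstein) and yields the displayed formulas $\lambda_1 = -m^2 C_1 z^{m-1}$, $\lambda_2 = -m C_1 z^{m-1}$. The main obstacle I anticipate is the bookkeeping for (iii): several nontrivial cancellations are needed to distil the various differential and curvature identities into the clean ODE above, and the vertical/horizontal decomposition must be carefully tracked throughout.
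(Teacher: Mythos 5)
Your overall strategy --- reducing to the ansatz $\varphi=f\,\del z\w\tau$ on the rank-one bundle $\Lambda^{1,0}\V\w\Lambda^{0,m-1}\H$ and feeding it into \eqref{special-i} --- is the paper's, and your parts (ii), (iv), (v) match its argument. But there is a genuine gap in (i). Evaluating \eqref{special-i} at $X=JK$ does \emph{not} determine $f$ algebraically: since $\V$ is totally geodesic the left-hand side contains the term $(JK)(f)\,\del z\w\tau$, and $\nabla_{JK}\tau$ contains, besides the $\bdel\tau$-contribution you invoke, the term $(JK)_{1,0}\lrcorner\,\del\tau=(\tfrac{\s}{2}-2\lambda_1)\,c(JK)\,\tau$ coming from the Hermitian Killing equation together with the parametrisation $\del\tau=(\tfrac{\s}{2}-2\lambda_1)\,c\w\tau$. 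The vertical component of \eqref{special-i} is therefore a first-order (Riccati-type) differential relation along the fibres involving $f$, its $JK$-derivative, $\bb{X}$, $\bb{X}'$ and the unknown function $\tfrac{\s}{2}-2\lambda_1$ --- it is precisely the relation the paper converts, \emph{after} $f$ is known, into the ODE $z\bb{X}'=(m+1)\bb{X}-2kz$ of part (iii). By using the $JK$-direction both to ``read off'' $f$ in (i) and again (via $\li_{JK}$) to derive the ODE in (iii), you extract two independent conclusions from a single scalar equation; the scheme is under-determined and circular. (A secondary issue: $JK=\grad z$ is holomorphic but not Killing, so $\li_{JK}$ does not manifestly preserve \eqref{special-i}; the paper avoids this by simply writing \eqref{special-i} in the direction $V\in\V$.)

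The missing input is the \emph{horizontal} component of \eqref{special-i}. For $X\in\H$ one has $X^{1,0}\w\tau=-i\,\omega^{\H}\w(X\lrcorner\,\tau)$, and projecting the equation onto $\Lambda^{1,0}\V\w\Lambda^{0,1}\V\w\Lambda^{0,m-2}\H$ the only surviving terms are $c\w\nabla_X\tau=-c\w\theta^{0,1}\w(X\lrcorner\,\tau)$ (from \eqref{hk-e2}) and $\tfrac{i}{2}\,\omega^{\V}\w(X\lrcorner\,\tau)$; this yields the purely algebraic identity $-c\w\theta^{0,1}=\tfrac{i}{2}\,\omega^{\V}$, whence $c=\vert\theta\vert^{-2}\theta^{1,0}=\tfrac{z}{\bb{X}}\del z$, using $\theta=\di\!\ln z$ and $\vert\di\!z\vert^2=\bb{X}$. (Note that even your own truncated vertical computation would return $f(z\bb{X}'+\bb{X})=2z$ rather than $f=z/\bb{X}$, a sign that the vertical equation is the wrong place to look.) Once (i) is obtained this way, the vertical direction is free to be spent on (iii), and the rest of your outline goes through as in the paper.
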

\begin{proof}
Because $\varphi_1=\varphi_3=0$ respectively $(\del \tau)_1=(\del \tau)_3=0$ the forms $\varphi$ respectively $\del \tau$ belong to $\Lambda^{1,0}\V \w \Lambda^{0,m-1}\H$. The space $\Lambda^{0,m-1}\H$ is spanned by $\tau$ over $M_1$ since $\tau$ does not vanish in the latter set.
Thus, by also taking into account part (ii) in Proposition \ref{step2}
we can parametrise 
\begin{equation} \label{par-pair}
\varphi=c \w \tau, \quad \del \tau=(\frac{\s}{2}-2\lambda_1)c \w \tau
\end{equation}
for some $c \in \Lambda^{1,0}\V$. The parametrisation of $\del \tau$ above together with \eqref{sc-rles2} makes that the Hermitian Killing equation \eqref{HK} on $\tau$ in direction of $\V$ respectively $\H$ reads
\begin{equation} \label{hk-e0}
\nabla_V \tau=((\frac{\s}{2}-2\lambda_1)c+\theta^{0,1})(V) \, \tau 
\end{equation}
for all $V$ in $\V$, respectively 
\begin{equation} \label{hk-e2}
\nabla_X \tau=-\theta^{0,1} \w X \lrcorner \, \,  \tau 
\end{equation}
for all $X$ in $\H$, since $\theta \in \Lambda^1\V$ by \eqref{dl}. During the rest of the proof we will work on some connected open subset $\mathcal{O}$ of $M_2$ where both $K$ and $\bb{X}$ are well defined.\\
(i) we examine the various components of the conformal Killing equation \eqref{special-i} w.r.t. the parametrisation of $(\varphi,\tau)$ in \eqref{par-pair}. For convenience recall that 
$$ \nabla_U \varphi=U^{1,0} \w \tau+\frac{i}{2} \omega \wedge (U \lrcorner \, \tau)
$$
for all $U$ in $TM$. With $X \in \H$ this implies
\begin{equation} \label{phi1}
\begin{split}
\nabla_Xc \w \tau+c \w \nabla_X \tau\; =&\;\, X^{1,0} \w \tau+\frac{i}{2} \omega^{\H} \w (X \lrcorner \, \tau)+\frac{i}{2} \omega^{\V} \w (X \lrcorner \, \tau) \\
\;=&\; \,\frac{1}{2}X^{1,0} \w \tau+\frac{i}{2} \omega^{\V} \w (X \lrcorner \, \tau).
\end{split}
\end{equation}
We have used that $X^{1,0} \w \tau=-i \omega^{\H} \w (X \lrcorner \, \tau)$, a consequence, essentially, of having $\H$ of dimension 
$2(m-1)$. Projecting \eqref{phi1} onto $\Lambda^2\V \w \Lambda^{0,m-2}\H$ whilst taking \eqref{hk-e2} into account yields $-c \w \theta^{0,1}=\frac{i}{2} \omega^{\V}$ hence 
\begin{equation} \label{T1}
c=\vert \theta \vert^{-2} \theta^{1,0}=\frac{z}{\bb{X}}\del z
\end{equation} 
and the claim is proved. The remainder of \eqref{phi1} is equivalent with $\nabla_Xc=c(\z)X^{1,0}$. A short computation based on \eqref{T1} and \eqref{der-K} shows this is an identity.\\
(ii) from \eqref{sc-rles2} and \eqref{eigf-th} we get $ \di\! \tau=((\frac{\s}{2}-2\lambda_1)c+m\theta^{01}) \w \tau.$
Taking into account that $\theta(K)=0$ as well as \eqref{T1} it follows that 
$\li_K\!\tau=i \chi \tau$ where the real valued function $2\chi=\frac{m\bb{X}}{z}-z(\frac{\s}{2}-2\lambda_1)$. Due to (i) and (iii) in Proposition \ref{c-fol} this satisfies $\di\!\chi \in \Lambda^1\V$ thus Lemma \ref{Liet} ensures that $\chi=k$ on $\mathcal{O}$ for some $k \in \mathbb{R}$ and the claim follows. Moreover from the explicit expression for the function $\chi$ above  
\begin{equation} \label{s-pp}
z(\frac{\s}{2}-2\lambda_1)=\frac{m\bb{X}}{z}-2k.
\end{equation}
 \\
(iii) for $V \in \V$ the conformal Killing equation reads
$ \nabla_Vc \w \tau+c \w \nabla_V \tau=V^{1,0} \w \tau.$
Since $\V$ is totally geodesic $ \nabla_Vc$ belongs to $\Lambda^{1,0}\V$. Using \eqref{hk-e0} leads to 
\begin{equation} \label{e1}
\nabla_Vc+((\frac{\s}{2}-2\lambda_1)c+\theta^{01})(V)c=V^{10}.
\end{equation} 
By \eqref{T1} we know 
that $c=-\frac{iz}{\bb{X}(z)}g(K_{01}, \cdot)$; since $\nabla_VK_{01}=
\frac{i\bb{X}^{\prime}(z)}{2}V_{01}$ by \eqref{der-K}, we obtain  $\nabla_Vc=\frac{z\bb{X}^{\prime}(z)}{2\bb{X}}g(V_{01}, \cdot)-i(\frac{z}{\bb{X}})^{\prime}
\di\! z(V) g(K_{01}, \cdot).$
Plugging these together with $\theta^{0,1}=iz^{-1}K^{0,1}$ and \eqref{s-pp} in \eqref{e1} we see it is equivalent to the first order ODE
$\bb{X}^{\prime}=(m+1)\frac{\bb{X}}{z}-2k$. The claim follows by integration of the latter.\\
(iv) and (v) follow by using succesively (iii), \eqref{s-pp}, $\rho=\lambda_1\omega^{\V}+\lambda_2\omega^{\H}$ and the general formula for the Ricci form of a Calabi-type metric \cite{Tonn}. 
In our conventions it reads
\begin{equation*}
\rho=\pi^{\star}\rho^N+\Lambda_1\omega^{\V}+\Lambda_2\omega^{\H} 
\end{equation*}
where 
$\Lambda_1 = - \frac12(\bb{X}^{\prime \prime}(z) + \frac{m-1}{z^2}(\bb{X}^{\prime}(z)z - \bb{X}(z)))$
and
$\Lambda_2 = -\frac{1}{2z}(\bb{X}^{\prime}(z) + (m-1)\frac{\bb{X}(z)}{z}).$\\
\end{proof}
In other words in equation \eqref{special-i} the form $\varphi$ is entirely determined by $\tau$ and the moment map $z$ as well as the specific properties of the Calabi-type structure, i.e. Einstein base and explicit form of the momentum profile $\bb{X}$. Note that all the constraints involving $\theta$, including \eqref{eigf-th} are now solved.
Theorem \ref{main1} is thus proved up to determining the structure of $\tau$ w.r.t. this set of data. This 
is done in section \ref{lifts}.  Specifically we describe Hermitian Killing forms on Calabi type manifolds. As a consequence of 
Proposition \ref{ex-cal}, (ii), we obtain the local form of $\tau$ as given in Theorem \ref{main1}.

\subsection{Global classification} \label{g-clas}
Proposition \ref{loc-descr} leads naturally to the identification of the global invariants relevant to our set-up. The unique continuation property for Hermitian-Killing 
forms can be used to fix the constants of integration over the manifold. We continue working on a K\"ahler manifold $(M^{2m},g,J), m \geq 3$, equipped 
with a pair $(\varphi, \tau)$ satisfying equation \eqref{special-i}. Keeping the notation from the previous section we first prove that 
\begin{lema} \label{lengths}
We have locally in $M_2$ 
$$\vert \tau \vert^2 = C_2\bb{X}, \ \quad  \vert \varphi \vert^2=\frac{C_2}{2}z^2$$
where the constant $C_2$ is positive. 
\end{lema}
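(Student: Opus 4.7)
My plan is to first establish that $p:=\vert \tau \vert^2$ depends only on the moment map $z$, then derive an ODE that determines it up to a multiplicative constant, and finally read off $\vert \varphi \vert^2$ from the formula $\varphi=\tfrac{z}{\bb{X}(z)}\del z \w \tau$ of Proposition~\ref{loc-descr}(i).

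To show $p$ is a function of $z$, I would use the two forms of the Hermitian-Killing equation derived in the proof of Proposition~\ref{loc-descr}. For $X\in \H$, equation \eqref{hk-e2} expresses $\nabla_X\tau = -\theta^{0,1}\w X\lrcorner\tau$, which lies in $\Lambda^{0,1}\V\w\Lambda^{0,m-2}\H$ and is therefore orthogonal to $\tau\in\Lambda^{0,m-1}\H$; hence $X(p)=0$, so $\grad p\in\V$. Since $\li_K\tau=ik\tau$ with $k\in\bb{R}$ by Proposition~\ref{loc-descr}(ii), the function $p=\vert\tau\vert^2$ is $K$-invariant, i.e.\ $K(p)=0$. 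Both $dp$ and $dz$ annihilate $\H$ and vanish on $K$, and $dz\neq 0$ on $M_2$; thus $dp$ is pointwise proportional to $dz$, which gives $p=F(z)$ locally.

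To pin down $F$, I would compute $JK(p)$ in two ways. On one hand $JK(z)=dz(JK)=\omega(K,JK)=g(K,K)=\bb{X}$, so $JK(p)=F'(z)\bb{X}$. On the other hand, applying \eqref{hk-e0} with $V=JK$ yields
\[
JK(p) = 2\,\Re\!\Big[\big((\tfrac{\s}{2}-2\lambda_1)c+\theta^{0,1}\big)(JK)\Big]\,p\,.
\]
A direct type computation gives $\del z(JK)=\tfrac{1}{2}\bb{X}$ and $\theta^{0,1}(JK)=\tfrac{\bdel z}{z}(JK)=\tfrac{\bb{X}}{2z}$, and by \eqref{T1} we have $c(JK)=\tfrac{z}{\bb{X}}\cdot\tfrac{\bb{X}}{2}=\tfrac{z}{2}$. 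Substituting this together with \eqref{s-pp} in the form $\tfrac{\s}{2}-2\lambda_1=\tfrac{m\bb{X}}{z^{2}}-\tfrac{2k}{z}$ leads to $JK(p)=\big(\tfrac{(m+1)\bb{X}}{z}-2k\big)p$. Combining the two expressions gives the separable ODE
\[
\frac{F'(z)}{F(z)}\;=\;\frac{m+1}{z}-\frac{2k}{\bb{X}(z)}\;=\;\frac{\bb{X}'(z)}{\bb{X}(z)},
\]
where the last equality is exactly the ODE that defined $\bb{X}$ in the proof of Proposition~\ref{loc-descr}(iii). Integration yields $F=C_2\bb{X}$ with $C_2\in\bb{R}$, and $C_2>0$ because $p>0$ on $M_2$.

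For $\vert\varphi\vert^2$, since $\del z\in\Lambda^{1,0}\V$ and $\tau\in\Lambda^{0,m-1}\H$ live in orthogonal sub-bundles, the wedge satisfies $\vert\del z\w\tau\vert^{2}=\vert\del z\vert^{2}\vert\tau\vert^{2}$. Using $\grad z=JK$ gives $\vert dz\vert^{2}=\bb{X}$, hence $\vert\del z\vert^{2}=\tfrac{1}{2}\bb{X}$, and
\[
\vert\varphi\vert^{2}\;=\;\frac{z^{2}}{\bb{X}^{2}}\cdot\frac{\bb{X}}{2}\cdot C_{2}\bb{X}\;=\;\frac{C_{2}}{2}\,z^{2}.
\]
The only nontrivial step is the first: reducing $p$ to a function of $z$. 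After that, the ODE bookkeeping and the final wedge-length computation are essentially forced. The main risk lies in sign/convention slips in evaluating $\del z(JK)$ and $\theta^{0,1}(JK)$, which is why I would verify the final ODE matches the one satisfied by $\bb{X}$ itself as the sanity check.
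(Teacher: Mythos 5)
Your proposal is correct and follows essentially the same route as the paper: the paper takes the scalar product with $\tau$ in \eqref{hk-e0} to get $\di\ln\vert\tau\vert^2=(\tfrac{\s}{2}-2\lambda_1)(c+\overline{c})+\theta$ and then uses \eqref{T1}, \eqref{s-pp}, $\theta=\di\ln z$ and the explicit momentum profile to conclude $\di\ln(\vert\tau\vert^2\bb{X}^{-1})=0$, which is exactly your ODE $F'/F=\bb{X}'/\bb{X}$ written as a $1$-form identity. Your evaluation along $JK$ and the final wedge-length computation for $\vert\varphi\vert^2$ agree with the paper's (unwritten) bookkeeping.
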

\begin{proof}
Taking the scalar product with $\tau$ in \eqref{hk-e0} yields $\di\! \ln \vert \tau \vert^2=(\frac{\s}{2}-2\lambda_1)(c+\overline{c})+\theta$.
By taking succesively into account \eqref{T1}, \eqref{s-pp}, that $\theta=\di\!\ln z$ together with the explicit expression for 
$\bb{X}$ in Proposition \ref{loc-descr},(iii) leads after some calculations to $\di\! \ln (\vert \tau \vert^2\bb{X}^{-1})=0$ and the claim on the norm of $\tau$ follows. The norm of $\varphi$ is computed with the aid of (i) in Proposition \ref{loc-descr}.
\end{proof}

\begin{pro} \label{glob-1}Let $(M^{2m},g,J), m \geq 3$, be a connected K\"ahler manifold equipped with a pair $(\varphi, \tau)$ satisfying \eqref{special-i}. Assume that $\bdel \tau$ is not identically zero on $M$. Then 
\begin{itemize}
\item[(i)] $p:=\vert \tau \vert^2+\frac{\s \vert \varphi \vert^2}{m(2m-1)}$ is a holomorphic Killing potential, that is 
$K_1 :=-J \grad \, p$ is an holomorphic Killing vector field
\item[(ii)] if $g$ is not Einstein there exists a constant $k_1 \geq 0$ such that $\li_{K_1}\tau=ik_1 \tau$ and $\li_{K_1}\!\varphi=ik_1 \varphi$. Moreover 
\begin{equation} \label{max}
\vert \di\! p \vert^2+\frac{1}{2m}p\Delta p \, = \, \frac{k_1}{m}p \, .
\end{equation}
\end{itemize}
\end{pro}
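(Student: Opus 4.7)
The plan is to use the explicit local formulas from Lemma~\ref{lengths} and Proposition~\ref{loc-descr} to identify $p$, on the dense open set $M_2$, with a constant multiple of the local moment map $z$ of the Calabi-type Killing field $K$, and then to extend all conclusions globally by density combined with the strong unique continuation property of Hermitian Killing forms.

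For (i) in the non-Einstein case, localise on $M_2$. By Lemma~\ref{lengths} together with Proposition~\ref{loc-descr}(iii),(v) we have
\begin{equation*}
\vert\tau\vert^2 = C_2\, \bb{X}(z), \q \vert\varphi\vert^2 = \tfrac{C_2}{2}\,z^2, \q \s = -2m(2m-1)C_1 z^{m-1}, \q \bb{X}(z) = C_1 z^{m+1}+\tfrac{2k}{m}z.
\end{equation*}
Direct substitution into the definition of $p$ shows that the terms in $z^{m+1}$ cancel, leaving $p=\tfrac{2kC_2}{m}\,z$ on $M_2$. Since $K=-J\grad z$ is holomorphic Killing, $K_1:=-J\grad p=\tfrac{2kC_2}{m}K$ is locally holomorphic Killing on $M_2$. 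Both the Killing equation and the condition $\li_{K_1}\!J=0$ are closed pointwise constraints on the globally defined vector field $K_1$, and $M_2$ is dense in $M$ in this case by Corollary~\ref{zero} and Lemma~\ref{densl2}; hence (i) follows. The Einstein case of (i) is provided directly by Proposition~\ref{Ein-1}: there $\s=0$, so $p=\vert\tau\vert^2$, and $K_1$ agrees up to scale with the Killing field constructed in Proposition~\ref{Ein-1}(iii).

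For (ii), Proposition~\ref{loc-descr}(ii) gives $\li_K\!\tau=ik\tau$. Because $K$ is holomorphic with moment map $z$ we have $\li_K z=0$ and $\li_K(\del z)=0$, so the explicit form $\varphi=\tfrac{z}{\bb{X}(z)}\del z\wedge\tau$ from Proposition~\ref{loc-descr}(i) yields $\li_K\!\varphi=ik\varphi$. Rescaling by $\tfrac{2kC_2}{m}$ produces the desired relations with $k_1:=\tfrac{2k^2C_2}{m}\geq 0$. For the PDE \eqref{max}, a short computation from \eqref{der-K} gives $\vert\di z\vert^2=\bb{X}(z)$ and $\Delta z=-\bb{X}'(z)-(m-1)z^{-1}\bb{X}(z)$; substituting the explicit polynomial $\bb{X}$, the two cubic-in-$z$ terms cancel and one is left with $\tfrac{k_1}{m}p$. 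The equation \eqref{max} then holds on $M_2$, and extends to $M$ by density and continuity.

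The main obstacle is the passage from local to global: the constants $C_1,C_2,k$ and the vector field $K$ arise in Section~\ref{sec-fol} only on open subsets of $M_2$ and depend on the choice of primitive of the Lee form $\theta$. Since $p$ is intrinsically defined, the combinations $\tfrac{2kC_2}{m}z$ and $\tfrac{2k^2C_2}{m}$ are forced to be invariant across overlapping patches. It is precisely the strong unique continuation property for $\tau$ (via Corollary~\ref{zero} and the rigidity of the Hermitian Killing ODE) that guarantees the local constants assemble into a single coherent holomorphy potential and a single constant $k_1$, so that the density argument converts the local identities into global ones.
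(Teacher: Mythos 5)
Your overall strategy is the same as the paper's: identify $p$ with $\tfrac{2kC_2}{m}z$ on $M_2$ via Lemma \ref{lengths} and Proposition \ref{loc-descr}, deduce $K_1=\tfrac{2kC_2}{m}K$ locally, and globalise using density together with the unique continuation property of Hermitian Killing forms. The local computations (the cancellation of the $z^{m+1}$-terms giving $p=\tfrac{2kC_2}{m}z$, the value $k_1=\tfrac{2k^2C_2}{m}\geq 0$, and the verification of \eqref{max} from $\vert\di\! z\vert^2=\bb{X}(z)$ and $\Delta z=2k-2m\bb{X}(z)/z$) all check out and coincide with those in the paper.

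The gap is in the globalisation. You assert that when $g$ is not Einstein, ``$M_2$ is dense in $M$ by Corollary \ref{zero} and Lemma \ref{densl2}''. Those results only give that $M_2$ is dense in $M_1$ and that $M_0$ is dense in $M$; they say nothing about $M_1$ being dense in $M_0$. The hypothesis that $g$ is not Einstein only makes $M_1$ non-empty, and nothing in your argument excludes non-empty open subsets of $M_0$ on which $g$ is Einstein; on such subsets $M_2$ is empty and your local formulas are unavailable. The paper closes this by working separately on the interior of the Einstein locus: there $\Ric=0$ by Proposition \ref{Ein-1},(i), so $p=\vert\tau\vert^2$, Proposition \ref{Ein-1},(iii) and (v) give that $K_1$ is holomorphic Killing with $\li_{K_1}\!\tau=ik_1\tau$, and \eqref{max} is precisely \eqref{lap-e}; one then concludes on $M_0$ because the union of $M_1$ with the interior of the Einstein locus is dense there. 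Your final paragraph correctly identifies unique continuation as the device that fixes a single global constant $k_1$ --- the precise form being that $(\li_{K_1}-ik_1)\tau$ is again a co-closed Hermitian Killing form once $K_1$ is known to be a global holomorphic Killing field, so its vanishing on one open set forces it to vanish on $M$ by Corollary \ref{zero}, after which $(\li_{K_1}-ik_1)\varphi$ is parallel and vanishes on an open set, hence everywhere. But that mechanism does not repair the extension of \eqref{max}, which is not a statement about a Hermitian Killing form and genuinely requires the separate verification on Einstein patches. (A trivial slip: the terms that cancel in \eqref{max} have degree $m+1$ in $z$, not three.)
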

\begin{proof}
(i) Assume that $M_1$ is not empty,  i.e. $g$ is not Einstein. Locally in $M_2$ we have that 
$p=\frac{2kC_2}{m}z$ by Lemma \ref{lengths} and the expression for the scalar curvature  from Proposition \ref{loc-descr}. Hence $K_1=-J \grad \,  p=\frac{2C_2k}{m}K$ is a holomorphic Killing vector field w.r.t. $g$, around 
any point in $M_2$. Therefore $K_1$ has the required properties over $M_1$, since $M_2$ is dense in the latter. On any open set where the metric is Einstein, that is $\Ric=0$, we have $p=\vert \tau \vert^2$. Thus $K_1$ has again the required properties by part (iii) in Proposition \ref{Ein-1}. By density it follows that $K_1$ is a holomorphic Killing vector field in $M_0$. Since this set is dense in $M$ the claim is proved.\\
(ii) Pick some open set $\mathcal{O} \subseteq M_2$ where both $K$ and $\bb{X}$ are defined. Over $\mathcal{O}$ we let $k_1=\frac{2k^2C_2}{m}$. Since $K_1$ is a global Killing vector field 
$(\li_{K_1}-ik_1) \tau$ is a co-closed Hermitian Killing form. By Proposition \ref{loc-descr}, (ii), the form  $(\li_{K_1}-ik_1) \tau=0$ vanishes over $\mathcal{O}$ hence identically in $M$, as it follows from Corollary \ref{zero}.
Applying the operator $\li_{\! K_1}-ik_1$ in equation \eqref{special-i} it follows that 
$(\li_{\! K_1}-ik_1) \varphi$ is parallel. However, from (i) and (ii) in Proposition \ref{loc-descr} together with having $K_1$ holomorphic we get that 
$(\li_{\! K_1}-ik_1)\varphi=0$ over $\mathcal{O}$.  
These facts enable us to conclude that $(\li_{K_1}-ik_1) \varphi=0$ over $M$.

To establish \eqref{max} we first work locally in $M_2$. From \eqref{der-K} we have 
$-\langle \di\! K^{\flat}, \omega\rangle=\bb{X}^{\prime}(z) \vert \omega^{\V} \vert^2+\frac{\bb{X}}{z}\vert \omega^{\H} \vert^2=
-2k+2m \frac{\bb{X}}{z}$. But $K^{\flat}=J\di\!z$ hence 
$\langle \di\!K^{\flat}, \omega\rangle=\Delta z$ and further 
$\Delta z=2k-2m \frac{\bb{X}}{z}$. Since $p=\frac{2kC_2}{m}z$ we get $\vert\! \di\! p \vert^2=
(\frac{2kC_2}{m})^2\bb{X}(z)$ and it follows, by direct verification, that  equation \eqref{max} is satisfied around each point in $M_2$, hence by density in $M_1$. 

Let us now work over some open set in $M_0$ where 
$g$ is Einstein. Since $K_1=K$ in the notation of section \ref{Einstein} and $k=\vert \bdel \tau \vert^2$ we obtain, by Proposition \ref{Ein-1},(v)  that $\li_{K_1}\!\tau=ik_1 \tau$ with $k_1=\frac{k}{m} \geq 0$. Equation \eqref{max} is precisely equation \eqref{lap-e} established in section \ref{Einstein}.
Thus \eqref{max} holds 
around points where $g$ is Einstein, hence everywhere in $M_0$, respectively $M$, by density.
\end{proof}
$\\$
{\bf{Proof of Theorem \ref{main2}}.}
Recall that $M$ is assumed  to be compact and equipped with a primitive conformal Killing form, i.e. a pair
$(\varphi, \tau)$ satisfying \eqref{special-i}. If $\bdel \tau=0$ over $M$ we have $\tau=0$ by  Lemma \ref{tau-holm}. If $g$ is an Einstein metric then 
$\tau=0$ by  Remark \ref{ein-tauf}, (ii).

Therefore, we assume that the set $M_1$ is not empty and that $\bdel \tau$ is not identically zero in $M$. Based on Proposition \ref{glob-1} we show how this leads to a contradiction by distinguishing the following cases:
\begin{itemize}
\item[i)]$K_1=0.$ Observe first that $M_1$ must be dense in $M_0$ (thus in $M$). Indeed if $g$ is Einstein on some open region in $M_0$ it must be Ricci flat on that region by Proposition \ref{Ein-1}, (i). By (iv) in the same Proposition it follows that $\tau=0$ on some open subset in $M_0$ which is a contradiction with the definition of the latter.\\
Note that, locally in $M_2$, the moment map $z$ is not constant and that $C_2 >0$. Thus if $p=\frac{kC_2}{m}z$ is constant it must vanish identically, i.e. 
$p=0$ on $M$ and $k=0$.

To obtain a contradiction with having $M_1$ non-empty we proceed as follows. Computation on connected components of $M_2$ yields 
\begin{equation*}
\begin{split}
(dJd) \vert \varphi \vert^2=&C_2(z(dJd)z+dz \w Jdz)=-C_2(z \bb{X}^{\prime} \omega^{\V}+\bb{X}\omega^{\H}+\bb{X}\omega^{\V})\\
=&- \vert \tau \vert^2((m+2)\omega^{\V}+\omega^{\H}).
\end{split}
\end{equation*}
We have succesively used the expressions for the norms of $\varphi, \tau$ in Lemma \ref{lengths}, equation \eqref{der-K} as well as 
$\bb{X}=C_1z^{m+1}$, since $k=0$.
It follows that $((dJd) \vert \varphi \vert^2)^{\w m}=(-1)^m \vert \tau \vert^{2m}  (m+2)\, \omega^m$ on $M_1$ and hence, by the density arguments above, on $M$. The desired contradiction follows 
by applying Stokes' theorem.

\medskip

\item[ii)]$K_1$ does not vanish identically. Let $x_{+}$ be a maximum point for $p$, with $m_{+}=p(x_{+})$. Then 
$\di\!p$ vanishes at $x_{+}$ hence by equation \eqref{max} we obtain $m_{+}(\Delta p)(x_{+})=2k_1 m_{+}$. 
The maximum principle implies  $(\Delta p)(x_{+})\leq 0$.
Thus, if $m_{+} \neq 0$, we get that $k_1=0$. Integrating equation \eqref{max} over $M$ shows then  $\di\!p=0$, a contradiction. 
If $m_{+}=0$ then $p\leq 0$ over $M$; since $k_1 \geq 0$ integration in \eqref{max} followed by a positivity argument leads again to the vanishing of $
K_1$, a contradiction.
\end{itemize}

\medskip

\section{Hermitian Killing forms on Calabi type manifolds} \label{ex}
\subsection{Calabi-type metrics} \label{defn-cal}
We briefly recall the construction of Calabi-type metrics on total spaces of complex bundles. Let $\bb{C}=\bb{R}^2$ be equipped with real co-ordinates 
$x,y$ and complex structure $J_{\bb{C}}$ determined from $J_{\bb{C}}\partial_x=-\partial_y$. 
We work with metrics $h$ on $\bb{C}$(or some open part of it) with respect to which $J_{\bb{C}}$ is orthogonal and the $\bb{S}^1$-action given by complex multiplication is holomorphic and Hamiltonian,thus isometric. We require that the moment map for the circle action, determined from 
$K_{\bb{C}} \lrcorner \omega_h=\di\! z_h$ satisfies $z_{h}>0$. Note that $K_{\bb{C}}=-y\partial_x+x\partial_y$.

Let $(N^{2(m-1)},g_N, J_N),m \geq 3$ be a K\"ahler manifold with K\"ahler form $\omega_N=g_N(J_N\cdot, \cdot )$. Let $L \to N$ be a Hermitian line bundle equipped with a Hermitian connection $D$ such that $R^{D}=i\omega_N \otimes 1_L$ with the curvature convention $R^{D}(X,Y)=-D^{2}_{X,Y}+D^{2}_{Y,X},\ X,Y\in TN$. 

The manifold $M=L$ has a natural circle 
action induced by complex multiplication in the fibers; we denote by $K$ its 
infinitesimal generator. Then $M$ can be recovered as the associated bundle $P\times_{\bb{S}^{1}}\bb{C}$, where $P$ is the sphere bundle of $L$ and the free circle action on $P\times \bb{C}$ is $(m,w)z=(mz, wz^{-1})$. Indicating with $\V$ the real rank two distribution 
tangent to the fibers of $L$ we obtain a direct sum splitting $TM=\V \oplus \H$ as follows. The connection in the
principal bundle 
$\bb{S}^1 \hookrightarrow P \times \bb{C} \stackrel{p}{\to} M$
has horizontal distribution $\mathcal{H}^P\oplus T\bb{C}$, where 
 $\mathcal{H}^P=\ker \Theta^{P}$ and $\Theta^{P}$ is the connection in $P$ induced by $D$.  In this picture, $\V$ respectively $\H$ are the projection of $T\bb{C}\subset T(P\times \bb{C})$ respectively $\mathcal{H}^P\subseteq T(P \times \bb{C})$ down to $TM$.

The canonical complex structure $J$ on $M$, preserving $\V$ and $\H$, is obtained as follows: on $\V$ it is induced by the canonical complex structure on $\mathbb{C}$ while on $\H$ it is the horizontal lift of the complex structure from $N$. 

A Calabi-type metric on $M$ is a circle invariant Riemannian metric $g$, K\"ahler w.r.t $J$ essentially caracterised by the following requirements. The K\"ahler form  $\omega=\omega^{\V}+\omega^{\H}$ according to $TM=\V \oplus \H$ where 
\begin{equation*}
p^{\star}\omega^{\V}=\omega_h+\Theta^P \w \di\!z_h, \ \omega^{\H}=z\omega_N
\end{equation*}
and $z=p^{\star}z_h:M \to (0,\infty)$ is a moment map for the canonical circle action, i.e. $K \lrcorner \, \omega=\di\!z$. Moreover we impose that the length function $g(K,K)=\bb{X}(z)$ for some smooth function $\bb{X}:(0,\infty) \to [0,\infty)$.
W.r.t. $\V$ the K\"ahler manifold $(M,g,J)$ is of Calabi type in the sense of Definition \ref{C-T}. Indeed it is easy to check that $\V$ is totally geodesic, holomorphic and conformal, with Lee form $\theta=\di\! \ln z$.

\subsection{Lifts} \label{lifts}
We determine, on K\"ahler manifolds $(M,g,J)$ as constructed above, the structure of the space $\HK^{0,m-1}(M,g)$. In particular, we will make explicit 
instances when $(g,J)$ admits 
co-closed Hermitian Killing forms. The main ingredient in this computation is to show how to lift, canonically, sections of $\Lambda^{p,q}(N, L)$ to differential forms on 
$M$ and how to compare the Dolbeault operators $\del: \Lambda^{p,q}(N,L) 
\to \Lambda^{p+1,q}(N,L)$ and $\del:\Lambda^{p,q}M \to \Lambda^{p+1,q}M$.

For sections $\gamma\in \Lambda^{p,q}(N, L^k), k \in \bb{Z}$ we indicate with $\tg$ the canonical lift of $\gamma$ to $\Lambda^{p+q}(P,\bb{C})$. 
\begin{lema} \label{lift-m}
We have a well defined map $\Lambda^{p,q}(N,L^k) \to \Lambda^{p,q}\H, \gamma \mapsto \hat{\gamma}$ uniquely determined from 
\begin{equation} \label{master} 
 p^{\star}\,\hg=\ol{w}^k\,\tg \ \mbox{if} \ k \geq 0 \ \mbox{respectively} \ p^{\star}\hat{\gamma}=w^{-k}\,\tg \ \mbox{if} \ k \leq 0.
\end{equation} 
\end{lema}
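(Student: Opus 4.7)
The plan is to verify that for $k \geq 0$ (resp. $k \leq 0$) the form $\overline{w}^k \tg$ (resp. $w^{-k} \tg$) on $P \times \bb{C}$ is invariant under the $\bb{S}^1$-action, annihilates the vertical direction of the fibration $p$ as well as the whole $T\bb{C}$-factor, and is of type $(p,q)$. These three properties together force the form to descend to a unique $\hg \in \Lambda^{p,q}\H$ on $M$, since $p$ is a surjective submersion and $p^{\star}$ is therefore injective on differential forms.

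The main computation is equivariance. Recall that $\gamma \in \Lambda^{p,q}(N, L^k)$ corresponds to a $\Lambda^{p,q}$-valued function on $P$ satisfying $R_z^{\star}\tg = z^{-k} \tg$ for the principal action of $z \in \bb{S}^1$, and that the product action on $P \times \bb{C}$ is $(m,w)\cdot z = (mz, wz^{-1})$. One computes $R_z^{\star} w = wz^{-1}$ and hence $R_z^{\star}\overline{w} = \overline{w}z$. For $k \geq 0$ this yields
$$R_z^{\star}(\overline{w}^k \tg) = (\overline{w}z)^k \cdot z^{-k}\tg = \overline{w}^k \tg,$$
and the analogous computation gives $R_z^{\star}(w^{-k}\tg) = w^{-k} \tg$ for $k \leq 0$. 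Observe that the opposite choice $w^k \tg$ would pull back to $w^k z^{-2k}\tg$, not invariant in general: the precise powers and the choice of $w$ versus $\overline{w}$ in \eqref{master} are forced by equivariance, which also explains the case split on the sign of $k$ (the assumption $z_h > 0$ ensures $w$ is nowhere vanishing, so $w^{-k}$ is smooth when $k \leq 0$).

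Horizontality is immediate: $\tg$ is basic for $\pi : P \to N$, so $\xi_P \lrcorner \tg = 0$ for the fundamental vector field $\xi_P$ of the $\bb{S}^1$-action on $P$, and $\tg$ trivially vanishes on $T\bb{C}$ as it is pulled back from $P$. Multiplication by the scalar $\overline{w}^k$ or $w^{-k}$ preserves both contractions, so in particular $(\xi_P - K_{\bb{C}}) \lrcorner (\overline{w}^k \tg) = 0$. Thus the form descends to a unique $\hg$ on $M$ which vanishes on $\V$; its $(p,q)$-type along $\H$ follows from $\tg$ being the lift of a $(p,q)$-form on $N$ combined with the fact that $J|_\H$ is the horizontal lift of $J_N$. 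The only delicate point is the sign-tracking in the equivariance step, where the conventions for the principal action and for the associated-bundle quotient must be matched carefully; this is exactly what dictates the asymmetric formulas in the two cases of \eqref{master}.
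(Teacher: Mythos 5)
Your argument is correct and follows essentially the same route as the paper: it uses the $(-k)$-equivariance $R^{\star}_{z}\,\tg=z^{-k}\,\tg$ together with $R^{\star}_{z}w=wz^{-1}$ to check that $\ol{w}^k\,\tg$ (resp.\ $w^{-k}\,\tg$) is invariant and horizontal for the diagonal circle action on $P\times\bb{C}$, hence descends to a unique form on $M$ vanishing on $\V$ and of type $(p,q)$ along $\H$; the paper's proof is just a terser version of this, and your explicit horizontality check $(\xi_P-K_{\bb{C}})\lrcorner(\ol{w}^k\,\tg)=0$ is a welcome piece of added detail. The only superfluous remark is the appeal to $z_h>0$ for the smoothness of $w^{-k}$ when $k\leq 0$: since $-k\geq 0$ this is a polynomial in $w$ and no non-vanishing hypothesis is needed --- the case split on the sign of $k$ exists precisely to avoid negative powers at $w=0$.
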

\begin{proof}
From the definitions the lift $\tilde{\gamma}$ of $\gamma \in \Lambda^{p,q}(N,L^k)$ to $P$ is 
$-k$-equivariant, that is $R^{\star}_{z}\,\tg=z^{-k}\,\tg$, where $(R_{z})_{z \in \bb{S}^1}$ denotes the principal circle action in $P$. It follows that the form $\ol{w}^k\,\tg \in\Lambda^{p+q}(P\times \bb{C})$ if $k\geq 0$(or $w^{-k}\,\tg \in\Lambda^{p+q}(P\times \bb{C})$ if $k\leq 0$) is circle-invariant with respect to the principal circle action on $P \times \bb{C}$; since it moreover vanishes on $T\bb{C}$, it projects onto $M$ according to \eqref{master}. By construction $\hg$ belongs to $\Lambda^{p,q}\H$. 
\end{proof}
We denote by $\di\!:\Lambda^q(N,L^k) \to \Lambda^{q+1}(N,L^k)$ the exterior derivative coupled with the connection $D$ in $L$. The orthogonal projection onto $\Lambda^{\star}\H$ of the ordinary exterior derivative, acting on $\Lambda^{\star}\H$, will be denoted by $\di_{\H}$.
\begin{pro} \label{lifty} 
The lifting map $\gamma \mapsto \widehat{\gamma}$ has the following properties 
\begin{itemize}
\item[(i)] for any $\gamma $ in $\Lambda^{p,q}(N,L^k)$ the lift $\hg$ belongs to $\Lambda^{p,q}\H$ and 
$$\li_{K}\hg=-ik\hg, \ \li_{JK}\hg=k\hg, \ \di_{\H}\!\hg=\widehat{\di\! \gamma}$$
\item[(ii)] any $\beta $ in $\Lambda^q \H$ such that $\li_K\beta=-ik\beta, \ 
\li_{JK}\beta=k\beta$ satisfies $\beta=\hg$ for some $\gamma$ in 
$\Lambda^q(N,L^k)$
\item[(iii)] we have 
\begin{equation} \label{del-lift}
\begin{split}
&\bdel \widehat{\gamma}=\widehat{\bdel \gamma}\\
&\bb{X} \del \widehat{\gamma}=2k\del z \w \widehat{\gamma}+\bb{X} \widehat{\del \gamma}
\end{split}
\end{equation}
whenever $\gamma$ is in $\Lambda^{p,q}(M,L^k)$.
\end{itemize}
\end{pro}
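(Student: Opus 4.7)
All three claims are verified by direct computation on the auxiliary space $P\times\bb{C}$, using the defining identity $p^{\star}\hg=\bar w^k\tg$ (respectively $w^{-k}\tg$). The key algebraic input is the structural identity
\[
d\tg \;=\; \widetilde{d\gamma} \;-\; ik\,\Theta^P\wedge\tg
\]
on $P$, which follows from Cartan's formula $\iota_{K_P}d\tg=\li_{K_P}\tg=-ik\,\tg$, horizontality $\iota_{K_P}\tg=0$, and $\Theta^P(K_P)=1$; here $\widetilde{d\gamma}$ denotes the lift of the covariant exterior derivative $d^{D}\gamma$. For (i), since $\tg$ is horizontal of bidegree $(p,q)$ on $P$ and vanishes on $T\bb{C}$, so does $\bar w^k\tg$, giving $\hg\in\Lambda^{p,q}\H$. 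The Lie-derivative formulas follow by lifting $K$ and $JK$ to $K_{\bb{C}}$ and $JK_{\bb{C}}$ on $P\times\bb{C}$ (valid since $p_{\star}K_{\bb{C}}=K$ and $p_{\star}JK_{\bb{C}}=JK$) and using $\li_{K_{\bb{C}}}\bar w=-i\bar w$, $\li_{JK_{\bb{C}}}\bar w=\bar w$, together with $\li_{K_{\bb{C}}}\tg=\li_{JK_{\bb{C}}}\tg=0$. For $d_{\H}\hg=\widehat{d\gamma}$, Leibniz-expand
\[
p^{\star}d\hg \;=\; k\bar w^{k-1}d\bar w\wedge\tg \;+\; \bar w^k\widetilde{d\gamma} \;-\; ik\bar w^k\,\Theta^P\wedge\tg
\]
and project onto $\mathcal{H}^P$: only $\bar w^k\widetilde{d\gamma}=p^{\star}\widehat{d\gamma}$ survives, since the first summand is valued in $T^{\star}\bb{C}$ (projecting to $\V$) and $\Theta^P$ vanishes on $\mathcal{H}^P$.

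For (ii), given $\beta\in\Lambda^q\H$ with the prescribed equivariance, set $\tg:=\bar w^{-k}p^{\star}\beta$ on the region $\bar w\neq 0$ (with the obvious modification for $k\leq 0$) and show it descends to a form on $P$. The contraction $\iota_V\tg=0$ for $V\in T\bb{C}$ is immediate from $p_{\star}(T\bb{C})\subseteq\V$ and the vanishing of $\beta$ on $\V$. For the Lie derivative, apply Cartan and observe that the pushforwards $p_{\star}K_{\bb{C}}=K$, $p_{\star}JK_{\bb{C}}=JK$ combined with the hypotheses $\li_K\beta=-ik\beta$ and $\li_{JK}\beta=k\beta$ exactly cancel the scalar factors produced by $\bar w^{-k}$ under $\li_{K_{\bb{C}}}$ and $\li_{JK_{\bb{C}}}$, so $\tg$ is constant along the $\bb{C}$-factor. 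The $(-k)$-equivariance of the descended $\tg$ on $P$ follows from $S^1$-invariance of $p^{\star}\beta$ combined with the transformation rule $R_z^{\star}\bar w^{-k}=z^k\bar w^{-k}$, producing the desired $\gamma\in\Lambda^q(N,L^k)$ with $\hg=\beta$.

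For (iii), regroup the expansion of $p^{\star}d\hg$ from (i) as
\[
p^{\star}d\hg \;=\; \bar w^k\,\widetilde{d\gamma} \;+\; k\bar w^k\,(\bar w^{-1}d\bar w - i\,\Theta^P)\wedge\tg,
\]
and observe that the combination $\bar w^{-1}d\bar w - i\,\Theta^P$ descends from $P\times\bb{C}^{\times}$ to a well-defined $1$-form on $M^{\times}$, equal to $\frac{2}{\bb{X}}\del z$. Indeed, it annihilates the vertical $K_P-K_{\bb{C}}$ of $P\times\bb{C}\to M$ (using $d\bar w(K_{\bb{C}})=-i\bar w$) and is $S^1$-invariant (the $z$-transformations of $\bar w^{-1}d\bar w$ and $\Theta^P$ cancel). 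Its values on $K,JK\in\V$ are $-i$ and $1$ respectively, while it vanishes on $\H$; this matches $\frac{1}{\bb{X}}((JK)^{\flat}-iK^{\flat})=\frac{2}{\bb{X}}\del z$ by $dz=(JK)^{\flat}$, $Jdz=K^{\flat}$, and $g(K,K)=g(JK,JK)=\bb{X}$. Thus $d\hg = \widehat{d\gamma} + \frac{2k}{\bb{X}}\del z\wedge\hg$; splitting by complex bidegree via $\widetilde{d\gamma}=\widetilde{\del\gamma}+\widetilde{\bdel\gamma}$ and $\del z\in\Lambda^{1,0}\V$ immediately yields $\bdel\hg=\widehat{\bdel\gamma}$ and $\bb{X}\,\del\hg=2k\,\del z\wedge\hg+\bb{X}\widehat{\del\gamma}$. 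The main obstacle is precisely the descent computation identifying $\bar w^{-1}d\bar w-i\,\Theta^P$ with $\frac{2}{\bb{X}}\del z$ on $M^{\times}$; once this geometric step is in place the remaining bookkeeping of bidegrees is routine.
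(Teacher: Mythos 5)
Your argument is correct and follows essentially the same route as the paper: differentiate the defining identity $p^{\star}\hg=\bar w^{k}\tg$ on $P\times\bb{C}$, use $\di\tg=\widetilde{\di\gamma}-ik\,\Theta^{P}\w\tg$, and sort the resulting terms by horizontality and equivariance, with (ii) proved by the same descent argument. The only addition is that you spell out the identification of $\bar w^{-1}\di\bar w-i\,\Theta^{P}$ with $\frac{2}{\bb{X}}\del z$, which the paper leaves implicit in its one-line justification of (iii); note that, exactly like the paper's equation \eqref{lf1}, your differentiation is written only for the case $k\geq 0$.
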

\begin{proof}
(i) to see that $\hg\in\Lambda^{p,q}\H$, notice that the complex structure $J$ on $M$ can, alternatively,  be recovered by projecting down to $M$ the sum of  $J_{\bb{C}}$ and the lift of $J_{N}$ to $\mathcal{H}$. Differentiation in \eqref{master} while assuming $k \geq 1$ leads to
\begin{equation} \label{lf1}p^{\star}(\di\! \hg)=\ol{w}^k\di\! \tg+k\ol{w}^{k-1}\di\! \ol{w} \w \tg=p^{\star}(\widehat{\di\!\gamma})+k\ol{w}^{k-1}
(\di\! \ol{w}-i\ol{w} \Theta^{P})\w \tg,
\end{equation}
by using the standard formula $\di\! \tg=\widetilde{\di\! \gamma}-i\Theta^{P} \w \tg$. Since 
$\di\! \ol{w}-i\ol{w} \Theta^{P}$ in $\Lambda^1(P \times \bb{C})$ is horizontal and vanishes on $\mathcal{H}$ the last part of the claim follows by projection onto $M$. The expressions for the Lie derivatives follow by evaluation of the displayed formula on $-y\del_x+x\del_y, x\del_x+y\del_y$ which we recall are the horizontal lifts of $K,JK$ to $P \times \bb{C}$.\\
(ii) the pull back $\alpha=p^{\star}\beta$ belongs, by construction, to $\Lambda^q\mathcal{H}^P \subseteq \Lambda^q(P \times \bb{C})$. Lifting $K,JK$ horizontally to $P \times \bb{C}$
the set of requirements on $\beta$ becomes 
$\li_{-y\del_x+x\del_y} \alpha=-ik\alpha, \ \li_{x\del_x+y\del_y} \alpha=k\alpha.$  
Then $\overline{w}\li_{\del_x}\alpha=k\alpha, \  \overline{w}\li_{\del_y}\alpha=-ik\alpha$ showing that $\frac{1}{\overline{w}^k}\alpha$ is constant in direction of $\bb{C}$ for 
$w \neq 0$. By continuity $\alpha=\ol{w}^k\,\sigma$ where $\sigma$ is in $\Lambda^qP$. Since $\alpha$ is invariant w.r.t to the circle action on $P \times \bb{C}$, we have $R^{*}_{z}\,\sigma=z^{-k}\,\sigma, z\in \bb{S}^{1}$. Tautologically $\sigma=\tg $ with $\gamma$ in $\Lambda^q(N,L^k)$ and the claim is proved.\\
(iii) follows from the definition of the lifting map and \eqref{lf1}.
\end{proof}
It is well known that letting $I:=-J_{\vert \V}+J_{\H}$ yields an integrable almost complex structure, orthogonal w.r.t. $g$. The Chern connection 
$\nabla^c=\nabla+\frac{1}{2}(\nabla I)I$ of $(g,I)$ turns out to be the projection of the Levi-Civita connection of $g$ onto the splitting $TM=\V
\oplus \H$(see \cite{NaOr} for details and more general results). We have 
\begin{equation} \label{eqn-c}
\nabla^c_{X^{\H}}Y^{\H}=(\nabla^{g_N}_XY)^{\H}
\end{equation} for all $X,Y$ in $TN$, where $\nabla^{g_N}$ is the Levi-Civita connection of $g_N$ and where $X^{\H}$ is the horizontal lift of $X$ to $\H$. Then 
\begin{equation} \label{rel-connex} 
\nabla^c_{X^{\H}}\widehat{\alpha}=\widehat{\nabla_X \alpha}
\end{equation}
where $X$ is in $TN$, $\alpha$ is a section of $\Omega^{p,q}(N,L^{k})$ and $\nabla$ is the coupled connection therein. Key to solving differential equations on $M$ is comparing 
differential operators on $M$ in terms of their horizontal and vertical counterparts.
\begin{lema} \label{nsplit-f}
The following hold
\begin{itemize}
\item[(i)] we have 
\begin{equation} \label{comp-H1}
\nabla_X\tau=\nabla^c_X\tau-\frac{i}{z}K^{01} \w (X \lrcorner \, \tau)
\end{equation} 
whenever $(X, \tau) \in \H \times \Lambda^{0,p}\H$ 
\item[(ii)] as well as 
\begin{equation} \label{liena}
\li_{K_{01}}=\nabla_{K_{01}}+\frac{ip\bb{X}}{2z}
\end{equation}
on $\Lambda^{0,p}\H$.
\end{itemize}
\end{lema}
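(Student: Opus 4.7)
My plan is to derive both identities from the Killing equation \eqref{der-K} for $K$, together with the fact that the Chern connection $\nabla^c$ preserves the splitting $TM = \V \oplus \H$. The inputs I expect to use repeatedly are: from \eqref{der-K} and skew-symmetry of $\nabla K$ for the Killing field $K$, one obtains $(\nabla_X K)^{\H} = -\tfrac{\bb{X}}{2z}JX$, $(\nabla_X K)^{\V}=0$ for $X\in\H$, and $(\nabla_X K)^{\V} = -\tfrac{\bb{X}'}{2}JX$, $(\nabla_X K)^{\H}=0$ for $X\in\V$; together with the values $K^{01}(K)=\tfrac{\bb{X}}{2}$ and $K^{01}(JK)=-\tfrac{i\bb{X}}{2}$ and the standard identity $\tau(JX,Y_2,\dots,Y_p)=-i\tau(X,Y_2,\dots,Y_p)$ valid for $\tau\in\Lambda^{0,p}M$.

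For (i), the point is that $\nabla^c$ preserves the subbundle $\H$ together with its annihilator, so viewing $\tau$ as a form on $M$ that vanishes when contracted with any vertical vector, the form $\nabla^c_X\tau$ shares this vanishing property. Consequently $\nabla_X\tau-\nabla^c_X\tau$ is zero on all-horizontal inputs and is determined entirely by its values on tuples with one vertical entry. Evaluating on $(V,Y_2,\dots,Y_p)$ with $V\in\V$ and $Y_j\in\H$ and using that $\tau$ kills vertical vectors in the remaining slots gives
\begin{equation*}
(\nabla_X\tau-\nabla^c_X\tau)(V,Y_2,\dots,Y_p) \;=\; -\tau\bigl((\nabla_X V)^{\H},Y_2,\dots,Y_p\bigr).
\end{equation*}
The right hand side of (i) evaluated on the same tuple reduces, by vanishing of $K^{01}$ on $\H$, to $-\tfrac{i}{z}K^{01}(V)\tau(X,Y_2,\dots,Y_p)$. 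Since $\V$ is spanned by $K$ and $JK$, it suffices to match the two expressions for $V=K$ and $V=JK$; in each case the horizontal part of $\nabla_X V$ is computed from the formulas above, and the resulting identity is a direct check using $\tau(JX,\cdot)=-i\tau(X,\cdot)$.

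For (ii), the standard formula $(\li_Z-\nabla_Z)\alpha(X_1,\dots,X_p)=\sum_i\alpha(X_1,\dots,\nabla_{X_i}Z,\dots,X_p)$, extended complex-linearly to $Z=K_{01}=\tfrac12(K+iJK)$, reduces the problem to computing $\nabla_{X_i}K_{01}$ for $X_i\in\H$ (inputs in $\V$ annihilate $\alpha$). Since $J$ is parallel, $\nabla_X K_{01}=(\nabla_X K)_{01}$, and combining the horizontal Killing identity $\nabla_X K=-\tfrac{\bb{X}}{2z}JX$ with a brief type-computation gives $(\nabla_X K)_{01}=\tfrac{i\bb{X}}{2z}X_{01}$. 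Finally, since $\alpha\in\Lambda^{0,p}\H$ only sees $(0,1)$-components, $\alpha(\dots,X_{01},\dots)=\alpha(\dots,X,\dots)$, so the sum collapses to $\tfrac{ip\bb{X}}{2z}\alpha$.

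The routine part is the algebra; the mildly delicate step is (i), where I must keep straight that $\nabla^c_X\tau$ genuinely vanishes on any tuple containing a vertical vector (otherwise the equality would have further contributions). This hinges on $\nabla^c$ preserving $\H$, which is exactly the content of \eqref{eqn-c}; once this is recorded, (i) is a clean consequence of \eqref{der-K} and the complex-type identities for $(0,p)$-forms. No deeper curvature input or prolongation is needed.
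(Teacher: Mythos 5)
Your proof is correct and follows essentially the same route as the paper: part (i) is the paper's computation $K \lrcorner \, \nabla_X\tau = -\nabla_XK \lrcorner \, \tau$ from \eqref{der-K}, recast as an evaluation against $V=K$ and $V=JK$, with the role of $\nabla^c$ preserving $\Lambda^{0,p}\H$ (which the paper leaves implicit in ``the claim follows easily'') made explicit; part (ii) is exactly the paper's use of the comparison formula \eqref{lncom} together with $\nabla_XK_{01}=\tfrac{i\bb{X}}{2z}X_{01}$ for horizontal $X$. The sign and type computations ($K^{01}(K)=\tfrac{\bb{X}}{2}$, $K^{01}(JK)=-\tfrac{i\bb{X}}{2}$, $JX\lrcorner\,\tau=-iX\lrcorner\,\tau$) all check out.
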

\begin{proof}
By \eqref{der-K} we have $K \lrcorner \, \nabla_X\tau=-\nabla_XK \lrcorner \, \tau=-\frac{i}{2z}\bb{X}(z)X \lrcorner \, \tau$ and the claim follows easily. The claim in \eqref{liena} follows by using again \eqref{der-K} in the comparaison formula \eqref{lncom}.
\end{proof}
Indicating with $\bdel_{\H}$ the component on $\Lambda^{0,\star}\H$ of $\bdel$ acting on $\Lambda^{0,\star}\H$ a straightforward argument leads to 
\begin{equation} \label{bdelhh}
\bdel=\bdel_{\H}+\frac{2}{\bb{X}} K^{01} \w \li_{K_{01}} \ \mbox{on} \ \Lambda^{0,\star}\H.
\end{equation}
If $E$ is a holomorphic line bundle over $N$ we denote with  
\begin{equation*}
\HK^{0,p}(N,E):=\{\tau \in \Lambda^{0,p}(N,E):\nabla^{01}\tau=\frac{1}{p+1}\bdel \tau\}
\end{equation*}
the space of $E$-valued Hermitian Killing $(0,p)$-forms, $p \geq 1$. When $p=0$ we define 
$\HK^{0,0}(N,E):=\{s \in \Gamma(E): \bdel s \in \HK^{0,1}(N,E)\}$. We denote with $L^k,k \in \bb{Z}$ the $k$-th tensor power of $L$.
\begin{teo} \label{ex-calnew}
We have an isomorphism 
\begin{equation*}
\bigoplus_{k \in \bb{Z}}\biggl [ \HK^{0,p-1}(N,L^k) \oplus \HK^{0,p}(N,L^k) \biggr ] \to \HK^{0,p}(M,g), \ (\tau_1, \tau_2) \mapsto \bdel(z^p\widehat{\tau_1})+z^{p+1} \widehat{\tau_2}
\end{equation*}
whenever $p \geq 1$. 
\end{teo}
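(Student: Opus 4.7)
The plan is to show that $\Phi(\tau_1,\tau_2):=\bdel(z^p\hat\tau_1)+z^{p+1}\hat\tau_2$ is a linear isomorphism in three stages. Observe first that $\di z=(JK)^\flat$ together with the conventions of Section~\ref{K-prel} gives $\bdel z=iK^{0,1}$, so by Proposition~\ref{lifty}(iii),
$$\Phi(\tau_1,\tau_2)=ipz^{p-1}K^{0,1}\w\hat\tau_1+z^p\widehat{\bdel\tau_1}+z^{p+1}\hat\tau_2,$$
with the first summand purely vertical in $K^{0,1}\w\Lambda^{0,p-1}\H$ and the last two horizontal in $\Lambda^{0,p}\H$. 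This decomposition is the organizing principle for all three steps.

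For well-definedness, the Hermitian Killing equation $\nabla_X\tau=X_{1,0}\lrcorner\del\tau+\frac{1}{p+1}X_{0,1}\lrcorner\bdel\tau$ is checked in both $\H$ and $\V$. For $X=X_N^\H$ horizontal, Lemma~\ref{nsplit-f}(i) rewrites $\nabla_X$ in terms of $\nabla^c_X$; using $X(z)=0$, \eqref{rel-connex} and \eqref{del-lift}, the base-level HK equations for $\tau_1\in\HK^{0,p-1}(N,L^k)$ and $\tau_2\in\HK^{0,p}(N,L^k)$ produce the horizontal part of HK on $M$, and the $K^{0,1}$-corrections in Lemma~\ref{nsplit-f}(i) account for the vertical part. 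For $X\in\V$ the argument reduces to $X=K_{0,1}$; \eqref{liena} combined with $\li_{K_{0,1}}\hat\gamma=0$ (a consequence of Proposition~\ref{lifty}(i), since $\li_K\hat\gamma=-ik\hat\gamma$ and $\li_{JK}\hat\gamma=k\hat\gamma$ cancel) together with \eqref{der-K} reduces this direction to an algebraic identity.

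For injectivity, $K$ being holomorphic Killing ensures $\li_K$ preserves $\HK^{0,p}(M,g)$; since $\li_Kz=0$, $\li_KK^{0,1}=0$, and $\li_K\hat\gamma=-ik\hat\gamma$, the $L^k$-summand maps into the $(-ik)$-eigenspace of $\li_K$, so the sum over $k$ is direct. Within each eigenspace the vertical part $ipz^{p-1}K^{0,1}\w\hat\tau_1$ determines $\tau_1$ uniquely via Proposition~\ref{lifty}(ii), after which the horizontal remainder determines $\tau_2$. Surjectivity is the crux: given $\tau\in\HK^{0,p}(M,g)$, Fourier-decompose along the $\mathbb{S}^1$-action of $K$ into modes $\tau^{(k)}$, each of which remains Hermitian Killing by equivariance, and split $\tau^{(k)}=\beta+K^{0,1}\w\sigma$ with $\beta\in\Lambda^{0,p}\H$ and $\sigma\in\Lambda^{0,p-1}\H$. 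Plugging $X=JK$ into HK and using \eqref{liena} and \eqref{der-K} forces $\li_{JK}\sigma=(k+(p-1)\bb{X}/z)\sigma$, which is exactly the equivariance needed so that $\sigma=ipz^{p-1}\hat\tau_1$ for some $\hat\tau_1\in\Lambda^{0,p-1}\H$ satisfying the hypotheses of Proposition~\ref{lifty}(ii) and hence coming from a unique $\tau_1\in\Lambda^{0,p-1}(N,L^k)$. Subtracting $\bdel(z^p\hat\tau_1)$ leaves an equivariant horizontal form of the shape $z^{p+1}\hat\tau_2$, and reversing the Step~1 computation forces $\tau_1\in\HK^{0,p-1}(N,L^k)$ and $\tau_2\in\HK^{0,p}(N,L^k)$.

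The main obstacle is extracting the correct $\li_{JK}$-equivariance of $\sigma$ in the surjectivity argument. Because $JK$ is not a Killing field, $\li_{JK}$ does not commute neatly with the algebraic operations appearing in the HK equation, and the $JK$-component of HK entangles $\bb{X}$-dependent contributions from $\beta$ with the $\sigma$-term one wants to isolate. Disentangling these using Lemma~\ref{nsplit-f}(ii) and the precise formula \eqref{der-K} for $\nabla K$ is where the calculation demands the most care.
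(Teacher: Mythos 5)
Your proposal is correct and follows essentially the same route as the paper's proof: Fourier decomposition under the circle action generated by $K$, the splitting of $\tau$ into a $K^{0,1}\w\Lambda^{0,p-1}\H$ part and a $\Lambda^{0,p}\H$ part, conversion of the vertical component of the Hermitian Killing equation into $\li_{K_{01}}$-equivariance via \eqref{liena} and \eqref{der-K}, and descent to $N$ through Proposition \ref{lifty}. The only organizational difference is that you separate well-definedness, injectivity and surjectivity into three stages, whereas the paper obtains all three at once by running the surjectivity computation as a chain of equivalences.
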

\begin{proof}
Pick $\tau \in \HK^{0,p}(M,g)$; since the operator $\nabla^{01}-\frac{1}{p+1}\bdel$ is invariant under $\li_K$ after expanding $\tau$ as a Fourier series we may further assume that 
$\li_K \tau=ik \tau $ with $k \in \bb{Z}$. Write $\tau=K^{0,1} \w \tau_1+\tau_2$ off the zero set of $K$. Then 
\begin{equation*}
\bdel \tau=K^{01} \w \bdel \tau_1+\bdel \tau_2=-K^{01} \w \bdel_{\H}\tau_1+\bdel_{\H}\tau_2+\frac{2}{\bb{X}}K^{01} \w \li_{K_{01}}\!\tau_2
\end{equation*}
after expanding $\bdel$ according to \eqref{bdelhh}. Since $K$ is holomorphic we have $\nabla^{01}K^{01}=0$ 
thus the Hermitian Killing equation yields 
\begin{equation*}
K^{01} \w \nabla_{K_{01}}\tau_1+\nabla_{K_{01}}\tau_2=\frac{1}{p+1}K_{01} \lrcorner \bdel \tau=\frac{1}{p+1}(-\frac{\bb{X}}{2}\bdel_{\H}\tau_1+\li_{K_{01}}\!\tau_2).
\end{equation*}
The components of the displayed equation on $K^{01} \w \Lambda^{0,p-1}\H$ respectively $\Lambda^{0,p}\H$ thus read 
\begin{equation*}
\nabla_{K_{01}}\tau_1=0, \ \nabla_{K_{01}}\tau_2=\frac{1}{p+1}(-\frac{\bb{X}}{2}\bdel_{\H}\tau_1+\li_{K_{01}}\!\tau_2).
\end{equation*}
In terms of the Lie derivative, using \eqref{liena} leads to 
\begin{equation*}
\li_{K_{01}}\! \tau_1=\frac{i(p-1)\bb{X}}{2z} \tau_1, \ \li_{K_{01}}\! \tau_2-\frac{i(p+1)\bb{X}}{2z} \tau_2=-\frac{\bb{X}}{2p}\bdel_{\H}\tau_1.
\end{equation*}
It follows easily that $\li_{K_{01}}\!(z^{-(p-1)}\tau_1)=0$ thus $\tau_1=z^{p-1}\widehat{\alpha_1}$ with $\alpha_1 \in \Lambda^{0,p-1}(N,L^k)$, in particular $\tau_1$ is globally defined and $\bdel_{\H}\tau_1=z^{p-1}\widehat{\bdel \alpha_1}$. Having $\tau_1$ thus parametrised leads easily to 
$\tau_2=-\frac{i}{p}z^p\widehat{\bdel \alpha_1}+z^{p+1}\widehat{\alpha_2}$ with $\alpha_2 \in \Lambda^{0,p-1}(N,L^k)$. In other words 
\begin{equation*}
\tau=-\frac{i}{p}\bdel(z^p \widehat{\alpha_1})+z^{p+1}\widehat{\alpha_2}.
\end{equation*}
There remains to examine the Hermitian Killing equation on $\tau$ in direction of $\H$. Since $\bdel \tau=z^{p+1}\widehat{\bdel \alpha_2}+\bdel(z^{p+1}) 
\w \widehat{\alpha_2}$ the latter reads 
\begin{equation*}
K^{01} \w \nabla^{01}_X \tau_1+\nabla^{01}_X \tau_2=\frac{1}{p+1}z^{p+1} X \lrcorner \widehat{\bdel \alpha_2}-\frac{1}{p+1} \bdel(z^{p+1})\w (X \lrcorner \widehat{\alpha_2}) , \ X \in \H
\end{equation*}
after also taking into account that $\nabla^{01}K^{01}=0$. Identifying components whilst simultaneously using Lemma \ref{nsplit-f} yields 
\begin{equation*}
\nabla_{X_{01}}^c\tau_2=\frac{z^{p+1}}{p+1}X \lrcorner \widehat{\bdel \alpha_2}, \ \nabla_{X_{01}}^c\tau_1=\frac{i}{z}X \lrcorner (\tau_2-z^{p+1}\widehat{\alpha_2}).
\end{equation*}
Projected down on $N$ via Lemma \ref{nsplit-f} the second equation reads $\nabla^{01}\alpha_1=\frac{1}{p}\bdel \alpha_1$. Similarly the first has two 
components with coefficients $z^p$ and $z^{p+1}$ thus we arrive at $\nabla^{01}(\bdel \alpha_1)=0$ and $\alpha_2 \in \HK^{0,p}(N,L^k)$. When $p=1$ this simply states that $\alpha_1 \in \HK^{0,0}(N,L^k)$. When $p \geq 2$ we know by Proposition \ref{prol-1},(i) that $\bdel \alpha_1 \in \HK^{0,p}(N,L^k)$ and the proof is complete.
\end{proof}
At this stage a few remarks are in order. If $E$ is a holomorphic line bundle over $N$ we denote with $H^{0,p}(N,E)$ the space 
of $E$-valued holomorphic forms of type $(0,p)$ and let $K_N:=\Lambda^{0,m-1}N$. 
\begin{rema} \label{complete}
\begin{itemize}
\item[(i)]
Theorem \ref{ex-calnew} provides many examples of K\"ahler structures carrying Hermitian Killing forms. Via the identification $\HK^{0,m-1}(N,L^k)=H^0(N,K_N \otimes L^k)$ it provides an injection $\bigoplus_{k \in \bb{Z}}H^0(N,K_N \otimes L^k) \to \HK^{0,m-1}(M,g)$ and also proves Theorem \ref{main3} in the introduction.
\item[(ii)]whilst the examples above are not compact, no constraint on the momentum profile is imposed; if the latter is choosed to grow at most quadratically \cite{Hw-S} the metrics under consideration are complete.
\end{itemize}
\end{rema}
As far as co-closed Hermitian Killing forms are concerned we can make the following 
\begin{pro} \label{ex-cal} 
Let $\tau\in \HK^{0,m-1}(M,g) \cap \ker \di^{\star}$ be such that $\tau_{\vert \V}=0$. Then, assuming that $m \geq 3$
\begin{itemize}
\item[(i)] $\rho^N=k\omega_N$ where $k \in \mathbb{Z}$(and hence $K_N  \cong L^{-k}$)
\item[(ii)] up to multiplication by a constant $\tau=z^m \widehat{id}$ where  
$id \in \Lambda^{0,m-1}(N,L^{-k})$ is induced by the vector bundle isomorphism above.
\end{itemize}
\end{pro}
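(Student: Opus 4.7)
The plan is to apply Theorem \ref{ex-calnew} to parametrise $\tau$ explicitly, then to exploit the coclosed condition together with Theorem \ref{m-1-new} to pin down both the base geometry and the surviving Fourier mode.

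Since $\tau_{|\V}=0$ means $V\lrcorner\tau=0$ for every $V\in\V$, equivalently $\tau\in\Lambda^{0,m-1}\H$, the $K^{0,1}\w\widehat{\alpha_1}$-summand of every Fourier mode in Theorem \ref{ex-calnew} must vanish, leaving $\tau=\sum_{k'\in\mathbb{Z}}z^m\widehat{\alpha_{k'}}$ with $\alpha_{k'}\in\mathfrak{h}^{0,m-1}(N,L^{k'})$. For the Einstein condition on $N$, combine Theorem \ref{m-1-new} with the formula $\rho=\pi^{\star}\rho^N+\Lambda_1\omega^\V+\Lambda_2\omega^\H$ recalled in the proof of Proposition \ref{loc-descr}(iv). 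The two-eigenvalue constraint on $\Ric$ forces $\pi^{\star}\rho^N$ to be proportional to $\omega^\H=z\pi^{\star}\omega_N$, so $\rho^N=f\omega_N$ for some $f\colon N\to\mathbb{R}$; since $\dim_{\mathbb{C}}N=m-1\geq 2$, the contracted second Bianchi identity forces $f$ to be a constant $k$. Chern--Weil then yields $c_1(K_N)=-k\,c_1(L)=c_1(L^{-k})$, proving (i).

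For the coclosed condition, note that $d^{\star}\tau=\bdel^{\star}\tau$ since $\del^{\star}\tau\in\Lambda^{-1,m-1}M=0$. Using the K\"ahler identity $\bdel^{\star}=-i[L^{\star}_\omega,\del]$, the relation $\del z=-iK^{1,0}$, Proposition \ref{lifty}(iii), together with the easy checks $L^{\star}_\omega(K^{1,0}\w\widehat\alpha)=0$ (by type) and $L^{\star}_{\omega^\H}\widehat\gamma=z^{-1}\widehat{L^{\star}_{\omega_N}\gamma}$ (from the conformal rescaling $g_{|\H}=z\pi^{\star}g_N$), a direct computation yields, for every Fourier mode,
\begin{equation*}
\bdel^{\star}\tau=z^{m-1}\widehat{\bdel_N^{\star}\alpha_{k'}}.
\end{equation*}
Hence $d^{\star}\tau=0$ is equivalent to $\bdel_N^{\star}\alpha_{k'}=0$ on $N$ for every $k'$. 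Combining this with $\nabla^{0,1}\alpha_{k'}=0$ and $\bdel_N\alpha_{k'}=0$ (automatic in top antiholomorphic degree), the Weitzenb\"ock formula \eqref{wbf} coupled to $L^{k'}$, with $\rho^N=k\omega_N$ and $R^D=i\omega_N$, produces a scalar curvature term acting on $\alpha_{k'}$ that vanishes precisely when $k'=-k$. Thus only the mode $k'=-k$ survives.

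Finally, the isomorphism $K_N\cong L^{-k}$ together with the smooth pairing $\Lambda^{0,m-1}T^{\star}N\otimes K_N\to\Lambda^{m-1,m-1}T^{\star}N$ trivialised by $\omega_N^{m-1}/(m-1)!$ yields a smooth trivialisation of $\Lambda^{0,m-1}T^{\star}N\otimes L^{-k}\cong\mathcal{O}_N$, and $id$ is its canonical section. Parallel sections of a trivialised line bundle over a connected $N$ are constant, hence $\alpha_{-k}=c\cdot id$ and $\tau=c\,z^m\widehat{id}$ as claimed in (ii). The main obstacle in executing this plan is the Weitzenb\"ock bookkeeping: the curvature identity on $\Lambda^{0,m-1}(N,L^{k'})$ must be derived with the correct sign conventions so as to isolate precisely the mode $k'=-k$.
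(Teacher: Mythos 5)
Your reduction of $\tau$ to $\sum_{k'}z^{m}\widehat{\alpha_{k'}}$ with $\alpha_{k'}\in\mathfrak{h}^{0,m-1}(N,L^{k'})$, and your computation that $\di^{\star}\tau=0$ is equivalent to $\bdel_N^{\star}\alpha_{k'}=0$ for every mode, are both sound and agree with the paper up to packaging. The gap is in what you do next with $\bdel_N^{\star}\alpha_{k'}=0$. The paper's central observation is that this condition, in top antiholomorphic degree over the $(m-1)$-dimensional base, is equivalent to $\del_N\alpha_{k'}=0$: by the K\"ahler identity one has $\bdel_N^{\star}\alpha_{k'}=-iL^{\star}_{\omega_N}\del_N\alpha_{k'}$, and $L^{\star}_{\omega_N}$ is injective on $\Lambda^{1,m-1}N$ because there are no primitive $(1,m-1)$-forms in complex dimension $m-1$. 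Hence each nonzero $\alpha_{k'}$ is fully $\nabla$-parallel as a section of $K_N\otimes L^{k'}$; this at once forces that line bundle to be flat, i.e.\ $\rho^N=k'\omega_N$, which can hold for at most one integer $k'=-k$ (so all other modes die with no Weitzenb\"ock computation), produces the isomorphism $K_N\cong L^{-k}$ together with its parallel trivialising section $id$, and gives $\alpha_{-k}=c\cdot id$. You never make this deduction: at the end you invoke ``parallel sections of a trivialised line bundle are constant'' without having proved $\nabla^{1,0}\alpha_{-k}=0$. Without parallelism, $\alpha_{-k}$ is merely a holomorphic, coclosed section of $K_N\otimes L^{-k}$; the space of such sections can have dimension greater than one (and is typically infinite-dimensional for non-compact $N$), so part (ii) does not follow from what you have written.

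Two further soft spots concern part (i). First, your Chern--Weil step only equates first Chern classes: it needs $N$ compact, it does not produce a bundle isomorphism (torsion in $H^2(N,\bb{Z})$), and above all it does not deliver the flat trivialisation that $id$ must be for (ii) to make sense. Second, the eigenvalue argument plus Bianchi only yields $\rho^N=f\omega_N$ for a \emph{real} constant $f$; nothing at that stage forces $f\in\bb{Z}$, and integrality only emerges a posteriori from the requirement that some Fourier mode $k'=-f$ survive. Your Weitzenb\"ock mechanism for isolating $k'=-k$ is workable in principle --- the curvature term is the constant $(m-1)(k+k')$ while $(\nabla^{0,1})^{\star}\nabla^{0,1}\alpha_{k'}=0$, so the conclusion is pointwise and needs no integration --- but it is sign-sensitive, as you concede, and becomes superfluous once the parallelism of each mode is established as above.
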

\begin{proof}
From the K\"ahler identities having $\di^{\star}\!\tau=0$ is equivalent with $L^{\star}_{\omega}(\del \tau)=0$ that is $L^{\star}_{\omega}(\del_{\H}\tau)=0$. Since 
$L_{\omega}:\Lambda^{0,m-2}\H \to \Lambda^{1,m-1}\H$ is an isomorphism it follows that $\del_{\H}\tau=0$. This allows writing $\tau=z^{m}\tau_0$ where 
$\tau_0 \in \Lambda^{0,m-1}\H$ satisfies $\di_{\H}\tau_0=0$ and $\li_{K_{01}}\! \tau_0=0$. Decomposing $\tau_0$ as a Fourier series, or locally as 
as holomorphic series in $w \in \bb{C}$, the coefficients $\tau_0^k=\widehat{\gamma_k}$ where $\gamma_k \in \Lambda^{0,m-1}(N,L^{-k}),k \in \bb{Z}$. Thus $\nabla \gamma_k=0$ according to Theorem \ref{ex-calnew}. If $\gamma_k$ is not zero it provides an isomorphism between $K_N$ and $L^{-k}$ hence $k$ is uniquely determined from $\rho^N=k\omega_N$ and both claims are proved.
\end{proof}

$\\$
{\bf{End of Proof of Theorem \ref{main1}}}\\
Proposition \ref{loc-descr} fully describes the local geometry of $(g,J)$ as well as the expression for $\varphi$ in terms of $\tau$. To complete the argument there remains to describe $\tau$, which by Proposition \ref{pro1} is in the space $\HK^{0,m-1}(M,g) \cap \ker \di^*$.
Since the arguments in the proof of Proposition \ref{ex-cal},(ii), are purely local we still have  $\tau=z^m \widehat{id}$.\\

We conclude this section by establishing the maximal domain of definition of the solutions $(\varphi,\tau)$ to equation \eqref{special-i} constructed in Proposition \ref{loc-descr}. Clearly the form $\tau=z^m \widehat{id}$ is defined on the whole of $M=L$. 
\begin{pro}\label{ex-extn} 
Let $M=K_N^{-\frac{1}{k}}$ with $k \in \bb{Z}^{\times}$. Consider the metric $g$ with momentum profile $\bb{X}=z(C_1z^m+\frac{2k}{m}),C_1 \in \bb{R}$ together with $\varphi=\frac{z^{m+1}}{\bb{X}(z)}\del z \w \widehat{id}$. The maximal domain of definition for the pair $(g,\varphi)$ is 
\begin{itemize}
\item[(i)] $M^{\times}$ if $k >0>C_1$
\item[(ii)]  $\{r >a\}$ with $a=(C_1\lambda)^{-\frac{1}{2k}}$ if $C_1>0>k$ 
\item[(iii)]  $\{0 <r <a\}$ with $a=(C_1\lambda)^{-\frac{1}{2k}}$ if $C_1,k >0$ 
\item[(iv)] $M^{\times}$ if $C_1=0$.
\end{itemize}
\end{pro}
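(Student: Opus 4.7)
My plan is to reduce the proposition to a fiberwise analysis of the relation between the moment map $z$ and the Hermitian radial coordinate $r = |w|$ on $L = K_N^{-1/k}$. The pair $(g, \varphi)$ is well-defined precisely where $z > 0$ and $\mathbb{X}(z) > 0$: the former is standing in the Calabi setup, the latter ensures positive-definiteness of $g$ and non-vanishing of the denominator in $\varphi = \frac{z^{m+1}}{\mathbb{X}(z)}\partial z \wedge \widehat{id}$. Translating this into a condition on $r$ requires an explicit formula $z = z(r)$.

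The crucial input is the fibre ODE. On each fibre of $M \to N$, the vertical form $\omega^{\V}$ restricts to a rotationally invariant K\"ahler form $\omega_h$ on $\mathbb{C}$, and combining $K \lrcorner \omega_h = dz$, $|K|^2 = \mathbb{X}(z)$, with the convention $J_{\mathbb{C}}\partial_x = -\partial_y$, yields the separable ODE
\begin{equation*}
\frac{dz}{\mathbb{X}(z)} = \frac{dr}{r}.
\end{equation*}
Partial fractions applied to $\mathbb{X}(z) = z(C_1 z^m + 2k/m)$ (for $C_1 \neq 0$) give, after elementary integration,
\begin{equation*}
\frac{z^m}{C_1 z^m + 2k/m} = \lambda r^{2k}
\end{equation*}
for an integration constant $\lambda > 0$, equivalently
\begin{equation*}
z^m = \frac{(2k/m)\lambda r^{2k}}{1 - C_1 \lambda r^{2k}}.
\end{equation*}
When $C_1 = 0$, direct integration yields $z \propto r^{2k/m}$, defined throughout $M^{\times}$ precisely when $k > 0$, giving case (iv).

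With the explicit formula in hand, the remainder is a routine case distinction based on the signs of $C_1$ and $k$. In each case one first identifies the $z$-interval on which $\mathbb{X}(z) > 0$, then tracks the corresponding $r$-range via the formula above. For case (iii) with $C_1, k > 0$, $\mathbb{X}(z) > 0$ for every $z > 0$ and the map $z \mapsto z^m/(C_1 z^m + 2k/m)$ is a bijection from $(0, \infty)$ onto $(0, 1/C_1)$, whence $r \in (0, a)$ with $a = (C_1\lambda)^{-1/(2k)}$. Case (i) with $k > 0 > C_1$ has $\mathbb{X}$ positive on the bounded interval $(0, z_*)$ where $z_*^m = -2k/(mC_1)$, but the corresponding $r$-range is all of $(0, \infty)$, giving $M^{\times}$. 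Case (ii) with $C_1 > 0 > k$ is the symmetric situation, in which the zero of $\mathbb{X}$ at $z_*$ and the limit $z \to \infty$ together pin down the boundary $r = a$.

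The principal obstacle is extracting the correct ODE in the first place; this requires careful sign bookkeeping for the complex structure on $\mathbb{C}$ and the identification of $r = |w|$ with the Hermitian norm on $L$ via the realisation $M = P \times_{\mathbb{S}^1} \mathbb{C}$. Once that ODE is in place, the partial-fraction integration and the case analysis are mechanical.
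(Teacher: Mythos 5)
Your proposal coincides with the paper's own argument: the paper likewise reduces to the fibre, writes $\omega_h=-(\di\, J_{\bb{C}}\di)F(r)$ with moment map $z=G(r)$, derives the ODE $rG'(r)=\bb{X}(G(r))=G(C_1G^m+\frac{2k}{m})$, and integrates it to exactly your formula $G^m(r)=\frac{2k\lambda r^{2k}}{m(1-C_1\lambda r^{2k})}$ with $\lambda>0$, after which the domains are read off from the positivity conditions $G>0$ and $G'>0$ (equivalently $\bb{X}(G)>0$). Your cases (i), (iii) and (iv) are carried out correctly and in more detail than the paper, which simply asserts that all statements follow from the displayed formula.

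The one place you cannot get away with hand-waving is case (ii), which you dismiss as ``the symmetric situation''. It is not: for $C_1>0>k$ the numerator $(2k/m)\lambda r^{2k}$ of your expression for $z^m$ is negative, so positivity forces $1-C_1\lambda r^{2k}<0$, i.e. $r^{2k}>(C_1\lambda)^{-1}$; since $2k<0$, raising to the power $1/(2k)$ reverses the inequality and yields $r<(C_1\lambda)^{-1/(2k)}=a$, which is the \emph{same} side of $a$ as in case (iii), not the opposite one, and conflicts with the stated answer $\{r>a\}$. A concrete check: $m=3$, $k=-1$, $C_1=\lambda=1$ gives $G^3=\frac{-2}{3(r^2-1)}$, positive precisely for $r<1=a$. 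So either there is a sign convention you have not pinned down (the weight $-1$ of the circle action on the fibre coordinate $w$ in $P\times_{\bb{S}^1}\bb{C}$, or the orientation entering $J_{\bb{C}}\partial_x=-\partial_y$) that effectively inverts $r$, or the statement of case (ii) needs correcting; in either event ``symmetric'' is not a proof here, and this is precisely the step you yourself flagged as requiring careful sign bookkeeping, so the bookkeeping must actually be done.
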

\begin{proof} 
To deal with $g$ and $\varphi$ we recall a few facts concerning the metrics $\omega_h$ on $\bb{C}$. We have $\omega_h=-(\di\! J_{\bb{C}}\di \!)F(r)$ where $r^2=x^2+y^2$. The moment map is given by $z_h=G(r)$ with $G(r)=rf^{\prime}(r)$ thus the positivity conditions are $G,G^{\prime}>0$. Thus 
$h(K_{\bb{C}},K_{\bb{C}})=rG^{\prime}(r)$; the desired momentum profile corresponds to $G$ solving the ODE $rG^{\prime}(r)=G(r)(C_1G^m+\frac{2k}{m})$. The solutions are 
\begin{equation*}
G^m(r)=\frac{2k\lambda r^{2k}}{m(1-C_1\lambda r^{2k})}
\end{equation*}
where $\lambda>0$. Also record that 
\begin{equation*}
\frac{z^{m+1}}{\bb{X}(z)}\del z=G^{m+1}(r)\del \ln r.
\end{equation*}
All statements follow now easily from these facts. Note that the maximal domain for $\varphi$ alone can be larger than that for $g$ and also that 
$g$ is incomplete for $m \geq 3$ since then $\bb{X}$ does not grow quadratically \cite{Hw-S}.
\end{proof}

\end{document}